\documentclass[english]{amsart}
\usepackage[T1]{fontenc}
\usepackage[latin9]{inputenc}
\usepackage{amsmath}
\usepackage{amssymb}
\usepackage{amsmath}
\usepackage{amstext}
\usepackage{amsfonts}
\usepackage{amscd}
\usepackage{amsbsy}
\usepackage{amsthm}
\usepackage{ifthen}
\usepackage{enumerate}
\usepackage{latexsym}
\usepackage{babel}
\usepackage{xy}
\usepackage{graphics}
\xyoption{all}

\makeatletter

\makeatletter
\def\input@path{{/home/filiuspelei/Hope//}}
\makeatother

\newcounter{dummy} \numberwithin{dummy}{section}
\newtheorem*{theorem*}{Theorem}

\newtheorem{theorem}[dummy]{Theorem}
\newtheorem{prop}[dummy]{Proposition}
\newtheorem{lemma}[dummy]{Lemma}

\newtheorem{corollary}[dummy]{Corollary}
\newtheorem{conjecture}[dummy]{Conjecture}

\theoremstyle{definition}

\newtheorem{definition}[dummy]{Definition}
\newtheorem{example}[dummy]{Example}

\newtheorem{remark}[dummy]{Remark}

\providecommand{\tr}{\mathop{\rm Tr}\nolimits}
\providecommand{\im}{\mathop{\rm Im}\nolimits}
\providecommand{\h}{\mathop{\rm ht}\nolimits}
\providecommand{\coker}{\mathop{\rm coker}\nolimits}

\providecommand{\spec}{\mathop{\rm spec}\nolimits}

\providecommand{\hm}{\mathop{\rm Hom}\nolimits}
\providecommand{\ext}{\mathop{\rm Ext}\nolimits}
\providecommand{\tor}{\mathop{\rm Tor}\nolimits}
\providecommand{\mor}{\mathop{\rm Morph}\nolimits}

\providecommand{\md}{\mathop{\rm mod}\nolimits}
\providecommand{\id}{\mathop{\rm \textbf{id}}\nolimits}
\providecommand{\res}{\mathop{\rm res}\nolimits}
\providecommand{\grade}{\mathop{\rm grade}\nolimits}
\providecommand{\zz}{\Omega}
\providecommand{\pd}{\mathop{\rm pd}\nolimits}

\providecommand{\depth}{\mathop{\rm depth}\nolimits}

\providecommand{\add}{\mathop{\rm add}\nolimits}
\providecommand{\DD}{\mathop{\rm \Delta}\nolimits}

\providecommand{\Md}{\mathop{\rm Mod}\nolimits}

\providecommand{\V}{\mathrm{V}}
\providecommand{\NA}{\mathop{\rm NA}\nolimits}
\providecommand{\Ares}{\res_\mathcal{A}}
\providecommand{\atr}{\tr_\mathcal{A}}
\providecommand{\Trc}{\tr_C}

\providecommand{\thick}{\mathop{\rm Thick}\nolimits}
\providecommand{\hdim}{\mathop{\rm \mbox{-}dim}\nolimits}
\providecommand{\ABdim}{\mathop{\rm AB\mbox{-}dim}\nolimits}
\providecommand{\hmor}{\mathop{\rm H\mbox{-}Morph}\nolimits}

\providecommand{\NN}{\mathbb{N}}

\providecommand{\A}{\mathcal{A}}
\providecommand{\B}{\mathcal{B}}
\providecommand{\Y}{\mathcal{Y}}
\providecommand{\X}{\mathcal{X}}
\providecommand{\Z}{\mathcal{Z}}

\providecommand{\PP}{\mathcal{P}}
\providecommand{\C}{\mathcal{C}}
\providecommand{\N}{\mathcal{N}}

\providecommand{\M}{\mathcal{M}}
\providecommand{\G}{\mathcal{G}}
\providecommand{\W}{\mathcal{W}}

\providecommand{\Ss}{\mathfrak{S}}
\providecommand{\R}{\mathfrak{R}}

\providecommand{\mm}{\mathfrak{m}}

\providecommand{\p}{\mathfrak{p}}

\providecommand{\Ga}{\Gamma}
\providecommand{\de}{\delta}

\providecommand{\ep}{\varepsilon}

\providecommand{\ta}{\theta}
\providecommand{\io}{\iota}

\providecommand{\ld}{\lambda}
\providecommand{\Ld}{\Lambda}

\providecommand{\ph}{\varphi}

\providecommand{\sbe}{\subseteq}

\providecommand{\sbne}{\subsetneq}

\providecommand{\x}{\times}
\providecommand{\xto}{\xrightarrow}

\providecommand{\del}{\backslash}

\providecommand{\jn}{\lor}
\providecommand{\da}{\dagger}
\providecommand{\op}{\oplus}
\providecommand{\DA}{\DD(\mathcal{A})}

\providecommand{\dell}{\partial}

\providecommand{\MCM}{\mbox{MCM}}
\providecommand{\GDZ}{\mbox{GDZ}}

\oddsidemargin-.3cm
\evensidemargin0cm
\setlength{\topmargin}{-.3in}
\textwidth17cm
\textheight23cm

\title{Classifying resolving subcategories}

\author{William Sanders}



\begin{document}
\maketitle
\begin{abstract}
We use the theory of Auslander Buchweitz approximations to classify certain resolving subcategories containing a semidualizing or a dualizing module.  In particular, we show that if the ring has a dualizing module, then the resolving subcategories containing maximal Cohen-Macaulay modules are in bijection with grade consistent functions and thus are the precisely the dominant resolving subcategories.  
\end{abstract}

\section{Introduction}  Classifying various types of subcategories  of $\md(R)$ and $D(R)$ for a commutative ring $R$ has been the subject of much recent research.  These classifications are intrinsically connected to $\spec R$ or some other topological space.  
For instance, the Hopkins Neeman Theorem in \cite{Hopkins87} and \cite{Neeman92} and Gabriel's Theorem in \cite{Gabriel62}  give  a bijection between the Serre subcategories of $\md(R)$, the thick subcategories of perfect complexes, and the specialization closed subsets of $\spec R$.  Another example is the work regarding the classification of thick subcategories of $\md R$ such as in \cite{Takahashi10} and \cite{Stevenson12}.  

Recently, much attention has been given to classifying the resolving subcategories of $\md(R)$.  The study of resolving subcategories began with  Auslander and Bridger's influential work  in \cite{AuslanderBridger69} where they define the category of Gorenstein dimension zero modules, which we will denote by $\GDZ$.  Also, they  generalize the notion of projective dimension by defining Gorenstein dimension through approximations of Gorenstein dimension zero modules.  In their paper, they also prove that  $\GDZ$ has certain homological closure properties which cause Gorenstein dimension to behave similarly to projective dimension.  They then take these homological closure properties of $\GDZ$ as the definition of resolving subcategories.  We can take dimension with respect to a resolving subcategory, and, as in the case of $\GDZ$, these homological closure properties force this dimension function to also behave similarly to projective dimension.  See Section \ref{alpha} for further exposition. 

The classification of resolving subcategories was advanced by Dao and Takahashi in \cite{DaoTakahashi13}, where they give a bijection between the  set of resolving subcategories of the category of finite projective dimension modules and the set of grade consistent functions.  A function $f:\spec R\to \NN$ is called grade consistent if it is increasing (as a morphism of posets) and $f(p)\le \grade(p)$ for all $p\in\spec(R)$. This result motivated the author to find other situations where a similar bijection exists, furthering the use of grade consistent functions in classifying resolving subcategories.    Before the work of Dao and Takahashi, Takahashi classifies, over Cohen-Macaulay rings, resolving subcategories  closed under tensor products and Auslander transposes in \cite{Takahashi13}, and in \cite{Takahashi11} he classifies the contravariantly finite resolving subcategories of a Henselian local Gorenstein ring.   In \cite{Takahashi09}, Takahashi also studies  resolving subcategories which are free on the punctured spectrum.  In \cite{AuslanderReiten91}, Auslander and Reiten discover a connection between resolving subcategories and tilting theory, and they classify all the contravariantly finite resolving subcategories using cotilting bundles.  After the work of Dao and Takahashi, the resolving  subcategories of the category of finite projective dimension modules were also classified in \cite{HugelPospisiletal12} in terms of descending sequences of specialization closed subsets of $\spec R$, and were also classified in \cite{HugelSaorin14} in terms of certain t-structures.  

In this paper, we assume that $R$ is commutative and Noetherian, and we consider only finitely generated modules. Let $\PP$ denote the category of projective modules and $\Ga$ the set of grade consistent functions.  For categories $\M,\X\sbe\md(R)$ and $f\in\Ga$, we define 
\[\Ld_\M(f)=\{X\in\md(R)\mid \add \M_p\hdim X_p\le f(p)\quad  \forall p\in\spec R\}\]
\[\Phi_\M(\X):\spec R\to \NN\quad\quad\quad p\mapsto \sup\{\add \M_p\hdim X_p\mid X\in\X\}\]
where $\add \M_p$ is the smallest subcategory of $\md(R_p)$ closed under direct sums and summands and containing $M_p$ for every $M\in \M$, and $\add \M_p\hdim X_p$ is the smallest resolution of $X_p$ by objects in $\add \M_p$.  Let $\R$ denote the collection of resolving subcategories of $\md(R)$.  For any $\M\sbe\md(R)$, set $\DD(\M)=\{X\in\md(R)\mid \M\hdim X<\infty\}$ and let $\R(\M)$ be the collection of resolving subcategories $\X$ such that $\M\sbe \X\sbe\DD(\M)$.   Using our new notation, we can restate  Dao and Takahashi's result from \cite{DaoTakahashi13}.

\begin{theorem}
When $R$ is Noetherian, the following is a bijection
\[\xymatrix{
\R(\PP) \ar@<.5ex>[r]^{\Ld_\PP} 	& \Ga \ar@<.5ex>[l]^{\Phi_\PP}\\
}\]
where $\Ld_\PP$ and $\Phi_\PP$ are inverses of each other.
\end{theorem}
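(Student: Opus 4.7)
The plan is to establish that $\Ld_\PP$ and $\Phi_\PP$ are well-defined and mutually inverse; this splits into four tasks, handled in order: well-definedness of each map, then $\Phi_\PP \circ \Ld_\PP = \id_\Ga$, then $\Ld_\PP \circ \Phi_\PP = \id_{\R(\PP)}$.

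For well-definedness, I would check that $\Phi_\PP(\X) \in \Ga$. For $X \in \X \sbe \DD(\PP)$, the Auslander-Buchsbaum formula in $R_p$ yields $\pd_{R_p}(X_p) \le \depth(R_p) = \grade(p)$; and for $p \sbe q$, $R_p$ being a further localization of $R_q$ gives $\pd_{R_p}(X_p) \le \pd_{R_q}(X_q)$, whence monotonicity. That $\Ld_\PP(f)$ is a resolving subcategory of $\DD(\PP)$ is a routine verification using that pointwise projective dimension is preserved or decreased under direct summands, extensions, and syzygies.

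For $\Phi_\PP \circ \Ld_\PP = \id_\Ga$, the inequality $\Phi_\PP(\Ld_\PP(f)) \le f$ is immediate from the definition. For the reverse, given $p$ with $n = f(p)$, I would produce $M \in \Ld_\PP(f)$ realizing $\pd_{R_p}(M_p) = n$ by taking $M = R/(\underline{x})$ for a regular sequence $\underline{x} \sbe p$ of length $n$, which exists since $n \le \grade(p)$. Then $\pd_{R_q}(M_q) = n$ at primes $q \spe (\underline{x})$ via the Koszul complex, and $M_q = 0$ elsewhere. The technical point is arranging that $f(q) \ge n$ for each $q \spe (\underline{x})$; by monotonicity this reduces to ensuring the (finitely many) minimal primes of $(\underline{x})$ lie in the specialization-closed set $\{q : f(q) \ge n\}$, which is a prime-avoidance argument carried out within $p$ (no prime with $f$-value below $n$ contains $p$, since that would violate $f(p) = n$).

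Finally, $\Ld_\PP \circ \Phi_\PP = \id_{\R(\PP)}$ is the heart of the theorem. The inclusion $\X \sbe \Ld_\PP(\Phi_\PP(\X))$ is tautological. For the reverse, given $X \in \Ld_\PP(\Phi_\PP(\X))$, I would induct on $\pd X$; the base case $\pd X = 0$ places $X$ in $\PP \sbe \X$. For the inductive step the goal is to construct a short exact sequence $0 \to X \to N \to X' \to 0$ with $N$ assembled from the regular-sequence quotients of the previous step (hence lying in $\X$, since their prime-wise projective dimensions are dominated by $\Phi_\PP(\X)$) and $X'$ of strictly smaller projective dimension while still satisfying the pointwise bound by $\Phi_\PP(\X)$. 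Induction gives $X' \in \X$, and then closure of $\X$ under kernels of epimorphisms from objects of $\X$ forces $X \in \X$. The main obstacle is the existence of this envelope-like sequence with the simpler cokernel; this is precisely the Auslander-Buchweitz approximation technique that the paper's abstract identifies as its guiding tool.
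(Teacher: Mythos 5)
This theorem is cited in the paper from \cite{DaoTakahashi13} without proof, so there is no internal argument to compare against; judged on its own, your sketch has a concrete gap in the surjectivity step $\Phi_\PP \Ld_\PP = \id_\Ga$. You propose $M = R/(\underline{x})$ for a regular sequence $\underline{x} \sbe p$ of length $n = f(p)$ and then need $V(\underline{x}) \sbe W := \{q : f(q) \ge n\}$, since $\pd_{R_q} M_q = n$ at every $q \spe (\underline{x})$. Prime avoidance lets each $x_i$ dodge a prescribed finite list of primes not containing $p$, but it gives no control over the new minimal primes of $(x_1,\dots,x_i)$, and in fact this construction is impossible whenever $n < \grade(p)$ and $W = V(p)$: one would need $\sqrt{(\underline{x})} = p$, which forces $\h(p) = n = \grade(p)$. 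A grade consistent $f$ with $W = V(p)$ and $f(p) = n < \grade(p)$ does exist (take $f(q) = n$ for $q \spe p$ and $f(q) = 0$ otherwise); the simplest instance is $R = k[x,y]_{(x,y)}$, $p = \mm$, $n = 1$, where every nonzerodivisor $a \in \mm$ has $V(a)$ containing a height-one prime $q$ at which $\pd_{R_q}(R_q/(a)) = 1 > 0 = f(q)$, so $R/(a) \notin \Ld_\PP(f)$. The module that works there is $\mm = \zz^1 k$, of projective dimension $1$ and free off $\mm$, which is not of the form $R/(\underline{x})$; the correct general construction uses syzygies of residue modules rather than quotients by regular sequences and is \cite[Lemma 5.1]{DaoTakahashi13}, which this paper itself invokes in its proof of the generalization, Theorem \ref{main2}.

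Two smaller points. Your well-definedness step uses $\depth(R_p) = \grade(p)$, which is false for general Noetherian $R$ (one only has $\grade(p) = \inf_{q \spe p}\depth(R_q) \le \depth(R_p)$); the conclusion $\pd_{R_p}(X_p) \le \grade(p)$ is still true, but must be routed through $\pd_{R_p}(X_p) \le \inf_{q \spe p}\pd_{R_q}(X_q) \le \inf_{q \spe p}\depth(R_q)$. And for the hardest direction $\Ld_\PP\Phi_\PP = \id$, the envelope-via-Auslander--Buchweitz mechanism you gesture at is not how the known proof goes: it reduces to local rings (Lemma \ref{local}), then to modules free on the punctured spectrum (via constructions analogous to Proposition \ref{replace} and Lemma \ref{sequence}), and finally classifies that subcategory by a finite filtration (the analogue of Theorem \ref{filtration}). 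Auslander--Buchweitz approximation (Theorem \ref{hull}) enters this paper's arguments when passing from a small resolving subcategory $\A$ to a larger $\M$ that it cogenerates, not in this base case over $\PP$.
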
 
Our first main result is Theorem \ref{engine}, which is the following.  Note that throughout this paper, all thick subcategories contain $R$.

\begin{theorem*}[A]
Let $\Psi$ be a set of increasing functions from $\spec R$ to $\NN$.  Suppose $\A\sbe \M$ such that $\A$ cogenerates $\M$ and $\add \A_p$ is thick in $\add \M_\p$ for all $\p\in\spec R$.  Define $\eta_\A^\M:\R(\A)\to \R(\M)$ by $\eta_\A^\M(\X)=\res (\X\cup\M)$ and $\rho_\A^\M:\R(\M)\to \R(\A)$ by setting $\rho_\A^\M(\X)=\DD(\A)\cap \X$.  If $\Phi_\A$ and $\Ld_\A$ are inverses of each other giving a bijection between $\R(\A)$ and $\Psi$, then we have the following commutative diagram
\[\xymatrix{
\R(\M) \ar[dr]^{\Phi_\M} & &\\
& \Psi &\\
\R(\A) \ar[ur]^{\Phi_\A} \ar[uu]^{\eta_\A^\M}& &\\
}\]
where $\Phi_\M$ is bijective with $\Ld_\M$ its inverse.   Moreover,  $\rho_\A^\M$ is the inverse of $\eta_\A^\M$.  
\end{theorem*}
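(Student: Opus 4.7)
The strategy is to first establish that $\eta_\A^\M$ and $\rho_\A^\M$ are mutually inverse bijections between $\R(\A)$ and $\R(\M)$, then use the commutative triangle to transport bijectivity from $\Phi_\A$ to $\Phi_\M$. Both maps are easily seen to be well-defined: $\res(\X\cup\M)$ is resolving by construction and contained in $\DD(\M)$ because $\X\sbe\DD(\A)\sbe\DD(\M)$ (the second inclusion uses $\A\sbe\M$) while $\DD(\M)$ is itself resolving; the intersection $\DD(\A)\cap\X$ is resolving as the intersection of two resolving subcategories and contains $\A$ since $\A\sbe\M\sbe\X$. The essential input everywhere is Auslander--Buchweitz approximation theory: the hypothesis that $\A$ cogenerates $\M$ provides, for every object of $\M$, the short exact sequences needed to pass between $\A$-resolutions and $\M$-resolutions.

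Next I would verify $\rho_\A^\M\circ\eta_\A^\M=\id_{\R(\A)}$. The inclusion $\X\sbe\DD(\A)\cap\res(\X\cup\M)$ is trivial. Conversely, take $Y\in\DD(\A)\cap\res(\X\cup\M)$: $Y$ is built by resolving closure from $\X\cup\M$ and has finite $\A$-dimension, so an Auslander--Buchweitz approximation (afforded by cogeneration) lets one replace the $\M$-contributions in a finite presentation of $Y$ by $\A$-contributions, forcing $Y\in\X$. Dually, for $\eta_\A^\M\circ\rho_\A^\M=\id_{\R(\M)}$, the inclusion $\res((\DD(\A)\cap\Y)\cup\M)\sbe\Y$ is automatic since $\Y$ is resolving and contains both $\M$ and $\DD(\A)\cap\Y$. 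For the reverse, given $Y\in\Y\sbe\DD(\M)$, the approximation sequence produced by cogeneration exhibits $Y$ as an extension built from an object in $\M$ and an object of finite $\A$-dimension which, by the resolving properties of $\Y$, also lies in $\Y\cap\DD(\A)$; hence $Y\in\res((\DD(\A)\cap\Y)\cup\M)$.

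For the commutativity $\Phi_\M\circ\eta_\A^\M=\Phi_\A$, one compares values at each $\p\in\spec R$. Because every element of $\M$ has $\add\M_\p$-dimension zero, the resolving closure $\res(\X\cup\M)$ contributes no new $\add\M_\p$-dimension beyond what $\X$ already contributes, so
\[\Phi_\M(\eta_\A^\M(\X))(\p)=\sup\{\add\M_\p\hdim X_\p\mid X\in\X\}.\]
Since $\X\sbe\DD(\A)$, each $X_\p$ has finite $\add\A_\p$-dimension, and the hypothesis that $\add\A_\p$ is thick in $\add\M_\p$ forces $\add\M_\p\hdim X_\p=\add\A_\p\hdim X_\p$: a shortest $\add\M_\p$-resolution can be refined by breaking out the $\add\A_\p$-contributions via the thickness (two-out-of-three) condition. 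Thus $\Phi_\M(\eta_\A^\M(\X))(\p)=\Phi_\A(\X)(\p)$. Combined with the bijectivity of $\eta_\A^\M$ and $\Phi_\A$, this identifies $\Phi_\M=\Phi_\A\circ\rho_\A^\M$ as a composition of bijections, hence bijective. Its inverse is $\eta_\A^\M\circ\Ld_\A$, which sends $f\in\Psi$ to $\res(\Ld_\A(f)\cup\M)$; a direct comparison with the definition of $\Ld_\M(f)$, again invoking the local thickness, shows these agree, so the inverse is indeed $\Ld_\M$.

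The main obstacle I expect is the Auslander--Buchweitz step in the second paragraph: verifying that the cogeneration hypothesis produces approximation sequences of the precise form needed so that an arbitrary $Y\in\Y$ actually lies in the resolving closure of $(\DD(\A)\cap\Y)\cup\M$, rather than in some larger closure. This requires a careful interplay between the global finiteness of the $\M$-dimension (ensuring the approximation process terminates) and the resolving closure properties of $\Y$ (ensuring the intermediate terms remain inside $\Y$); the thickness hypothesis then bridges the local and global dimension statements needed for the commutative triangle.
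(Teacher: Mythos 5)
Your argument for $\eta_\A^\M\circ\rho_\A^\M=\id_{\R(\M)}$ is correct and is essentially the paper's Proposition~\ref{rhoinjective}: the Auslander--Buchweitz hull $0\to Y\to A\to M\to 0$ with $A\in\DD(\A)$ and $M\in\M$ lands $A$ in $\Y\cap\DD(\A)$ by extension closure, and then the resolving property of $\res((\DD(\A)\cap\Y)\cup\M)$ pulls $Y$ back in. Your commutativity computation $\Phi_\M\circ\eta_\A^\M=\Phi_\A$ is also correct, and is exactly the paper's combination of Lemma~\ref{jn} (the resolving closure adds nothing to $\Phi_\M$ since $\Phi_\M(\M)=0$) with Lemma~\ref{dimension} (local thickness forces $\add\A_\p\hdim$ and $\add\M_\p\hdim$ to agree on $\DD(\A)$).

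The gap is in the other direction, $\rho_\A^\M\circ\eta_\A^\M=\id_{\R(\A)}$. You assert that for $Y\in\DD(\A)\cap\res(\X\cup\M)$, an Auslander--Buchweitz approximation ``lets one replace the $\M$-contributions in a finite presentation of $Y$ by $\A$-contributions, forcing $Y\in\X$.'' There is no such finite presentation to operate on: elements of $\res(\X\cup\M)$ are obtained by iterating extensions, syzygies, and summands any finite number of times, and there is no apparent way to track or ``replace'' $\M$-contributions in that process. The obvious attempt---take the hull $0\to Y\to A\to M\to 0$ with $A\in\DD(\A)$, $M\in\M$, and try to show $A\in\X$---is circular: $A$ lies in $\DD(\A)\cap\res(\X\cup\M)$, which is precisely the set you are trying to identify with $\X$. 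The paper closes this direction by a genuinely different argument: it uses the \emph{hypothesis} that $\Phi_\A,\Ld_\A$ are inverse bijections, sandwiches $\Phi_\A(\rho_\A^\M\eta_\A^\M(\X))$ between $\Phi_\A(\X)$ and $\Phi_\M(\res(\X\cup\M))=\Phi_\A(\X)$, and then invokes injectivity of $\Phi_\A$ to conclude equality of the subcategories. No direct module-theoretic argument is given, and I don't believe one exists at this level of generality.

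Two further remarks. First, the ``main obstacle'' you flag at the end concerns $\eta\rho=\id$, but that direction is actually fine; the problematic direction is $\rho\eta=\id$, which you treat as routine. Second, your material can be reorganized to avoid the gap: $\eta\rho=\id$ already makes $\rho$ injective and $\eta$ surjective, and combined with $\Phi_\M\circ\eta=\Phi_\A$ and bijectivity of $\Phi_\A$ one gets $\Phi_\M=\Phi_\A\circ\rho$ on all of $\R(\M)$, hence $\Phi_\M$ bijective, hence $\eta=\Phi_\M^{-1}\Phi_\A$ bijective, hence $\rho=\eta^{-1}$. This is morally equivalent to the paper's route. But the direct claim in your second paragraph, as stated, is not a proof. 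The identification of $\Ld_\M$ as the inverse of $\Phi_\M$ also needs the sandwich $f=\Phi_\A\Ld_\A(f)=\Phi_\M\eta_\A^\M\Ld_\A(f)\le\Phi_\M\Ld_\M(f)\le f$ rather than the ``direct comparison'' you gesture at, since $\Ld_\M(f)=\res(\Ld_\A(f)\cup\M)$ is not obvious a priori.
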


This result allows us to extend the bijection from \cite{DaoTakahashi13} to a plethora of categories.  We use it to prove the following result which is essentially Theorem \ref{main3}. Note that $\G_C$ is the category of totally $C$-reflexive modules where $C$ is a semidualizing module: see Definition \ref{tot} and Definition \ref{sdm}.  Define ,  $\rho_\M^\N$ and  $\eta_\M^\N$ similarly to $\rho_\A^\M$ and $\eta_\A^\M$.  
 
\begin{theorem*}[B]
For any  thick subcategory $\M$ of $\G_C$ containing $C$, $\Ld_\M$ and $\Phi_\M$ give a bijection between $\R(\M)$ and $\Ga$. 

Furthermore, let $\Ss$ denote the collection of thick subcategories of  $\G_C$ containing $C$.  The following is a bijection.
 \[\Ld:\Ss\x\Ga\to \bigcup_{\M\in\Ss} \R(\M)\sbe \R\] 
For any $\M,\N\in\Ss$ with $\M\sbe \N$, then the following diagram commutes.
\[\xymatrix{
\R(\N) \ar[dr]^{\Phi_\N} 							& 		&\\
					& \Ga &\\
\R(\M) \ar[ur]^{\Phi_\M} \ar[uu]^{\eta_\M^\N}	& 		&\\
}\]
In particular,  $\rho_\M^\N$ and  $\eta_\M^\N$ are inverse functions. 

\end{theorem*}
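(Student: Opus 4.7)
The plan is to derive Theorem B entirely from Theorem A combined with Dao-Takahashi's Theorem (the bijection $\R(\PP) \cong \Ga$). Part 1 (the bijection for each $\R(\M)$) will follow by applying Theorem A with $\A = \PP$ and $\M$ the given thick subcategory of $\G_C$. Part 3 (the commutative diagram) will follow by applying Theorem A with $\A = \M$ and with $\N$ playing the role of the ambient subcategory. Part 2 (the global bijection) will require, in addition, an intrinsic recovery of $\M$ from a subcategory $\X \in \R(\M)$.

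For Parts 1 and 3, I need to verify the hypotheses of Theorem A: that $\A$ cogenerates the ambient subcategory, and that $\add \A_\p$ is thick in the localized ambient subcategory for every $\p \in \spec R$. Cogeneration is the substantive point. For $M$ in a thick subcategory $\M \sbe \G_C$ containing $C$, dualizing a projective resolution of $\hm(M, C)$ by $\hm(-, C)$ produces an exact sequence $0 \to M \to C^n \to M' \to 0$ with $M' \in \G_C$; since $\M$ is thick and contains $C$ we have $C^n \in \M$, and then $M' \in \M$ follows from the closure properties of a thick subcategory of $\G_C$. For Part 3 the same construction gives cogeneration of $\N$ by $\M$. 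The thickness of $\add \A_\p$ inside $\add \M_\p$ is checked by localizing and noting that the operations defining thickness commute with localization. Theorem A then delivers the bijection $\R(\M) \cong \Ga$ for Part 1, and for Part 3 the commutative diagram together with the inverse pair $(\eta_\M^\N, \rho_\M^\N)$.

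For Part 2, surjectivity of $\Ld$ is immediate from Part 1. For injectivity, the plan is to show that given $\X \in \R(\M)$ one recovers $\M$ as $\X \cap \G_C$: the inclusion $\M \sbe \X \cap \G_C$ is immediate, and for the reverse, any $Y \in \X \cap \G_C$ has a finite $\M$-resolution because $\X \sbe \DD(\M)$; thickness of $\M$ in $\G_C$ combined with closure of $\G_C$ under syzygies then yields $Y \in \M$ by induction on resolution length. Once $\M$ is recovered from $\X$, so is $f = \Phi_\M(\X)$, establishing injectivity. The main obstacle I anticipate is the cogeneration condition of Theorem A when taking $\A = \PP$: since totally $C$-reflexive modules do not in general embed into free modules, $\PP$ may fail to cogenerate $\M$ directly, and an intermediate step (for instance first establishing the bijection for $\R(\add(\PP \cup \{C\}))$ or a similar intermediate subcategory and then bootstrapping to $\M$) may be needed in place of the single idealized application of Theorem A sketched above.
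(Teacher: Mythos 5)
The plan breaks at exactly the point you flag but dismiss as a possibly-fixable technicality: Theorem A with $\A=\PP$ and $\M$ a thick subcategory of $\G_C$ containing $C$ requires $\PP$ to cogenerate $\M$, i.e.\ every $M\in\M$ must embed in a projective with cokernel in $\M$. The embedding you actually get by dualizing a projective resolution of $M^\da$ is $0\to M\to C^n\to M'\to 0$, so $\add C$ cogenerates $\M$, not $\PP$. Unless $C\cong R$, a totally $C$-reflexive module need not embed in a projective at all (e.g.\ $C^\da=R$, but $\hm(C,R)$ is generally not an embedding target for $C$), so the cogeneration hypothesis simply fails and Theorem A cannot be invoked with base $\A=\PP$.

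The suggested patch --- passing through $\res(\PP\cup\{C\})$ or similar --- does not help. The smallest thick subcategory of $\G_C$ containing $C$, call it $\A_0$, is contained in every $\M\in\Ss$, so any bootstrap must start from $\R(\A_0)$. But $\PP$ does not cogenerate $\A_0$ either, for the same reason, so $\R(\A_0)\cong\Ga$ is not a corollary of the Dao--Takahashi bijection via Theorem A. This is precisely why the paper proves the base case $\R(\A_0)\cong\Ga$ independently (Theorem \ref{main2}), using Proposition \ref{6.3} (that $\A_0$ is closed under $\da$), the generalized Auslander transpose $\atr$ of Section \ref{delta'}, and the local filtration result Theorem \ref{filtration}; only then is Theorem A applied with $\A=\A_0$ (not $\PP$) to bootstrap to all of $\Ss$ via Lemma \ref{6.4}. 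Your proposal is missing this entire substitute for the base case, which is the main content of the theorem. (As a side note, your idea for injectivity in Part 2 --- recovering $\M$ as $\X\cap\G_C$ and invoking Lemma \ref{dimension} to conclude $\M\hdim Y=\G_C\hdim Y=0$ --- is correct and is a cleaner route than the paper's argument in Theorem \ref{main}, which passes through closure of $\Ss(\A)$ under intersections; but that does not repair the base case.)
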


These theorems show that the classification of resolving subcategories is intrinsically linked to the classification of thick subcategories of totally $C$-reflexive modules and hence to the classification of thick subcategories of $\md(R)$, a topic of current research. See for instance \cite{Takahashi10} or \cite{Neeman92}.  Applying these results in the Gorenstein case yields Theorem \ref{maingor} which, letting $\MCM$ denote the category of maximal Cohen-Macaulay modules, states

\begin{theorem*}[C]
If $R$ is Gorenstein, then we have the following bijections which commute
\[\xymatrix{
\{\mbox{Thick subcategories of }\MCM\}\x \Ga \ar[dr]^{\Ld} \ar[dd]^{\Ld_\PP}& &\\
& \{\Z\in \R\mid \Z\cap \MCM \mbox{ is thick in } \MCM\} &\\
\{\mbox{Thick subcategories of }\MCM\}\x \R(\PP) \ar[ur]^{\Xi} & &\\
}\]where $\Xi(\M,\X)=\res(\M\cup\X)$.  

\end{theorem*}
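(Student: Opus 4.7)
Since $R$ is Gorenstein, $R$ itself is a dualizing module and every maximal Cohen--Macaulay module is totally $R$-reflexive, so $\G_R = \MCM$. Thus Theorem B specialized to $C = R$ yields immediately the bijection
\[\Ld : \Ss \x \Ga \longrightarrow \bigcup_{\M \in \Ss} \R(\M),\]
where $\Ss$ denotes the thick subcategories of $\MCM$ containing $R$. It therefore remains to identify the image of $\Ld$ with $\{\Z \in \R \mid \Z \cap \MCM \mbox{ is thick in } \MCM\}$, and to deduce the two bijections and commutativity of the diagram from this identification together with the Dao--Takahashi bijection $\Ld_\PP$.

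For the image identification, given $\M \in \Ss$ and $\Z \in \R(\M)$, I would show $\Z \cap \MCM = \M$: the containment $\M \sbe \Z \cap \MCM$ is automatic, while for $N \in \Z \cap \MCM$ the inclusion $\Z \sbe \DD(\M)$ supplies a finite $\M$-resolution of $N$ whose syzygies are MCM (since $R$ is Gorenstein and $\MCM$ is closed under kernels of surjections), so that two-out-of-three inside the thick subcategory $\M$ of $\MCM$ propagates membership in $\M$ from the terms of the resolution up to $N$ itself. Thus $\Z \cap \MCM$ is thick. Conversely, given $\Z \in \R$ with $\M := \Z \cap \MCM$ thick in $\MCM$, for any $X \in \Z$ the Gorenstein hypothesis yields $n$ with $\Omega^n X$ MCM; resolvingness of $\Z$ then gives $\Omega^n X \in \Z \cap \MCM = \M$, producing a finite $\M$-resolution of $X$ (using that $\M$ contains all projectives, being thick and containing $R$). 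Hence $\Z \in \R(\M)$, as required.

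The Dao--Takahashi bijection $\Ld_\PP : \Ga \to \R(\PP)$ acting on the second factor provides the bijection on the left edge of the diagram. Commutativity of the diagram reduces to the identity $\Ld_\M(f) = \res(\M \cup \Ld_\PP(f)) = \Xi(\M, \Ld_\PP(f))$, which in turn is the statement that $\eta_\PP^\M(\Ld_\PP(f)) = \Ld_\M(f)$. Applying Theorem A to $\A = \PP \sbe \M$, the commutative triangle there reads $\Phi_\M \circ \eta_\PP^\M = \Phi_\PP$; evaluating both sides at $\Ld_\PP(f)$ and using $\Phi_\PP(\Ld_\PP(f)) = f$ together with the inverse relation $\Ld_\M = \Phi_\M^{-1}$ from Theorem B yields the required equality. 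The principal obstacle is the image identification above: both directions rely essentially on the Gorenstein hypothesis, once to guarantee that every module has an MCM syzygy and once to ensure that syzygies along a resolution by objects of $\M$ remain inside $\MCM$ so that thickness of $\M$ in $\MCM$ can be applied.
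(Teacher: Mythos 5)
Your proposal is correct and follows essentially the same route as the paper: specialize Theorem B to $C=R$ (so $\G_R=\MCM$), identify the image of $\Ld$ with $\{\Z\in\R\mid \Z\cap\MCM \text{ thick in }\MCM\}$ by showing $\Z\cap\MCM=\M$ for $\Z\in\R(\M)$ and conversely $\Z\in\R(\Z\cap\MCM)$ via MCM syzygies (this is exactly what the paper does by observing $\DD(\M)\cap\MCM=\M$), and then obtain commutativity from the triangle $\Phi_\M\circ\eta_\PP^\M=\Phi_\PP$ of Theorem A applied with $\A=\PP$, which is precisely the content of the paper's Proposition 3.1.
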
 

Of independent interest, using semidualizing modules, we generalize the famed Auslander transpose.  This generalization is similar but different to the generalizations of Geng and Huang in \cite{Geng13} and \cite{Huang99}.    

This paper is organized as follows:  Section \ref{alpha} gives general information about resolving subcategories, and Section \ref{beta} gives pertinent background regarding semidualizing modules.  We prove Theorem A in Section \ref{gamma}.  In Section \ref{delta'}, we generalize the Auslander transpose, which we use in Section \ref{delta} to  classify resolving subcategories which are locally Maximal Cohen-Macaulay.  In Section \ref{epsilon} we prove a special case  of Theorem B.   We prove Theorem B in full generality in Section \ref{zeta} by examining the thick subcategories of maximal Cohen-Macaulay modules containing $C$.  In the last section, these results are applied to the Gorenstein case.  Here, Theorem C and several other results are proven.

\section{Resolving Preliminaries}\label{alpha}
We proceed with an overview of resolving subcategories.  All subcategories considered are full and closed under isomorphisms.  For any collection $\M\sbe \md(R)$, let $\add(\M)$ be the smallest subcategory of $\md(R)$ containing $\M$ which is closed under direct sums and summands.  
\begin{definition}
Given a ring $R$, a full subcategory $\M\sbe\md(R)$ is resolving if the following hold.
\begin{enumerate}
\item $R$ is in $\M$
\item $M\op N$ is in $\M$ if any only if $M$ and $N$ are in $\M$
\item If $0\to M\to N\to L\to 0$ is exact and  $L\in\M$, then $N\in \M$ if and only if $M\in\M$.
\end{enumerate}
\end{definition}
By \cite[Lemma 3.2]{Yoshino05}, this is equivalent to saying these conditions hold.
\begin{enumerate}
\item All projectives are in $\M$
\item If $M\in \M$, then $\add(M)\sbe \M$
\item $\M$ is closed under extensions
\item  $\M$ is closed under syzygies
\end{enumerate}
For a subset $\M\sbe\md(R)$, we denote by $\res(\M)$ the smallest resolving subcategory containing $\M$.  Also, $\add \M$ will    Let $\PP$ be the category of finitely generated projective $R$-modules.

\begin{example}\label{example}
The following categories are easily seen to be resolving.
\begin{enumerate}
\item $\PP$
\item $\md(R)$
\item The set of Gorenstein dimension zero modules
\item For any $\B\sbe \Md(R)$ and any $n\ge 0$, $\{M\mid\ext^{>n}(M,B)=0\  \forall B\in \B\}$
\item For any $\B\sbe \Md(R)$ and any $n\ge 0$, $\{M\mid\tor^{>n}(M,B)=0\  \forall B\in \B\}$
\item When $R$ is Cohen-Macaulay, the set of maximal Cohen-Macaulay modules
\end{enumerate}
\end{example}

A special class of resolving subcategories are thick subcategories.
\begin{definition}
Let $\N\sbe\md(R)$.  A resolving subcategory $\M\sbe\N$ is a thick subcategory of $\N$ (or $\M$ is thick in $\N$)  if for any exact sequence $0\to L\to M\to N\to 0$ with $L,M\in\M$, if $N$ is in $\N$, then $N$ is in $\M$ too.  A thick subcategory refers to a thick subcategory of $\md(R)$. 
\end{definition}

For any $\M\sbe\md(R)$, let $\thick(\M)$ be the smallest thick subcategory of $\md(R)$ containing $\M$.  
\begin{example}
The following categories are easily seen to be thick subcategories.  Moreover, each example is the thick closure of a resolving subcategory in Example \ref{example}.
\begin{enumerate}
\item The set of  modules with  finite projective dimension 
\item $\md(R)$
\item The set of modules with finite Gorenstein dimension
\item For any $\B\sbe \Md(R)$ and any $n\ge 0$, $\{M\mid\ext^{\gg 0}(M,B)=0\  \forall B\in \B\}$
\item For any $\B\sbe \Md(R)$ and any $n\ge 0$, $\{M\mid\tor^{\gg 0}(M,B)=0\  \forall B\in \B\}$
\end{enumerate}
\end{example}
Resolving subcategories are studied in part because dimension with respect to a resolving subcategory has nice properties.  For a subset $\M\sbe\md(R)$ and a module $X\in \md(R)$, we say that $\M\hdim X=n$ if $n\in\NN$ is the smallest number such that there is an exact sequence 
\[0\to M_n\to \cdots\to M_0\to X\to 0\]
with $M_0,\dots,M_n\in\M$.  Projective dimension and Gorenstein dimension are dimensions with respect to resolving subcategories of projective modules and Gorenstein dimension zero modules respectively.  The following proposition from \cite[Proposition 3.3]{AuslanderBuchweitz89}  causes nice properties to hold for dimension with respect to a resolving subcategory.

\begin{prop}\label{dim}
If $\M$ is resolving and  $\M\hdim(X)\le n$, then for any exact sequence 
\[0\to L\to M_{n-1}\to\cdots\to M_0\to X\to 0\]
with each $M_i\in\M$,  $L$ is in $\M$.   
\end{prop}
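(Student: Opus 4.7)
The plan is to induct on $n$. For the base case $n=0$, the sequence degenerates to $0\to L\to X\to 0$, giving $L\cong X$, and the hypothesis $\M\hdim X\le 0$ means $X\in\M$, so $L\in\M$.

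For the inductive step, I would compare the given resolution with an ``optimal'' one. By hypothesis there is an exact sequence
\[0\to M'_n\to M'_{n-1}\to\cdots\to M'_0\to X\to 0\]
with all $M'_i\in\M$. Let $K=\ker(M_0\to X)$ and $K'=\ker(M'_0\to X)$, so $\M\hdim K'\le n-1$. Schanuel's lemma applied to the two length-one extensions $0\to K\to M_0\to X\to 0$ and $0\to K'\to M'_0\to X\to 0$ yields $K\oplus M'_0\cong K'\oplus M_0$. Direct-summing a length-$(n-1)$ $\M$-resolution of $K'$ with the trivial resolution $0\to M_0\xrightarrow{\id} M_0\to 0$ gives $\M\hdim(K'\oplus M_0)\le n-1$, hence $\M\hdim(K\oplus M'_0)\le n-1$.

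Now the original sequence, after taking the direct sum with $0\to M'_0\xrightarrow{\id}M'_0\to 0$ placed in the appropriate spot, produces
\[0\to L\to M_{n-1}\to\cdots\to M_2\to M_1\oplus M'_0\to K\oplus M'_0\to 0,\]
which is exact by splicing, and whose terms other than $L$ and the rightmost term all lie in $\M$ (using closure under direct sums, property 2). Since the rightmost term has $\M$-dimension at most $n-1$, the inductive hypothesis applied to this sequence yields $L\in\M$.

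The only real obstacle is making the Schanuel step honestly reduce the problem from $n$ to $n-1$: this depends critically on the closure of $\M$ under direct summands, since we only recover $K$ up to the summand $M'_0\in\M$, and on the fact that attaching the identity resolution of $M_0$ and $M'_0$ does not increase $\M$-dimension. Both follow from the resolving axioms, so once the Schanuel comparison is set up the induction runs cleanly.
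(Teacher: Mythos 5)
Your Schanuel step is where the argument breaks. Schanuel's lemma requires the middle terms of the two short exact sequences to be \emph{projective}, not merely in $\M$: from the pullback $P=M_0\times_X M'_0$ one gets exact sequences $0\to K'\to P\to M_0\to 0$ and $0\to K\to P\to M'_0\to 0$, and it is the projectivity of $M'_0$ (resp.\ $M_0$) that makes the second (resp.\ first) split and yields $P\cong K\oplus M'_0\cong K'\oplus M_0$. For $M_0,M'_0$ in an arbitrary resolving subcategory neither sequence need split, and the isomorphism $K\oplus M'_0\cong K'\oplus M_0$ is simply false in general (e.g.\ over $R=k[x]/(x^2)$ take $X=k$, $M_0=k$ via the identity, $M'_0=R$; then $K=0$, $K'\cong k$, and $R\not\cong k\oplus k$). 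Since your bound $\M\hdim(K\oplus M'_0)\le n-1$ is deduced entirely from this isomorphism, and the final application of the inductive hypothesis relies on that bound, the proof does not go through. Note also that you cannot sidestep this by invoking a characterization like ``$\M\hdim X\le n$ iff $\zz^n X\in\M$'' to make $M'_0$ free, since in this paper that characterization (Corollary 2.8) is itself a consequence of the very proposition you are proving.

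The overall strategy --- compare the given truncated sequence against an optimal one by descending to the kernels $K$ and $K'$ and induct on $n$ --- is sound, and the repair is to work with the two pullback sequences directly in place of the false isomorphism, trading splittings for the resolving axioms. From $0\to K'\to P\to M_0\to 0$ and a length-$(n-1)$ $\M$-resolution of $K'$, a horseshoe-type construction (surjecting free modules onto $M_0$ and its successive syzygies) produces a length-$(n-1)$ $\M$-resolution of $P$, using extension- and syzygy-closure of $\M$ to see the final kernel lies in $\M$. Then from the original sequence spliced with $0\to K\to P\to M'_0\to 0$, the same device produces a length-$(n-1)$ exact sequence with terms in $\M$ ending at $P$, whose initial kernel is an extension of $L$ by a syzygy of $M'_0$; the inductive hypothesis applied to $P$ (now known to have $\M$-dimension $\le n-1$) puts that kernel in $\M$, and the resolving axioms then force $L\in\M$. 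The moral is that the resolving axioms give you extension-closure and syzygy-closure as a substitute for the splitting that projectivity would supply, but they do not give Schanuel's isomorphism.
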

This proposition allows us to prove the following results.
\begin{corollary}\label{1.1}
If $\M$ is resolving, then $\M\hdim(X)=\inf\{n\mid \zz^n X\in\M\}$.
\end{corollary}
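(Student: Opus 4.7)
The plan is to prove the two inequalities separately. Write $d = \M\hdim(X)$ and $s = \inf\{n \mid \zz^n X \in \M\}$, and fix once and for all a projective resolution $\cdots \to P_1 \to P_0 \to X \to 0$ so that $\zz^n X$ is (up to projective summands) the kernel of $P_{n-1}\to P_{n-2}$.

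For $d \le s$, suppose $s$ is finite and $\zz^s X \in \M$. Since $\M$ is resolving, every finitely generated projective lies in $\M$, so the truncated resolution
\[
0 \to \zz^s X \to P_{s-1} \to \cdots \to P_0 \to X \to 0
\]
exhibits $X$ as having $\M$-resolution of length $\le s$, giving $d \le s$.

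For $s \le d$, suppose $d$ is finite. Apply Proposition \ref{dim} to the same truncated resolution of $X$ using the projective modules $P_0,\dots,P_{d-1}$ (which lie in $\M$). The proposition then forces the leftmost term, namely $\zz^d X$, to belong to $\M$, so $s \le d$.

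The only subtle point is handling the case where one or both quantities are infinite, and the standard well-definedness of $\zz^n X$ up to projective summands; but since $\M$ contains all projectives and is closed under direct summands, these are not obstacles. The two inequalities above give $d = s$, which is the desired equality.
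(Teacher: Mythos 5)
Your argument is correct and is essentially the same as the paper's: you exhibit the truncated projective resolution $0\to\zz^n X\to P_{n-1}\to\cdots\to P_0\to X\to 0$, read off $\M\hdim X\le n$ when $\zz^n X\in\M$ (using that projectives lie in $\M$), and invoke Proposition~\ref{dim} on that same sequence for the reverse inequality. The treatment of the infinite cases and of the well-definedness of $\zz^n X$ up to projective summands is sound and matches what the paper leaves implicit.
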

\begin{proof}
If $\zz^n X\in\M$, then we have
\[0\to \zz^n X\to F_{n-1}\to \cdots\to F_0\to X\to 0\]
with each $F_i$ projective.  This shows that $\M\hdim X\le n$.  If $\M\hdim X\le n$, the same sequence and Proposition \ref{1.1} show that $\zz^n X$ is in $\M$.
\end{proof}
\begin{lemma}
If $\M$ is resolving, then $\M\hdim X\op Y=\max\{\M\hdim X,\M\hdim Y\}$.
\end{lemma}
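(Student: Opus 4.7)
The plan is to use Corollary \ref{1.1} to reduce the question to a statement about syzygies, and then exploit that syzygies commute with finite direct sums together with the closure of a resolving subcategory under summands and direct sums.

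First, I would record the monotonicity observation that for any $X$ and any $n$, if $\Omega^n X \in \M$, then $\Omega^{n+1} X \in \M$. Indeed, the standard short exact sequence
\[
0 \to \Omega^{n+1} X \to F_n \to \Omega^n X \to 0
\]
with $F_n$ a free cover has $F_n,\,\Omega^n X \in \M$, so axiom (3) in the definition of resolving forces $\Omega^{n+1} X \in \M$. Iterating, Corollary \ref{1.1} gives the sharper statement that $\M\hdim X \le n$ if and only if $\Omega^n X \in \M$.

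Next, set $m = \M\hdim X$, $n = \M\hdim Y$, and suppose without loss of generality $m \le n$. If either is infinite the identity is trivial, so assume both are finite. By the observation above, $\Omega^n X \in \M$ and $\Omega^n Y \in \M$. Since an $n$-th syzygy of $X \oplus Y$ can be taken to be $\Omega^n X \oplus \Omega^n Y$ (form a projective resolution as the direct sum of projective resolutions of $X$ and $Y$), and since $\M$ is closed under direct sums by axiom (2), we get $\Omega^n(X \oplus Y) \in \M$. Applying Corollary \ref{1.1} again yields $\M\hdim(X \oplus Y) \le n = \max\{m,n\}$.

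For the reverse inequality, suppose $\M\hdim(X \oplus Y) = k$. Then $\Omega^k(X \oplus Y) = \Omega^k X \oplus \Omega^k Y$ lies in $\M$, and by closure under summands (axiom (2) again) both $\Omega^k X$ and $\Omega^k Y$ belong to $\M$. Therefore $\M\hdim X \le k$ and $\M\hdim Y \le k$, so $\max\{m,n\} \le k$. Combining the two inequalities completes the proof. There is no real obstacle here; the only subtlety is the mild ambiguity in choosing syzygies, which is harmless because $\M$ contains all projectives and is closed under summands, so any two choices of $\Omega^n$ differ by a projective summand that does not affect membership in $\M$.
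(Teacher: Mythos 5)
Your proof is correct and follows the same route as the paper: reduce via Corollary \ref{1.1} to checking whether $\Omega^n(X\oplus Y)=\Omega^n X\oplus\Omega^n Y$ lies in $\M$, and use closure of $\M$ under direct sums and summands. The paper's version is simply more terse; the substance is identical.
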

\begin{proof}
We have $\zz^n (X\op Y)=\zz^n X\op \zz^n Y$ for  a suitable choice of syzygies.  Since $\zz^n (X\op Y)$ is in $\M$ if and only if $\zz^n X$ and $\zz^n Y$ are in $\M$, the result follows from Corollary \ref{1.1}. Parts (\ref{21}) and (\ref{22}) are essentially proved in \cite[Theorem 18]{Masek99}.
\end{proof}
\begin{lemma}
If $\M$ is a resolving subcategory, and $0\to X\to Y\to Z\to 0$ is exact, then the following inequalities hold.
\begin{enumerate}
\item\label{21} $\M\hdim X\le\max\{\M\hdim Y,\M\hdim Z-1\}$
\item\label{22}  $\M\hdim Y\le \max\{\M\hdim X,\M\hdim Z\}$
\item\label{23}  $\M\hdim Z\le\max\{\M\hdim X,\M\hdim Y\}+1$
\end{enumerate}
\end{lemma}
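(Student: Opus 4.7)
The plan is to reduce each of the three inequalities to statements about syzygies lying in $\M$, using Corollary \ref{1.1} that characterizes $\M\hdim$ via syzygies, together with the Horseshoe Lemma applied to the short exact sequence $0\to X\to Y\to Z\to 0$. The Horseshoe Lemma produces, for each $n$, projective resolutions that fit into a short exact sequence of syzygies $0\to\zz^n X\to\zz^n Y\to\zz^n Z\to 0$ (up to projective summands), so membership of syzygies in $\M$ can be transferred along this SES. If any right-hand side is infinite the claim is vacuous, so throughout I assume all dimensions appearing are finite.

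I would dispatch part (\ref{22}) first as a warm-up. Setting $n=\max\{\M\hdim X,\M\hdim Z\}$, Corollary \ref{1.1} gives $\zz^n X,\zz^n Z\in\M$, and then axiom (3) of a resolving subcategory applied to the horseshoe SES $0\to\zz^n X\to\zz^n Y\to\zz^n Z\to 0$ (with $L=\zz^n Z\in\M$) produces $\zz^n Y\in\M$, so $\M\hdim Y\le n$.

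For both (\ref{21}) and (\ref{23}) a single pullback construction does the job. Pick a projective surjection $P\twoheadrightarrow \zz^n Z$ with kernel $K$; then $K$ agrees with $\zz^{n+1}Z$ up to projective summands, so $K\in\M$ iff $\zz^{n+1}Z\in\M$ (using that $\M$ contains projectives and is closed under summands). Let $E$ be the pullback of $P\to\zz^n Z\leftarrow\zz^n Y$. This produces two short exact sequences: $0\to\zz^n X\to E\to P\to 0$, which splits because $P$ is projective, giving $E\cong\zz^n X\oplus P$; and $0\to K\to E\to\zz^n Y\to 0$. For (\ref{23}), with $n=\max\{\M\hdim X,\M\hdim Y\}$, one has $\zz^n X,\zz^n Y\in\M$, so $E\in\M$ from the split sequence, and axiom (3) applied to the second SES (with $L=\zz^n Y\in\M$) yields $K\in\M$, hence $\zz^{n+1}Z\in\M$ and $\M\hdim Z\le n+1$. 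For (\ref{21}), with $n=\max\{\M\hdim Y,\M\hdim Z-1\}$, one has $\zz^n Y\in\M$ and $\zz^{n+1}Z\in\M$ (so $K\in\M$); axiom (3) applied to the second SES gives $E\in\M$, and $\zz^n X$ is a summand of $E$, whence $\zz^n X\in\M$ and $\M\hdim X\le n$.

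The only mild obstacle is bookkeeping: syzygies are only well-defined up to projective summands, and the horseshoe construction of $\zz^n Y$ requires a simultaneous choice of projective resolutions for $X$ and $Z$. Both issues are harmless because $\M$ contains projectives and is closed under direct summands, so ``$\zz^n W\in\M$'' is independent of the chosen resolution. Once this is observed, the three inequalities drop out uniformly from the horseshoe SES combined with the pullback trick above.
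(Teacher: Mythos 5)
Your proof is correct and essentially the same as the paper's: both reduce to the horseshoe short exact sequence of syzygies via Corollary \ref{1.1}, and both handle part (\ref{21}) with the same pullback diagram (your $E$, $P$, $K$ are the paper's $T$, $F$, $\zz^{k+1}Z$). The only cosmetic difference is that you route (\ref{23}) through the same pullback, whereas the paper gets it directly from the syzygy SES by observing $\M\hdim\zz^k Z\le 1$; both are trivially equivalent.
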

\begin{proof}
For suitable choices of syzygies, we have the following.
\[0\to \zz^k X\to \zz^k Y\to \zz^k Z\to 0\]
If $k= \max\{\M\hdim X,\M\hdim Z\}$, then,  by Corollary \ref{1.1}, $\zz^k X$ and $\zz^k Z$ are in $\M$, and thus, so is $\zz^k Y$, giving us \ref{22}.   If $k=\max\{\M\hdim X,\M\hdim Y\}$, then,  again by Corollary \ref{1.1}, $\zz^k X$ and $\zz^k Y$ will be in $\M$.  Therefore $\M\hdim \zz^k Z \le 1$, and so $\zz^{k+1} Z$ will be in $\M$.   Thus by Corollary \ref{1.1}, $\M\hdim Z\le k+1$, proving (\ref{23}). 

Now take $k=\max\{\M\hdim Y,\M\hdim Z-1\}$.  Then $\zz^k Y$ and $\zz^{k+1} Z$ are in $\M$.  We take the pushout diagram
\[\xymatrix{
& 					& 									& 0 \ar[d]							& 0 \ar[d]						& \\
& 					& 									& \zz^{k+1} Z \ar@{=}[r] \ar[d]			& \zz^{k+1} Z \ar[d]				& \\
& 0 \ar[r]				& \zz^k X \ar[r] \ar@{=}[d]					& T \ar[r] \ar[d]						& F \ar[r] \ar[d]					& 0\\
& 0 \ar[r]				& \zz^k X \ar[r]							& \zz^k Y \ar[r] \ar[d]					& \zz^k Z  \ar[r] \ar[d]				& 0.\\
& 					& 									& 0									& 0								& \\
}\]
with $F$ free and hence in $\M$.  Since, by Corollary \ref{1.1},  $\zz^{k+1} Z$ and $\zz^k Y$ are in $\M$, so is $T$.  Since $F\in\M$, $\zz^k X$ has to also to be in $\M$.  Hence $\M\hdim X\le k$, and we have (\ref{21}).  
\end{proof}

For a subset $\M\sbe \md(R)$, let $\DD(\M)$ denote the category of modules $X$ such that $\M\hdim X$ is finite.  If $\M$ is resolving, then by Corollary \ref{1.1}, $\DD(\M)=\{X\in\md(R) \mid \zz^{\gg 0} X\in\M\}$.    The next result easily follows from the previous lemma.  
\begin{corollary}
Let $\M$ be resolving.  For any $n$, the set $\{X\in\md(R) \mid \M\hdim X\le n\}$ is resolving.  Furthermore, $\DD(\M)$ is thick, and $\thick(\M)=\DD(\M)$.
\end{corollary}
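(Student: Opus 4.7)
The plan is to verify the three claims in order, each of which reduces to a direct application of the preceding lemma.

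First, to see that $\X_n := \{X \in \md(R) \mid \M\hdim X \le n\}$ is resolving, I would check the three resolving axioms. Containment of $R$ is immediate: since $R$ is projective and $\M$ contains all projectives, $\M\hdim R = 0 \le n$. Closure under direct sums and summands follows from the identity $\M\hdim(X \op Y) = \max\{\M\hdim X, \M\hdim Y\}$ established in the preceding lemma. For the two-out-of-three property, given a short exact sequence $0 \to X \to Y \to Z \to 0$ with $Z \in \X_n$, inequality (\ref{21}) gives $\M\hdim X \le \max\{\M\hdim Y, \M\hdim Z - 1\}$, so $Y \in \X_n \Rightarrow X \in \X_n$, while inequality (\ref{22}) gives the reverse implication.

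Second, since $\DD(\M) = \bigcup_{n \ge 0} \X_n$ is a nested union of resolving subcategories, it is itself resolving; alternatively, the same axioms check directly from (\ref{21}) and (\ref{22}) without any uniform bound. Thickness of $\DD(\M)$ then follows from (\ref{23}): if $0 \to L \to M' \to N \to 0$ is exact with $L, M' \in \DD(\M)$, then $\M\hdim N \le \max\{\M\hdim L, \M\hdim M'\} + 1 < \infty$, placing $N$ in $\DD(\M)$.

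Third, the inclusion $\thick(\M) \sbe \DD(\M)$ is formal: $\DD(\M)$ is a thick subcategory containing $\M$, and $\thick(\M)$ is by definition the smallest such. For the reverse inclusion, I would induct on $n = \M\hdim X$. The base case $n = 0$ gives $X \in \add(\M) \sbe \thick(\M)$. For $n > 0$, choose a short exact sequence $0 \to K \to F \to X \to 0$ with $F$ free; inequality (\ref{21}) applied to this sequence yields $\M\hdim K \le \max\{0, n - 1\} = n - 1$, so $K \in \thick(\M)$ by induction, and the defining property of the thick subcategory $\thick(\M)$ applied to this short exact sequence forces $X \in \thick(\M)$. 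No step is a genuine obstacle; the only point requiring care is invoking thickness (rather than merely the resolving property) in the final induction step, since we are pushing a cokernel, not a kernel, into $\thick(\M)$.
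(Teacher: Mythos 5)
Your proof is correct and is exactly the elaboration the paper has in mind when it says the corollary "easily follows from the previous lemma": each claim is a direct consequence of the three inequalities plus Corollary \ref{1.1}, and your inductive argument for $\DD(\M)\sbe\thick(\M)$ — pushing the cokernel of $0\to \zz X\to F\to X\to 0$ into $\thick(\M)$ via thickness — is the standard route. The only negligible quibble is writing $X\in\add(\M)$ in the base case where one means $X\in\M$, but since $\M$ is resolving these coincide.
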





Through these results, we may construct many resolving and thick subcategories.  It is easy to show that the intersection of a collection of resolving subcategories and the intersection of a collection of thick subcategories are resolving and thick respectively.    The following lemma allows us to construct even more resolving subcategories.  For $\M\sbe\md(R)$, we say $\M_p=\{M_p\mid M\in\M\}$.
\begin{lemma}
Let $R$ and $S$ be rings and $F:\md(R)\to \md(S)$ be an exact functor with $F(R)=S$.  Then for any resolving subcategory $\M\sbe \md(S)$, $F^{-1}(\M)$ is a resolving subcategory of $\md(R)$.
\end{lemma}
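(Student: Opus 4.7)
The plan is to verify the three defining conditions of a resolving subcategory directly for $F^{-1}(\M)$, using the hypotheses on $F$ to transport each condition from $\M$ back across $F$.

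First I would address condition (1): since $F(R)=S$ and $S\in\M$ (every resolving subcategory of $\md(S)$ contains $S$), we have $R\in F^{-1}(\M)$. For condition (2), I would use that an exact functor between abelian categories is automatically additive, so $F(M\oplus N)\cong F(M)\oplus F(N)$. Then $F(M\oplus N)\in\M$ iff $F(M)\oplus F(N)\in\M$, which by the direct-summand closure of $\M$ happens iff both $F(M)$ and $F(N)$ lie in $\M$, i.e.\ iff $M,N\in F^{-1}(\M)$.

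For condition (3), suppose $0\to M\to N\to L\to 0$ is a short exact sequence in $\md(R)$ with $L\in F^{-1}(\M)$. Applying the exact functor $F$ yields a short exact sequence $0\to F(M)\to F(N)\to F(L)\to 0$ in $\md(S)$ with $F(L)\in\M$. The resolving property of $\M$ then gives $F(N)\in\M$ iff $F(M)\in\M$, which translates back to $N\in F^{-1}(\M)$ iff $M\in F^{-1}(\M)$.

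Since there is no real obstacle here—every step is a formal transfer through $F$—the only point worth flagging is the implicit use of additivity of $F$ (to handle direct sums in condition (2)), which is automatic from exactness. The argument does not require anything beyond the hypotheses that $F$ is exact and sends $R$ to $S$.
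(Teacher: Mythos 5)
Your proof is correct. The paper explicitly leaves this lemma's proof to the reader, and your direct verification of the three resolving axioms (using $F(R)=S$ for axiom (1), additivity of the exact functor $F$ for axiom (2), and exactness for axiom (3)) is the natural and intended argument.
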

The proof is elementary and is left to the reader.  Applying this lemma to the localization functor, for any $V\sbe\spec R$, the category of all $M\in \md(R)$ with $M_p$ free for all $p\in V$ is also resolving.  The following lemmas give insight into the behavior of  resolving categories under localization.  The first lemma is from \cite[Lemma 4.8]{Takahashi10} and \cite[Lemma 3.2(1)]{DaoTakahashi12}, and the second is from \cite[Proposition 3.3]{DaoTakahashi13}. 
\begin{lemma}\label{loc}
If $\M$ is a resolving subcategory, then so is $\add\M_\p$ for all $\p\in\spec R$.
\end{lemma}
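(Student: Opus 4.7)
The plan is to verify directly the four equivalent conditions from the Yoshino characterization: (i) $R_\p \in \add\M_\p$, (ii) closure under direct sums and summands, (iii) closure under extensions, and (iv) closure under syzygies. Condition (ii) is immediate from the definition of $\add$, and (i) follows because $R \in \M$ forces $R_\p \in \M_\p \sbe \add\M_\p$, which also gives that every finitely generated free $R_\p$-module lies in $\add\M_\p$. The real content is (iii) and (iv).

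For closure under extensions, suppose we have a short exact sequence $0 \to A \to B \to C \to 0$ in $\md(R_\p)$ with $A,C \in \add\M_\p$. I would first absorb the complementary summands: pick $M,N \in \M$ with $M_\p \cong A \op A'$ and $N_\p \cong C \op C'$, and add trivial sequences to build a new extension of the form
\[
0 \to M_\p \to B \op A' \op C' \to N_\p \to 0.
\]
The heart of the argument is to realize this as a localization of an $R$-extension. Since $R$ is Noetherian and $M,N$ are finitely generated, $\ext^1_R(N,M)$ is finitely generated and its localization at $\p$ is $\ext^1_{R_\p}(N_\p,M_\p)$. So the extension class above has the form $\eta/s$ for some $\eta \in \ext^1_R(N,M)$ and $s \notin \p$; because $s$ is a unit in $R_\p$, $\eta/s$ and $\eta \otimes_R R_\p$ represent isomorphic extensions. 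Choose a representative $0 \to M \to E \to N \to 0$ of $\eta$ in $\md(R)$. Since $\M$ is resolving, $E \in \M$, and localizing gives $E_\p \cong B \op A' \op C' \in \add\M_\p$; taking a summand yields $B \in \add\M_\p$.

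For closure under syzygies, given $A \in \add\M_\p$ write $M_\p \cong A \op A'$ for some $M \in \M$. Since $\M$ is closed under syzygies, $\zz_R M \in \M$, so $(\zz_R M)_\p \in \add\M_\p$. The key point is that localization is exact and sends finitely generated free $R$-modules to finitely generated free $R_\p$-modules, so $(\zz_R M)_\p$ is itself a syzygy of $M_\p$ over $R_\p$. Schanuel's lemma then shows $\zz_{R_\p}M_\p$ differs from $(\zz_R M)_\p$ by a free $R_\p$-summand, while $\zz_{R_\p}(A \op A')$ differs from $\zz_{R_\p}A \op \zz_{R_\p}A'$ by a free summand. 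Combining and using that free $R_\p$-modules lie in $\add\M_\p$, we deduce that $\zz_{R_\p}A$ is a summand of an object of $\add\M_\p$, hence is itself in $\add\M_\p$.

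The main obstacle is the lifting step in (iii): one must argue that every $R_\p$-extension class between localized modules comes, up to isomorphism of extensions, from an $R$-extension. This is where finite generation of $M$ and $N$ over the Noetherian ring $R$ is essential, via the compatibility of $\ext^1$ with localization. Once that is in hand, the other clauses are purely formal manipulations with $\add$ and Schanuel's lemma.
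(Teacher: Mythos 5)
The paper does not give its own proof of this lemma; it cites it directly from Takahashi and Dao--Takahashi. Your proof is correct and follows what is essentially the standard argument in those references: the only nontrivial verifications among the four Yoshino conditions are closure under extensions and syzygies, and the key technical point is exactly the one you identify --- over a Noetherian ring, $\ext^1_R(N,M)_\p \cong \ext^1_{R_\p}(N_\p,M_\p)$ for finitely generated $N$, so every $R_\p$-extension class can be lifted (up to the harmless unit $s^{-1}$, which preserves the isomorphism type of the middle term since pushing out along an automorphism of $M_\p$ is an isomorphism on total spaces) to an $R$-extension, where the resolving property of $\M$ applies. The reduction to a single extension $0\to M_\p\to B\op A'\op C'\to N_\p\to 0$ by absorbing the complementary summands is a clean way to organize this, and the Schanuel argument for syzygies correctly handles the ambiguity up to free summands. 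One small thing worth making explicit if this were written out in full: a finite direct sum of modules in $\M_\p$ is again in $\M_\p$ (since $\M$ is closed under direct sums), so every object of $\add\M_\p$ is a direct summand of a single $M_\p$ with $M\in\M$, which is what you implicitly use when writing $M_\p\cong A\op A'$. With that, the argument is complete.
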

\begin{lemma}\label{local}
The following is equivalent for a resolving subcategory $\M$ and a module $M\in\md(R)$.
\begin{enumerate}
\item $M\in \M$
\item $M_p\in\add\M_p$ for all $p\in\spec R$
\item $M_\mm\in\add\M_\mm$ for all maximal ideals $\mm$.  
\end{enumerate}
\end{lemma}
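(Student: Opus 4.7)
The implications $(1)\Rightarrow(2)\Rightarrow(3)$ will be immediate from the definitions: if $M\in\M$ then $M_\p\in\M_\p\sbe\add\M_\p$ for every prime $\p$, and $(3)$ is simply the restriction of $(2)$ to maximal primes.

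For the substantive direction $(3)\Rightarrow(1)$, my plan is to combine Lemma \ref{loc} (which guarantees that each $\add\M_\mm$ is resolving in $\md(R_\mm)$) with a local-to-global argument for finitely generated modules.  First I would fix a surjection $F\twoheadrightarrow M$ with $F$ finitely generated free, so that $F\in\M$, and let $K$ denote its kernel.  Localising at any maximal $\mm$ and applying the resolving property of $\add\M_\mm$ to the short exact sequence $0\to K_\mm\to F_\mm\to M_\mm\to 0$ shows that $K$ again satisfies hypothesis $(3)$, so the hypothesis propagates down any free resolution of $M$.  This reduction alone is not enough to place $M$ in $\M$, since the third axiom of a resolving subcategory only transports information from the right-hand term of a short exact sequence, not from the kernel.

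To bridge this gap, I would try to exhibit $M$ as a direct summand of a single module in $\M$.  For each maximal $\mm$ the hypothesis yields some $N_\mm\in\M$ of which $M_\mm$ is a summand in $\md(R_\mm)$; by the finite presentation of $M$, the splitting data spreads to a Zariski-open neighborhood of $\mm$, and quasi-compactness of $\spec R$ should reduce the argument to finitely many such neighborhoods.  Taking the direct sum of the corresponding $N_\mm$ (which remains in $\M$ by additive closure) should produce a single module $N\in\M$ with the property that the attempted quotient map $N\to M$ is locally split at every maximal ideal.  If one can then identify these local splittings with elements of $\hm_R(M,N)_\mm\cong\hm_{R_\mm}(M_\mm,N_\mm)$ (valid because $M$ is finitely presented) and assemble them coherently, $M$ becomes a global summand of $N$, and closure of $\M$ under summands delivers $M\in\M$.

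The hard part will be the patching step: locally split surjections do not automatically glue to globally split ones, so the Zariski-local splitting data must be assembled with care; one likely needs to enlarge $N$ (for instance by direct-summing with free modules) to kill the obstructions to globally splitting.  This is exactly the delicate point addressed in \cite[Proposition 3.3]{DaoTakahashi13}, to which the statement refers, and my plan is essentially to follow that route.
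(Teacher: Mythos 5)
The paper does not actually prove this lemma: it is stated immediately after the sentence ``the second is from \cite[Proposition 3.3]{DaoTakahashi13}'' and is simply imported from that reference, so there is no in-paper argument against which to compare yours. On the merits of your proposal: the implications $(1)\Rightarrow(2)\Rightarrow(3)$ and the reduction of $(3)\Rightarrow(1)$ to exhibiting $M$ as a summand of a single module of $\M$ are the right ideas, and the spreading-out and quasi-compactness step is sound (basic opens $D(f)$ are closed under generization, so neighborhoods of the maximal ideals do cover $\spec R$). However, the opening paragraph about propagating the hypothesis down a free resolution is a dead end that plays no role in the argument you actually pursue, and more importantly, the crucial patching step is left entirely open; you explicitly defer it to the very reference you are supposed to be reproving. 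That is the whole content of the implication, so as written there is a genuine gap.

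The missing step can be closed as follows, and it is not what you guessed. With $N\in\M$ chosen so that $M_\mm$ is a summand of $N_\mm$ for every maximal $\mm$, consider the $R$-linear evaluation map $\hm_R(N,M)\otimes_R\hm_R(M,N)\to\hm_R(M,M)$, $g\otimes f\mapsto g\circ f$, and let $I\sbe\hm_R(M,M)$ be its image. Since $M$ and $N$ are finitely presented and $R$ is Noetherian, this construction commutes with localization, so $\id_{M_\mm}\in I_\mm$ for every maximal $\mm$; as $\hm_R(M,M)/I$ is finitely generated, its class of $\id_M$ is locally zero at every maximal ideal and hence zero, giving $\id_M=\sum_{i=1}^{k}g_if_i$ globally. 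Packaging $F=(f_1,\dots,f_k)\colon M\to N^{k}$ and $G=\sum g_i\pi_i\colon N^{k}\to M$ yields $GF=\id_M$, so $M$ is a summand of $N^{k}\in\M$. In particular the correct enlargement of $N$ is by copies of $N$ itself, not by free summands, which would not repair the obstruction. Note also that this argument uses only closure of $\M$ under finite direct sums and summands, not the resolving axioms, which is another indication that the syzygy detour at the start is irrelevant.
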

Recall the definition of $\Ld$ and $\Ga$ from the introduction.  These lemmas show that if $\M$ is resolving, then for all $f\in\Gamma$, $\Ld_\M(f)$ is a resolving subcategory.
\begin{corollary}
Set
\[\Ld_\M(f)=\{M\in\md(R)\mid \add \M_p\hdim M_p\le f(p)\quad  \forall p\in\spec R\}.\]
If $\M$ is resolving, then for all $f\in\Gamma$, $\Ld_\M(f)$ is a resolving subcategory.
\end{corollary}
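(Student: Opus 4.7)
The plan is to verify the three defining conditions of a resolving subcategory---containing $R$, closure under direct summands in both directions, and the two-out-of-three property for short exact sequences whose third term is in the subcategory---one at a time, with every argument carried out pointwise at each $p \in \spec R$. The essential tool is Lemma \ref{loc}, which says that for every prime $p$ the category $\add \M_p \sbe \md(R_p)$ is itself resolving. Consequently the direct-sum dimension formula and the three short-exact-sequence inequalities already proved for resolving subcategories apply to $\add \M_p\hdim$ inside $\md(R_p)$, and each clause in the definition of $\Ld_\M(f)$ reduces to a statement about these pointwise dimensions.

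Containment of $R$ is immediate: since $R \in \M$, one has $R_p \in \add \M_p$ and hence $\add \M_p\hdim R_p = 0 \le f(p)$ for every $p$. For the summand condition, exactness of localization gives $(X \op Y)_p = X_p \op Y_p$, and the direct-sum formula for the resolving subcategory $\add \M_p$ yields
\[\add \M_p\hdim (X_p \op Y_p) \;=\; \max\{\add \M_p\hdim X_p,\ \add \M_p\hdim Y_p\}.\]
This maximum is $\le f(p)$ for all $p$ if and only if each of $\add \M_p\hdim X_p$ and $\add \M_p\hdim Y_p$ is $\le f(p)$ for all $p$, so $X \op Y \in \Ld_\M(f)$ if and only if both $X,Y \in \Ld_\M(f)$.

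For the short-exact-sequence condition, given an exact sequence $0 \to X \to Y \to Z \to 0$ with $Z \in \Ld_\M(f)$, exactness of localization produces $0 \to X_p \to Y_p \to Z_p \to 0$ in $\md(R_p)$. If in addition $X \in \Ld_\M(f)$, inequality (\ref{22}) applied to the resolving subcategory $\add \M_p$ gives
\[\add \M_p\hdim Y_p \le \max\{\add \M_p\hdim X_p,\ \add \M_p\hdim Z_p\} \le f(p),\]
so $Y \in \Ld_\M(f)$. If instead $Y \in \Ld_\M(f)$, inequality (\ref{21}) gives
\[\add \M_p\hdim X_p \le \max\{\add \M_p\hdim Y_p,\ \add \M_p\hdim Z_p - 1\} \le f(p),\]
so $X \in \Ld_\M(f)$. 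This verifies the last axiom.

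There is no real obstacle; the statement is a formal consequence of Lemma \ref{loc} together with the earlier dimension inequalities, and the argument amounts to localization bookkeeping. It is worth flagging that only the codomain $\NN$ of $f$ is used---neither the monotonicity nor the grade bound built into the definition of $\Ga$ enters here. Those hypotheses are reserved for the later results identifying $\Phi_\M$ and $\Ld_\M$ as mutually inverse bijections, not for merely seeing that $\Ld_\M$ takes values in the class of resolving subcategories.
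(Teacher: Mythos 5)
Your proof is correct and it is, in substance, the argument the paper leaves implicit: localize, note by Lemma \ref{loc} that $\add\M_p$ is resolving in $\md(R_p)$, and then run the earlier dimension inequalities pointwise to verify the three axioms. Your side remark that only $f\colon\spec R\to\NN$ is used (and none of the structure defining $\Ga$) is also accurate and worth keeping in mind when reading the later results.

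One small observation on packaging rather than correctness: the paper has already established two facts that together give the corollary in a single line, namely that $\{X\mid \N\hdim X\le n\}$ is resolving whenever $\N$ is resolving, and that preimages of resolving subcategories under exact functors $F$ with $F(R)=S$ are resolving. Since localization at $p$ is such a functor, $\Ld_\M(f)=\bigcap_p\,(-)_p^{-1}\bigl(\{Y\in\md(R_p)\mid \add\M_p\hdim Y\le f(p)\}\bigr)$ is an intersection of resolving subcategories and hence resolving. Your direct verification unrolls exactly this reasoning; the intersection-of-preimages phrasing is just a more economical way to say the same thing.
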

Let $\MCM$ denote the category of maximal Cohen-Macaulay modules.  As noted earlier, when $R$ is Cohen-Macaulay, $\MCM$ is resolving.  Furthermore, letting $d=\dim R$, $\zz^d M$ is in $\MCM$ for every $M\in\md(R)$.  Hence, $\DD(\MCM)=\md(R)$.  The following shows that dimension with respect to $\MCM$ is very computable.
\begin{lemma}\label{dimension}
Suppose $\M\sbe \N$ are resolving subcategories.  Then $\M$ is thick in $\N$ if and only if for any module  $X\in\DD(\M)$, we have $\M\hdim X=\N\hdim X$.  Furthermore, if $R$ is Cohen-Macaulay,  $\M$ is a thick subcategory of $\MCM$ if and only if dimension with respect to $\M$ satisfies the Auslander Buchsbaum Formula, i.e. for all $X\in\DD(\M)$  we have 
\[\M\hdim X+\depth X=\depth R.\]
\end{lemma}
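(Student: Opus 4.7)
The plan is to prove each equivalence by translating between the intrinsic (short-exact-sequence) notion of thickness and the dimension-theoretic data, with a syzygy-descent argument as the main technical tool. For the forward direction of the first equivalence, fix $X \in \DD(\M)$ and set $n = \N\hdim X$; the inclusion $\M \sbe \N$ gives $\N\hdim X \le \M\hdim X$ automatically, so it suffices to reverse this. By Corollary \ref{1.1} applied to $\N$, we have $\zz^n X \in \N$, and since $\N$ is closed under syzygies also $\zz^j X \in \N$ for all $j \ge n$. Because $X \in \DD(\M)$, Corollary \ref{1.1} provides some $k \ge n$ with $\zz^k X \in \M$. Now descend inductively from $j = k$ down to $j = n$: the short exact sequence $0 \to \zz^{j+1} X \to F_j \to \zz^j X \to 0$ (with $F_j$ free) has its first two terms in $\M$ and its third in $\N$, so thickness of $\M$ in $\N$ forces $\zz^j X \in \M$. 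Hence $\zz^n X \in \M$, and Corollary \ref{1.1} gives $\M\hdim X \le n$.

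For the converse of the first equivalence, assume dimensions agree on $\DD(\M)$ and let $0 \to L \to M \to N \to 0$ be exact with $L, M \in \M$ and $N \in \N$. Item (\ref{23}) of the preceding lemma yields $\M\hdim N \le 1$, so $N \in \DD(\M)$. Since $N \in \N$ gives $\N\hdim N = 0$, the hypothesis forces $\M\hdim N = 0$, i.e.\ $N \in \M$, establishing thickness.

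The second equivalence rests on the Auslander--Buchsbaum formula for the MCM-dimension over a Cohen--Macaulay local ring, namely $\MCM\hdim X + \depth X = \depth R$ for every $X \in \md(R)$. This can be established by induction on $n = \MCM\hdim X$ via the depth lemma applied to $0 \to \zz X \to M_0 \to X \to 0$ with $M_0 \in \MCM$, using that any non-MCM module over a CM local ring has depth strictly less than $\depth R$. Granting this identity, if $\M$ is thick in $\MCM$ then the first equivalence gives $\M\hdim X = \MCM\hdim X$ for every $X \in \DD(\M) \sbe \md(R) = \DD(\MCM)$, which combines with the formula to yield the AB identity for $\M$. Conversely, if the AB identity holds for $\M$, then each $X \in \M$ has $\depth X = \depth R$ and is therefore MCM, so $\M \sbe \MCM$; and on $\DD(\M)$ both $\M\hdim X$ and $\MCM\hdim X$ equal $\depth R - \depth X$, so the first equivalence delivers thickness of $\M$ in $\MCM$.

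The main obstacle is making the syzygy-descent in the first direction go through cleanly: one must verify at each intermediate stage that $\zz^j X$ already lies in $\N$ before invoking thickness, which hinges on $\N$ being closed under syzygies. A secondary point is the AB formula for $\MCM$-dimension; while standard, it is the inductive bookkeeping via the depth lemma that ultimately bridges the homological identity to the thickness statement.
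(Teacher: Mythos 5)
Your proof is correct and follows essentially the same route as the paper: derive one inequality from $\M\sbe\N$, use thickness to descend (the paper breaks an $\M$-resolution at the $\N\hdim$-spot and you descend through the syzygy tower of a free resolution, but these are the same idea), prove the converse via the $\M\hdim N\le 1$ observation, and reduce the Cohen–Macaulay statement to the first equivalence via the Auslander–Buchsbaum formula for $\MCM$-dimension. Your write-up of the final step (showing $\M\sbe\MCM$ by applying the formula to $X\in\M$ rather than to $X\in\DD(\M)\cap\MCM$) is in fact cleaner than the paper's phrasing of the same point.
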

\begin{proof}
Suppose $M$ is thick in $N$ and  $X\in\DD(\M)$.  Then we may write  $0\to M_d\to\cdots\to M_0\to X\to 0$ with $M_i\in\M$ and $d=\M\hdim X$.  Since each $M_i$ is also in $\N$, we have $\N\hdim X\le d$.  Setting $e=\N\hdim X\le d$, by Corollary \ref{1.1}, there exists a $N\in\N$ such that 
\[0\to M_d\to \cdots\to M_e\to N\to 0\quad\quad\quad0\to N\to M_{e-1}\to\cdots\to M_0\to X\to 0\]
are exact.  However, since $\M$ is thick in $\N$, $N$ is also in $\M$, which implies that $e=d$, proving the only if part of the statement.

Now suppose that $\M\hdim X=\N\hdim X$ for all $X\in\DD(M)$.  Now suppose $0\to L\to M\to N\to 0$ is exact with $L,M\in\M$ and $N\in\N$.  Then $N\in\DD(\M)$ and $\M\hdim N=\N\hdim N=0$.  Therefore $N\in \M$, and so $\M$ is thick in $\N$.  

Assume $R$ is Cohen-Macaulay.  Let $\M$ be a resolving subcategory whose dimension satisfies the Auslander Buchsbaum formula.   Then for any module  $M\in\DD(\M)\cap \MCM$,  we have 
\[\depth R=\M\hdim M+\depth M=\M\hdim M+\depth R.\]
Thus $\M\hdim M=0$ forcing $M$ to be in $\M$.  Hence $\M$ is contained in $\MCM$.  

By what we have proved so far, it suffices to show that dimension with respect to $\MCM$ satisfies the Auslander  Buchsbaum formula.  But this follows from Corollary \ref{1.1}.
%
\end{proof}
Recall the definition of $\Phi$ and $\Ga$ from the introduction.  If dimension with respect to $\add \M_p$ satisfies the Auslander Buchsbaum formula for all $p\in\spec R$, then for all $\X\sbe \DD(\M)$, $\Phi_\M(\X)$ is in $\Gamma$.  
Before proceeding, we need one more definition and a result.  
\begin{definition}
Let $\A\sbe\M$.  We say $\A$ cogenerates $\M$, if for every $M\in\M$, there exists an exact sequence 
$0\to M\to A\to M'\to 0$
with $M'\in\M$ and $A\in\A$.
\end{definition}
The following is an important theorem from \cite[Theorem 1.1]{AuslanderBuchweitz89}.
\begin{theorem}\label{hull}
Suppose $\A$ and $\M$ are resolving with $\A\sbe\M$. If $\A$ cogenerates $\M$, then for every $X\in\DD(\M)$ with $\M\hdim X=n$, there exists an $A\in\DD(\A)$ and $M\in\M$ such that $\A\hdim A=n$ and $0\to X\to A\to M\to 0$ is exact. 
\end{theorem}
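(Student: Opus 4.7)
The plan is to induct on $n=\M\hdim X$. The base case $n=0$ is immediate: if $X\in\M$, then cogeneration directly yields an exact sequence $0\to X\to A\to M\to 0$ with $A\in\A$ (so $\A\hdim A=0$) and $M\in\M$.

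For the inductive step, I would take any surjection $M_0\twoheadrightarrow X$ from a projective $M_0$, giving $0\to K\to M_0\to X\to 0$ with $M_0\in\M$. Inequalities (\ref{21}) and (\ref{23}) applied to this sequence, together with $\M\hdim M_0=0$ and $\M\hdim X=n$, pin down $\M\hdim K=n-1$. The induction hypothesis then supplies an exact sequence
\[0\to K\to A_K\to M_K\to 0\]
with $A_K\in\DD(\A)$, $\A\hdim A_K=n-1$, and $M_K\in\M$. Next, push out the map $K\to M_0$ along $K\to A_K$ to obtain a commutative diagram with bottom row $0\to A_K\to P\to X\to 0$ and middle column $0\to M_0\to P\to M_K\to 0$. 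Since $\M$ is closed under extensions, $P\in\M$, so cogeneration applied to $P$ produces $0\to P\to A_P\to M_P\to 0$ with $A_P\in\A$ and $M_P\in\M$.

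Using the composite injection $A_K\hookrightarrow P\hookrightarrow A_P$, set $Y:=A_P/A_K$. Then one has short exact sequences
\[0\to A_K\to A_P\to Y\to 0\quad\text{and}\quad 0\to X\to Y\to M_P\to 0,\]
where the second comes from $X=P/A_K\hookrightarrow A_P/A_K=Y$ with quotient $A_P/P=M_P$. The main obstacle is showing $\A\hdim Y=n$ on the nose. Inequality (\ref{23}) applied to the first sequence gives $\A\hdim Y\le\max\{n-1,0\}+1=n$. For the reverse bound, the inclusion $\A\sbe\M$ yields $\M\hdim Y\le\A\hdim Y$, and (\ref{21}) applied to the second sequence gives $n=\M\hdim X\le\max\{\M\hdim Y,\M\hdim M_P-1\}=\M\hdim Y$ since $\M\hdim M_P=0$. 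Taking $A:=Y$ and $M:=M_P$ completes the construction.
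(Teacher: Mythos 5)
Your proof is correct and is essentially the classical Auslander--Buchweitz induction for constructing the hull, which the paper does not reproduce but simply cites as \cite[Theorem 1.1]{AuslanderBuchweitz89}. The key points all check out: $\M\hdim K=n-1$ follows from Corollary \ref{1.1} (or the stated inequalities); the pushout gives $P\in\M$ as an extension of $M_0$ by $M_K$; the filtration $A_K\sbe P\sbe A_P$ produces both $0\to A_K\to A_P\to Y\to 0$ and $0\to X\to Y\to M_P\to 0$; inequality (\ref{23}) on the first gives $\A\hdim Y\le n$, while $\A\sbe\M$ together with inequality (\ref{21}) on the second gives $n=\M\hdim X\le\M\hdim Y\le\A\hdim Y$, pinning $\A\hdim Y=n$ and in particular $Y\in\DD(\A)$.
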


\section{Preliminaries: Semidualizing Modules}\label{beta}
We fix a module $C\in\md(R)$ and write $M^\da=\hm(M,C)$. 
\begin{definition}\label{tot}
A finitely generated module $X$ is totally $C$-reflexive if it satisfies the following.
\begin{enumerate}
\item\label{31} $\ext^{>0}(X,C)=0$
\item\label{32}  $\ext^{>0}(X^\da,C)=0$
\item\label{33} The natural homothety map $\eta_X:X\to X^{\da\da}$  defined by $\mu\mapsto (\ph\mapsto\ph(\mu))$ is an isomorphism.  
\end{enumerate}
Let  $\G_C$ denote the category of totally $C$-reflexive modules.
\end{definition}
The set $\G_C$ is essentially the subcategory over which $\da$ is a dualizing functor.  The notion of totally $C$-reflexivity  generalizes Gorenstein dimension zero.  In fact, when $C=R$, $\G_R$ is simply the category of Gorenstein dimension zero modules, which are also known as totally reflexive modules.  See \cite{Masek99} for further information on the subject.  The following proposition shows us that $\G_C$ is almost resolving.
\begin{lemma}
The set $\G_C$ is closed under direct sums, summands, and extensions.  
\end{lemma}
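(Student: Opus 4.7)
The plan is to verify each of the three conditions in Definition \ref{tot}, namely vanishing of $\ext^{>0}(-,C)$, vanishing of $\ext^{>0}((-)^\da,C)$, and the homothety map being an isomorphism, against each of the three closure properties (sums, summands, extensions). The arguments split into two very different flavors: sums/summands are formal consequences of the additivity of $\hm(-,C)$, $\ext^i(-,C)$ and $\eta_{(-)}$, while extensions require a long-exact-sequence plus five-lemma argument.

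For sums and summands, I would simply observe that $(X\op Y)^\da \cong X^\da \op Y^\da$, that $\ext^i(X\op Y, C) \cong \ext^i(X,C)\op \ext^i(Y,C)$ for every $i$, and that under these identifications the natural homothety map $\eta_{X\op Y}$ factors as $\eta_X \op \eta_Y$. From this, conditions (\ref{31}), (\ref{32}), (\ref{33}) for $X\op Y$ are equivalent to the conjunction of these conditions for $X$ and $Y$, which gives closure under both direct sums and summands simultaneously.

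The main content is closure under extensions. Let $0 \to X \to Y \to Z \to 0$ be a short exact sequence with $X,Z \in \G_C$. Apply $\hm(-,C)$ to obtain a long exact sequence; using $\ext^{>0}(X,C)=0=\ext^{>0}(Z,C)$ gives $\ext^{>0}(Y,C)=0$ and a short exact sequence
\[0 \to Z^\da \to Y^\da \to X^\da \to 0.\]
Apply $\hm(-,C)$ again; using $\ext^{>0}(X^\da,C)=0=\ext^{>0}(Z^\da,C)$ gives $\ext^{>0}(Y^\da,C)=0$ and a short exact sequence $0\to X^{\da\da}\to Y^{\da\da}\to Z^{\da\da}\to 0$. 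Naturality of $\eta$ produces a commutative ladder
\[\xymatrix{
0 \ar[r] & X \ar[r] \ar[d]^{\eta_X} & Y \ar[r] \ar[d]^{\eta_Y} & Z \ar[r] \ar[d]^{\eta_Z} & 0 \\
0 \ar[r] & X^{\da\da} \ar[r] & Y^{\da\da} \ar[r] & Z^{\da\da} \ar[r] & 0
}\]
in which the outer verticals are isomorphisms by hypothesis, so the five lemma forces $\eta_Y$ to be an isomorphism as well, and $Y \in \G_C$.

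The only real subtlety is the double application of $\hm(-,C)$ in the extension case: the first application uses the vanishing hypotheses on $X$ and $Z$, and it delivers both the vanishing $\ext^{>0}(Y,C)=0$ and a short exact sequence of duals that is itself suitable for a second application using the vanishing hypotheses on $X^\da$ and $Z^\da$. Once the two short exact sequences are in hand, everything else is mechanical.
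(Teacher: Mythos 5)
Your proof is correct and follows essentially the same route as the paper: a brief additivity argument for sums and summands, then for extensions two applications of $\hm(-,C)$ (using the vanishing of $\ext^{>0}$ on $X$, $Z$ and then on $X^\da$, $Z^\da$) to produce the two short exact sequences of duals and double duals, followed by naturality of $\eta$ and the five lemma. The paper's proof is terser, leaving most of these verifications as "easy to check," but the underlying argument is identical.
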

\begin{proof}
It is easy to show that $\G_C$ is closed under direct sums and direct summands.  Suppose we have
\[0\to X\to Y\to Z\to 0\]
with  $X,Z\in\G_C$. It is easy to check that $Y$ satisfies condition \ref{31} of Definition \ref{tot}.  We have 
\[0\to Z^\da\to Y^\da\to X^\da\to 0\quad\quad\quad0\to X^{\da\da}\to Y^{\da\da}\to Z^{\da\da}\to 0.\]
From the first exact sequence, it is easy to see that $Y$ satisfies condition \ref{32} of Definition \ref{tot}.  We can then use the five lemma to show that $Y$ satisfies condition \ref{33} of Definition \ref{tot}.
\end{proof}
In general, $\G_C$ will not be resolving. For example, if $C=R/xR$ for a regular element $x\in R$, $\ext^1(R/xR,R/xR)=R/xR\ne 0$. So $R$ cannot be in  $\G_{R/xR}$, and thus $\G_{R/xR}$ cannot be resolving. It is clear from the definition that $R\in\G_C$ is a necessary condition for $\G_C$ to be resolving.  In fact, this condition is sufficient.  
\begin{prop}
The set $\G_C$ will be resolving if and only if $\G_C$ contains $R$.  
\end{prop}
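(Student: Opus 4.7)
The plan is to prove the nontrivial direction: assuming $R \in \G_C$, we show $\G_C$ is resolving. The ``only if'' direction is immediate, since any resolving subcategory contains $R$ by definition. Using the equivalent list of resolving axioms recalled in Section \ref{alpha}, and combining with the preceding lemma, what remains is to verify (a) every finitely generated projective lies in $\G_C$, and (b) $\G_C$ is closed under syzygies. Item (a) follows at once from $R \in \G_C$ together with closure under direct sums and summands from the preceding lemma.

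Before attacking (b), I would extract from the hypothesis $R \in \G_C$ the two structural facts that will drive everything. Condition \ref{32} of Definition \ref{tot} applied to $R$ gives $\ext^{>0}(R^\da,C) = \ext^{>0}(C,C) = 0$, and condition \ref{33} gives that the homothety map $R \to \hm(C,C)$ is an isomorphism. In particular, for every free module $F$, one has $\ext^{>0}(F^\da,C)=0$ and $\eta_F:F \to F^{\da\da}$ is an isomorphism.

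For (b), fix $X \in \G_C$ and a short exact sequence $0 \to \Omega X \to F \to X \to 0$ with $F$ free; I need to check the three defining conditions for $\Omega X$. Applying $\hm(-,C)$ to this sequence and using $\ext^{>0}(X,C)=0=\ext^{>0}(F,C)$ yields $\ext^{>0}(\Omega X, C)=0$, which is condition \ref{31}; moreover the same long exact sequence degenerates to a short exact sequence
\[0 \to X^\da \to F^\da \to (\Omega X)^\da \to 0.\]
Now apply $\hm(-,C)$ once more. Since $\ext^{>0}(X^\da,C)=0$ (from $X\in\G_C$) and $\ext^{>0}(F^\da,C)=0$ (from the previous paragraph), a chase through the resulting long exact sequence together with the fact that $\eta_F$ and $\eta_X$ are isomorphisms shows both that the induced map $F^{\da\da}\to X^{\da\da}$ is surjective and that $\ext^{>0}((\Omega X)^\da,C)=0$, establishing condition \ref{32}.

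Finally, the surjectivity just obtained gives a short exact sequence $0 \to (\Omega X)^{\da\da} \to F^{\da\da} \to X^{\da\da} \to 0$ which fits into a commutative ladder with the original sequence via the homothety maps. Since $\eta_F$ and $\eta_X$ are isomorphisms, the five lemma forces $\eta_{\Omega X}$ to be an isomorphism as well, which is condition \ref{33}. Thus $\Omega X \in \G_C$, and $\G_C$ is resolving. The main subtlety is recognising that $R \in \G_C$ forces $C$ to be semidualizing in the sense needed to make $F^\da$ behave like a ``projective'' object under $\hm(-,C)$; once that is in hand, the syzygy closure reduces to a standard double-dual diagram chase.
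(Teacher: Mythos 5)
Your proof is correct. Both directions are handled soundly: the trivial ``only if'' direction, and the double-dualization argument for syzygy closure. The key diagram chase --- applying $\hm(-,C)$ twice, using that $\ext^{>0}(F^\da,C)=0$ follows from $R\in\G_C$, and invoking the five lemma on the commutative ladder with the homothety maps --- is valid and carefully tracked.

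Your decomposition is slightly different from the paper's. You verify Yoshino's four-condition characterization of resolving subcategories (projectives, $\add$-closure, extensions, syzygies), so what remains after the preceding lemma is precisely syzygy closure, i.e.\ the case of a short exact sequence $0\to\Omega X\to F\to X\to0$ with $F$ free. The paper instead works directly with the original three-axiom definition, so what remains is the more general statement: if $0\to X\to Y\to Z\to 0$ is exact with $Y,Z\in\G_C$, then $X\in\G_C$. The core argument --- dualize, use vanishing of $\ext$ against $C$ to get the short exact sequence of duals, then the five lemma on the double-dual ladder --- is the same in both. Your route is marginally cleaner because for free $F$ the vanishing $\ext^{>0}(F^\da,C)=0$ is immediate from the hypothesis $R\in\G_C$, whereas the paper extracts the analogous vanishing from the assumption $Y\in\G_C$; but this is a cosmetic difference. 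Each buys the same conclusion with essentially the same labor, the paper's being self-contained against its original definition and yours leaning on the equivalent axiomatization that the paper records just above.
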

\begin{proof}
If $\G_C$ is resolving, by definition it contains $R$, so we prove the converse. So suppose $R$ is in $\G_C$.  In light of the last lemma, we need only to prove that if   $0\to X\to Y\to Z\to 0$ is exact with $Y,Z\in\G_C$, then $X$ is in $\G_C$ as well.  Since $Y$ and $Z$  satisfy condition (\ref{31}) of Definition \ref{tot}, it is easy to show that $X$ does too.  Also, since $\ext^1(Z,C)=0$, we have 
\[0\to Z^\da\to Y^\da\to X^\da\to 0.\]
Hence, we have the following commutative diagram with exact rows.
\[\xymatrix{
& 0	\ar[r]	& X	\ar[r] \ar[d]^{\eta_X}			& Y	\ar[r] \ar[d]^{\eta_Y}		& Z	\ar[r] \ar[d]^{\eta_Z}		& 0					& 		\\
& 0	\ar[r]	& X^{\da\da}	\ar[r]				& Y^{\da\da}	\ar[r]			& Z^{\da\da}	\ar[r]			& \ext^1(X^\da,C)	\ar[r] & 0\\
}\]
Since $\eta_Y$ and $\eta_Z$ are isomorphisms, the five lemma shows that $\eta_X$ is too, and that $\ext^1(X^\da,C)=0$.  Thus $X$ satisfies condition (\ref{33}) of Definition \ref{tot}.  It is easy to check using the first exact sequence that $\ext^{>1}(X^\da,C)=0$, showing that $X$ satisfies condition (\ref{22}) of Definition \ref{tot}.  
\end{proof}
Motivated by this proposition, we say that a module, $C$, is semidualizing if $R$ is in $\G_C$.  This is easily seen to be equivalent to the following definition which is standard in the literature.
\begin{definition}\label{sdm}
A module $C$ is semidualizing if $\ext^{>0}(C,C)=0$ and $R\cong \hm(C,C)$ via the map $r\mapsto (c\mapsto rc)$.
\end{definition}
For the remainder of the paper, we will let $C$ denote a semidualizing module.  Semidualizing modules were first discovered by Foxby in \cite{Foxby72} and were later rediscovered in different guises  by various authors, including Vasconcoles in  \cite{Vasconcelos74}, who called them spherical modules, and Golod, who called them suitable modules.  For an excellent treatment of the general theory of semidualizing modules, see \cite{Sather-Wagstaff09b}.  Examples of semidualizing modules include $R$ and dualizing modules.  If $R$ is Cohen-Macaulay and $D$ is a dualizing module, then $\G_D$ is simply $\MCM$.   Dimension with respect to $\G_C$ is often called Gorenstein $C$-dimension, or $\mbox{G}_C$-dimension for short, since it is a generalization of Gorenstein dimension.    We would expect $\mbox{G}_C$ and Gorenstein dimension to have similar properties.   Thus we have the following lemma, which is an easy exercise, and proposition, which is from \cite[Theorem 1.22]{Gerko01}. 
\begin{lemma}\label{cdim}
If $X\in\DD(\G_C)$, then $\G_C\hdim X=\min\{n\mid \ext^{>n}(X,C)=0\}$.  
\end{lemma}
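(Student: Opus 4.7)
The plan is to prove $d = e$, where $d := \G_C\hdim X$ and $e := \min\{n \mid \ext^{>n}(X,C) = 0\}$, by a two-sided inequality. For the inequality $e \le d$, I would take a $\G_C$-resolution $0 \to G_d \to \cdots \to G_0 \to X \to 0$, split it into short exact sequences, and iterate dimension shifting against $C$. Since $\ext^{>0}(G_i, C) = 0$ for each $i$, this yields $\ext^{i+d}(X, C) \cong \ext^i(G_d, C) = 0$ for $i > 0$, and hence $e \le d$.

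For the harder direction $d \le e$, my goal is to show $\zz^e X \in \G_C$, after which Corollary \ref{1.1} gives $d \le e$. First, dimension shifting along a free resolution $\cdots \to F_1 \to F_0 \to X \to 0$ yields $\ext^i(\zz^e X, C) \cong \ext^{i+e}(X, C) = 0$ for $i > 0$, so condition (1) of Definition \ref{tot} is already in place. Second, since $X \in \DD(\G_C)$, Corollary \ref{1.1} furnishes some $n \ge e$ with $\zz^n X \in \G_C$. I then plan to propagate totally $C$-reflexivity downward along the short exact sequences $0 \to \zz^{k+1} X \to F_k \to \zz^k X \to 0$ from $k = n-1$ down to $k = e$.

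The key tool is a cosyzygy closure lemma for $\G_C$: if $0 \to A \to B \to Y \to 0$ is exact with $A, B \in \G_C$ and $\ext^{>0}(Y, C) = 0$, then $Y \in \G_C$. To prove this, I verify the three defining conditions. Condition (1) is the hypothesis. For condition (2), the vanishing $\ext^1(Y, C) = 0$ gives an exact sequence $0 \to Y^\da \to B^\da \to A^\da \to 0$, and a further dimension shift (using $\ext^{>0}(A^\da, C) = \ext^{>0}(B^\da, C) = 0$) yields $\ext^{>0}(Y^\da, C) = 0$. For condition (3), dualizing this sequence once more (using $\ext^1(A^\da, C) = 0$) produces $0 \to A^{\da\da} \to B^{\da\da} \to Y^{\da\da} \to 0$, and a five-lemma chase on the naturality square, with $\eta_A$ and $\eta_B$ isomorphisms, forces $\eta_Y$ to be one too. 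Applying this lemma iteratively along the resolution data above, each $\zz^k X$ for $k = n-1, n-2, \ldots, e$ lies in $\G_C$, so in particular $\zz^e X \in \G_C$. The main obstacle I expect is setting up the cosyzygy closure lemma cleanly: it uses $\ext^1(Y, C) = 0$ precisely to keep $(-)^\da$ exact on the relevant short exact sequences, so the Ext-vanishing hypothesis is sharp; once the lemma is in hand, the downward induction along the free resolution is routine.
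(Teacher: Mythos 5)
The paper leaves this lemma as an exercise without supplying a proof, so there is nothing to compare against directly; your argument is correct. The inequality $e \le d$ by dimension shifting along a $\G_C$-resolution is routine, and your cosyzygy closure lemma (if $0 \to A \to B \to Y \to 0$ is exact with $A, B \in \G_C$ and $\ext^{>0}(Y,C)=0$, then $Y \in \G_C$) is the right key input: condition (2) for $Y$ follows from the long exact $\ext(-,C)$-sequence applied to $0 \to Y^\da \to B^\da \to A^\da \to 0$, using $\ext^{>0}(A^\da,C) = \ext^{>0}(B^\da,C) = 0$, and condition (3) follows by the five lemma once $\ext^1(A^\da,C)=0$ makes $0 \to A^{\da\da} \to B^{\da\da} \to Y^{\da\da} \to 0$ exact. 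The downward induction from $\zz^n X$ (furnished by Corollary \ref{1.1} together with the already-established $e \le d$, so one may take $n=d$) then lands cleanly on $\zz^e X \in \G_C$, giving $d \le e$ by Corollary \ref{1.1} again. One small remark: you never need the full closure of $\G_C$ under $\da$, only condition (2) of Definition \ref{tot} for $A$ and $B$, which is what actually supplies the $\ext$-vanishing you invoke; this keeps the argument self-contained relative to the definitions in Section \ref{beta}.
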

\begin{prop}\label{ABform}
For any semidualizing module $C$,  $\mbox{G}_C$-dimension satisfies the Auslander Buchsbaum formula, i.e. for any module $X\in\DD(\G_C)$, we have 
\[\G_C\hdim X+\depth X=\depth R.\]
\end{prop}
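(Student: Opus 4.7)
The plan is to reduce to the local case and combine two steps: the base case $\G_C\hdim X = 0$ and an induction on $\G_C\hdim X$ using the depth lemma. Since both $\G_C\hdim$ and $\depth$ localize well (via Lemma \ref{loc} and standard depth-localization facts), I may assume $(R, \mm)$ is a Noetherian local ring.

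For the base case, the key auxiliary input is the standard fact $\depth C = \depth R$ for any semidualizing module $C$, which follows from $R \cong \hm(C, C)$, $\ext^{>0}(C, C) = 0$, and Ischebeck-type depth-sensitivity arguments for Hom. Given $X \in \G_C$, I would apply $\hm(-, C)$ to a free resolution of $X^\da$ to obtain, via conditions \ref{32} and \ref{33} of Definition \ref{tot}, the coresolution
\[0 \to X \to C^{r_0} \to C^{r_1} \to \cdots\]
Splitting into short exact sequences $0 \to X_i \to C^{r_i} \to X_{i+1} \to 0$ with each $X_i \in \G_C$, and iterating the depth lemma sufficiently many times while using $\depth C^{r_i} = \depth R$, yields the lower bound $\depth X \geq \depth R$. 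The upper bound $\depth X \leq \depth R$ is the technical heart and is handled in \cite[Theorem 1.22]{Gerko01} via local-cohomology arguments applied to the complete $C$-resolution of $X$.

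For the inductive step, with $n = \G_C\hdim X \geq 1$, take a short exact sequence $0 \to K \to F \to X \to 0$ with $F$ free. By Corollary \ref{1.1}, $\G_C\hdim K = n - 1$, so by the inductive hypothesis $\depth K = \depth R - n + 1$. For $n \geq 2$, where $\depth K < \depth F = \depth R$, the first and third depth-lemma inequalities together pin down $\depth X = \depth R - n$. The remaining case $n = 1$ (where $\depth K = \depth F = \depth R$ and the depth lemma gives only $\depth X \geq \depth R - 1$) is handled by invoking Lemma \ref{cdim} to reduce $\G_C\hdim X = 1$ to the Ext-vanishing statement $\ext^1(X, C) \neq 0$, $\ext^{>1}(X, C) = 0$, and then arguing on the long exact sequence of $\ext^*(-, C)$ attached to an Auslander-Buchweitz approximation (Theorem \ref{hull}) of $X$.

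The main obstacle is the upper bound $\depth X \leq \depth R$ in the base case. The lower bound falls out cleanly from the depth lemma applied to the $C$-coresolution, but the reverse inequality requires a more subtle local-cohomology or Matlis-duality computation relating $\ext^i(k, X) = \ext^i(k, \hm(X^\da, C))$ to the depths of $X^\da$ and $C$ and exploiting the total $C$-reflexivity of $X^\da$. This is the technical core that I would ultimately cite from \cite{Gerko01} rather than reprove.
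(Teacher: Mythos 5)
The paper gives no proof of Proposition \ref{ABform}; it is quoted verbatim from \cite[Theorem 1.22]{Gerko01}, so there is no argument to compare against line by line.

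Your proposal is a reasonable outline, but it is worth noticing that it does not actually relieve the dependence on \cite{Gerko01}: the step you flag as ``the technical heart'' (the bound $\depth X\le\depth R$ for $X\in\G_C$) is also exactly what Gerko's Theorem~1.22 establishes, and in fact Gerko's theorem already gives the full Auslander--Buchsbaum formula for all $X$ of finite $\G_C$-dimension, not just the base case. So the reduction-plus-induction scaffolding you build is correct but redundant once that citation is invoked. Your lower bound in the base case is fine: iterating the depth lemma along the $\add C$-coresolution $0\to X\to C^{r_0}\to C^{r_1}\to\cdots$ (which uses $\depth C=\depth R$) does give $\depth X\ge\depth R$, and your $n\ge 2$ inductive step is a clean application of the depth lemma. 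The one place where your sketch is genuinely hand-wavy is the $n=1$ case: you propose to apply Theorem \ref{hull} with $\A=\add C$ and $\M=\G_C$, but $\add C$ is not resolving (it need not contain $R$), so Theorem \ref{hull} as stated does not apply; what one needs there is the other Auslander--Buchweitz approximation ($0\to Y\to G\to X\to 0$ with $G\in\G_C$ and $Y$ of finite $\add C$-dimension), or alternatively a direct argument that $\ext^1(X,C)\ne 0$ forces $\depth X<\depth R$. Since you would cite Gerko in the end anyway, the cleanest thing to do is what the paper does: cite \cite[Theorem 1.22]{Gerko01} for the whole statement.
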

In light of Lemma \ref{dimension}, when $R$ is Cohen-Macaulay this means that $\G_C$ is a thick subcategory of $\MCM$.  Interest in understanding $\mbox{G}_C$-dimension and the structure  of $\G_C$ is not new.  The following conjecture by Gerko from \cite[Conjecture 1.23]{Gerko01} is equivalent to saying that $\G_R$ is a thick subcategory of $\G_C$. 
\begin{conjecture}\label{lucho}
If $C$ is semidualizing, then for any module $X$, $\G_C\hdim X\le \G_R\hdim X$, and equality holds when both are finite.  
\end{conjecture}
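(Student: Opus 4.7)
The plan is to split the conjecture into two parts: the inequality $\G_C\hdim X\le\G_R\hdim X$ for every $X$, and the equality when both sides are finite. The equality is the easier half: if both are finite, I would localize at each $\p\in\spec R$ and apply Proposition \ref{ABform} twice to obtain
\[\G_R\hdim X_\p+\depth X_\p=\depth R_\p=\G_C\hdim X_\p+\depth X_\p,\]
so the two dimensions coincide locally. Since $\M\hdim X=\sup_\p\add\M_\p\hdim X_\p$ for any resolving $\M$ (by Lemma \ref{local} combined with Corollary \ref{1.1}), the equality propagates globally.

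For the inequality, I would reduce it to proving the containment $\G_R\sbe\G_C$: if $\G_R\hdim X=n<\infty$, then $X$ admits a resolution $0\to G_n\to\cdots\to G_0\to X\to 0$ with each $G_i\in\G_R$, and assuming $\G_R\sbe\G_C$ each $G_i$ lies in the resolving subcategory $\G_C$, forcing $\G_C\hdim X\le n$. To prove $\G_R\sbe\G_C$, I would take $X\in\G_R$ with complete free resolution $F_\bullet$ and try to show that $F_\bullet$ is simultaneously a complete $\G_C$-resolution of $X$. Each $F_i$ lies in $\G_C$ because $R\in\G_C$ by Definition \ref{sdm}. Using the natural isomorphism $\hm(F_i,C)\cong \hm(F_i,R)\tns_R C$ for finitely generated free $F_i$, exactness of $\hm(F_\bullet,C)$ reduces to showing that the exact complex $\hm(F_\bullet,R)$ stays exact after $-\tns_R C$. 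Breaking into short exact sequences, this in turn reduces to the Tor-vanishing $\tor_{>0}(G,C)=0$ for every syzygy and dual syzygy $G$ of $X$, each of which is again totally reflexive over $R$. Once this is secured, condition \ref{31} of Definition \ref{tot} is immediate, condition \ref{32} follows by the analogous argument applied to the dualized complete resolution, and condition \ref{33} follows from a five-lemma chase comparing $F_\bullet$ with $F_\bullet^{\da\da}$.

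The hard part will be the Tor-vanishing $\tor_{>0}(G,C)=0$ for every $G\in\G_R$, i.e., that $\G_R$ lies in the Auslander class of $C$. The difficulty is that totally reflexive modules typically have infinite projective dimension, so the standard argument producing Auslander-class membership from a finite projective resolution does not apply. My plan is to argue inductively along a complete resolution, exploiting the semidualizing identities $R\cong\hm(C,C)$ and $\ext^{>0}(C,C)=0$ together with the short exact sequences $0\to X_{i+1}\to F_{i+1}\to X_i\to 0$ relating consecutive syzygies; alternatively, one could pass to a derived Foxby equivalence and try to show $G\lns_R C$ has finite $\G_C$-dimension before descending back to $G$. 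It is precisely this step that makes Gerko's statement a genuine conjecture rather than a routine corollary of the earlier machinery, and it is where the bulk of the work would lie.
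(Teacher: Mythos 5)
What you are trying to prove is Conjecture \ref{lucho}, attributed to Gerko; the paper records it as an \emph{open conjecture} and gives no proof --- its only comment is that the statement is equivalent to $\G_R$ being a thick subcategory of $\G_C$ --- so there is no argument of the paper's to compare yours against.

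Your reduction is sound and agrees with that remark: by the Auslander--Buchsbaum formula (Proposition \ref{ABform}) the equality clause is automatic once both dimensions are finite, and the whole conjecture collapses to the containment $\G_R\sbe\G_C$, with thickness then free via Proposition \ref{ABform} and Lemma \ref{dimension}. The genuine gap --- which, to your credit, you flag yourself --- is the Tor-vanishing $\tor_{>0}(G,C)=0$ for every totally reflexive $G$, i.e.\ that $\G_R$ lies in the Auslander class of $C$. This is not a lemma awaiting routine verification: it is at least as hard as the conjecture, and the strategy you sketch for it does not close. The proposed induction ``along a complete resolution'' has no base case, since a nonfree totally reflexive module has a doubly infinite complete free resolution and the short exact sequences $0\to X_{i+1}\to F_{i+1}\to X_i\to 0$ merely shuttle the Tor-vanishing hypothesis between consecutive totally reflexive syzygies, never reaching a module of finite projective dimension --- the very obstruction you acknowledge a sentence earlier. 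There are also unaddressed details downstream: for instance, deducing condition (\ref{32}) of Definition \ref{tot} for $X^\da=\hm(X,C)$ from the dualized complete resolution implicitly uses the comparison map $\hm(X,R)\tns_R C\to\hm(X,C)$, which is an isomorphism on free modules but not for general $X$. In short, the proposal correctly locates the difficulty and honestly names it, but it does not prove the conjecture; no proposal could, short of resolving the open problem.
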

We give one more construction in this section.  Take any $X\in\G_C$.  Then we have $0\to\zz X^\da\to R^n\to X^\da\to 0$ is exact.  Since $R^\da\cong C$, applying $\da$ yields the exact sequence
\[0\to X\to C^n\to (\zz X^\da)^\da\to 0.\]
Hence $\G_C$ is cogenerated by $\add C$.  Furthermore, if $F_\bullet$ is a projective resolution of $X^\da$ with $X\in\G_C$, then $F_\bullet^\da$ is an $\add C$ coresolution of $X$.  Splicing this together with a free resolution $G_\bullet$ of $X$, we get what is called a complete $PP_C$ or a complete $P_C$-resolution of $X$.  See \cite{White10} or \cite{Sather-Wagstaff09b} for more on the matter.

Before proceeding, we summarize the notations of this paper.
\begin{enumerate}
\item $R$ is a commutative noetherian ring
\item $\PP$ is the  subcategory of projective $R$-modules
\item $\Ga$ is the set of grade consistent functions
\item $\M\hdim X$ is the dimension of $X$ with respect to the category $\M\sbe\md(R)$
\item $\add \M$ is the smallest category closed under direct sums and summands containing $\M\sbe \md(R)$ 
\item $\Ld_\M(f)=\{X\in\md R\mid \add\M_p\hdim X_p\le f(p)\quad \forall p\in \spec R\}$ with $f\in \Ga$
\item $\Phi_\M(\X)(p)=\sup\{ \add\M_p\hdim X_p\mid X\in\X\}$ with $\M,\X\sbe \md(R)$ subcategories
\item $\DD(\M)=\{X\in\md R\mid \M\hdim X<\infty\}$ with $\M\sbe \md(R)$ a category
\item $\R(\M)=\{\X\sbe \md(R)\mid \M\sbe \X\sbe\DD(\M)\ \ \X\mbox{ is resolving}\}$
\item $\R$ the collection of resolving subcategories
\item $\thick_\N(\M)$ the smallest thick subcategory of $\N$ containing $\M$ with $\M\sbe\N\sbe \md(R)$ subcategories
\item $C$ is a semidualizing module 
\item $\G_C$ the collection of totally $C$-reflexive modules
\item $X^\da=\hm(X,C)$
\item For a resolving subcategory $\A$ and a module $M\in\md(R)$, set $\Ares M=\res(\{M\}\cap \A)$.
\end{enumerate}

\section{Comparing Resolving Subcategories}\label{gamma}
For the entirety of this section, let $\A$, $\M$, and $\N$ be resolving subcategories.  Recall that $\R(\A)$ is the collection of resolving subcategories $\X$ such that $\A\sbe \X\sbe\DD(\A)$. In this section, we compare $\R(\A)$ and $\R(\M)$ when $\A$ is contained in $\M$.  If $\A\sbe\M$, we may define $\eta^\M_\A:\R(\A)\to \R(\M)$ by $\X\mapsto \res(\X\cup\M)$ and $\rho_\A^\M:\R(\M)\to\R(\A)$ by $\X\mapsto\X\cap\DD(\A)$.  Note that if $\A\sbe\M\sbe\N$, then $\eta_\A^\N=\eta_\M^\N\eta_\A^\M$ and $\rho_\A^\N=\rho_\A^\M\rho_\M^\N$.  
\begin{prop}\label{rhoinjective}
If $\A$ cogenerates $\M$, then the map $\rho_\A^\M$ is injective.
\end{prop}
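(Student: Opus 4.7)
The plan is to show that $\rho_\A^\M$ is injective by the usual two-set containment argument: assuming $\X_1,\X_2\in\R(\M)$ satisfy $\X_1\cap\DD(\A)=\X_2\cap\DD(\A)$, by symmetry it suffices to show $\X_1\sbe\X_2$. So I fix $X\in\X_1\sbe\DD(\M)$, hence $\M\hdim X=n$ for some $n<\infty$, and my job is to push $X$ into $\X_2$ using only the hypotheses (i) $\M\sbe\X_2$, (ii) every object of $\X_1\cap\DD(\A)$ lies in $\X_2$.

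The key tool is Theorem \ref{hull}: since $\A$ cogenerates $\M$ and $X\in\DD(\M)$, there is a short exact sequence
\[0\to X\to A\to M\to 0\]
with $A\in\DD(\A)$ and $M\in\M$. I claim $A\in\X_1$: indeed $M\in\M\sbe\X_1$ and $X\in\X_1$, and $\X_1$ is resolving (hence closed under extensions). Therefore $A\in\X_1\cap\DD(\A)=\X_2\cap\DD(\A)\sbe\X_2$. Now $A\in\X_2$ and $M\in\M\sbe\X_2$, and the third axiom of a resolving subcategory (applied to $0\to X\to A\to M\to 0$ with $M\in\X_2$) forces $X\in\X_2$. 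This completes $\X_1\sbe\X_2$, and by symmetry $\X_1=\X_2$.

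No step is really hard: the content is entirely packed into Theorem \ref{hull}, which supplies a finite $\A$-hull of $X$ whose ``cokernel'' $M$ already sits in $\M\sbe\X_i$. The only potential pitfall is making sure the containment $A\in\X_1$ is justified purely by extension closure (a consequence of being resolving) rather than by any thickness hypothesis on $\X_1$, and then reading the third resolving axiom in the correct direction to deduce $X\in\X_2$ from $A,M\in\X_2$. Both uses are valid since $\X_1,\X_2$ are full-blown resolving subcategories.
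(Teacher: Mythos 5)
Your proof is correct and follows essentially the same route as the paper: fix $X\in\X_1$, invoke Theorem \ref{hull} to produce $0\to X\to A\to M\to 0$ with $A\in\DD(\A)$ and $M\in\M$, use extension closure to place $A$ in $\X_1\cap\DD(\A)=\X_2\cap\DD(\A)$, and then the kernel-closure part of the resolving axiom to get $X\in\X_2$. Nothing to add.
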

\begin{proof}
Suppose that for $\X,\Y\in\R(\M)$, we have $\rho^\M_\A(\X)=\rho^\M_\A(\Y)$, i.e. $\X\cap\DD(\A)=\Y\cap\DD(\A)$.  Take any $X\in \X$.  Since $X\in\DD(\M)$ and $\A$ cogenerates $\M$, by Theorem \ref{hull}, there exists $A\in\DD(\A)$ and $M\in\M$ such that $0\to X\to A\to M\to 0$ is exact.  Since $M\in\M\sbe \X$ and $X\in\X$, we know that $A$ is also in $\X$.  But then $A$ is in $\X\cap\DD(\A)=\Y\cap\DD(\A)$ and thus also in $\Y$.  Since $M\in\M\sbe\Y$, we know that $X$ must also be in $\Y$.  Hence $\X\sbe \Y$, and, by symmetry, we have equality.  Therefore, $\rho^\M_\A$ is injective. 
\end{proof}
In certain circumstances, this map is a bijection.  The following is Theorem A from the introduction.
\begin{theorem}\label{engine}
Let $\Psi$ be a set of increasing functions from $\spec R$ to $\NN$.  Suppose, $\A\sbe \M$ such that $\A$ cogenerates $\M$ and $\add \A_p$ is thick in $\add \M_\p$ for all $\p\in\spec R$.  If $\Phi_\A$ and $\Ld_\A$ are inverse functions giving a bijection between $\R(\A)$ and $\Psi$, then the following diagram  commutes.
\[\xymatrix{
\R(\M) \ar[dr]^{\Phi_\M} & &\\
& \Psi &\\
\R(\A) \ar[ur]^{\Phi_\A} \ar[uu]^{\eta_\A^\M}& &\\
}\]
Furthermore,  $\Ld_\M$ and $\rho_\A^\M$ are the respective inverses of $\Phi_\M$ and  $\eta_\A^\M$.
\end{theorem}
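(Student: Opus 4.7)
The plan is to prove the theorem in three interlocking steps: first, verify the commutativity $\Phi_\M \circ \eta_\A^\M = \Phi_\A$; second, show that $\eta_\A^\M$ and $\rho_\A^\M$ are mutually inverse bijections between $\R(\A)$ and $\R(\M)$; and finally, deduce that $\Phi_\M$ is a bijection with inverse $\Ld_\M$.

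For commutativity, fix $\X \in \R(\A)$ and $\p \in \spec R$. The inequality $\Phi_\M(\eta_\A^\M(\X))(\p) \ge \Phi_\A(\X)(\p)$ is immediate: for any $X \in \X \sbe \DD(\A)$, a finite $\A$-resolution of $X$ localizes to show $X_\p \in \DD(\add \A_\p)$, so Lemma \ref{dimension} applied to the thick inclusion $\add \A_\p \sbe \add \M_\p$ yields $\add \M_\p\hdim X_\p = \add \A_\p\hdim X_\p$. For the reverse inequality, I would observe that $\Ld_\M(\Phi_\A(\X))$ is a resolving subcategory containing $\M$ (trivially) and $\X$ (by the same local dimension identity), whence $\res(\X \cup \M) \sbe \Ld_\M(\Phi_\A(\X))$ gives the desired bound.

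For the bijectivity of $\eta_\A^\M$ and $\rho_\A^\M$, injectivity of $\rho_\A^\M$ is Proposition \ref{rhoinjective}. To establish $\rho_\A^\M \circ \eta_\A^\M = \id_{\R(\A)}$, I would pick $Y \in \res(\X \cup \M) \cap \DD(\A)$; the commutativity step supplies $\add \M_\p\hdim Y_\p \le \Phi_\A(\X)(\p)$, and since $Y_\p \in \DD(\add \A_\p)$, thickness promotes this to $\add \A_\p\hdim Y_\p \le \Phi_\A(\X)(\p)$, placing $Y$ in $\Ld_\A(\Phi_\A(\X)) = \X$ by the hypothesised bijection. For $\eta_\A^\M \circ \rho_\A^\M = \id_{\R(\M)}$, take $\Y \in \R(\M)$ and $Y \in \Y \sbe \DD(\M)$; Theorem \ref{hull} provides an approximation $0 \to Y \to A \to N \to 0$ with $A \in \DD(\A)$ and $N \in \M$, and the resolving property of $\Y$ forces $A \in \Y \cap \DD(\A)$, after which the same exact sequence places $Y$ in $\res((\Y \cap \DD(\A)) \cup \M) = \eta_\A^\M(\rho_\A^\M(\Y))$.

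The final identification $\Phi_\M^{-1} = \Ld_\M$ then proceeds as follows: the factorisation $\Phi_\M = \Phi_\A \circ \rho_\A^\M$ already exhibits $\Phi_\M$ as a composition of bijections, with inverse $\eta_\A^\M \circ \Ld_\A$, so it suffices to show $\eta_\A^\M(\Ld_\A(f)) = \Ld_\M(f)$ for every $f \in \Psi$. The inclusion $\eta_\A^\M(\Ld_\A(f)) \sbe \Ld_\M(f)$ follows from the commutativity step. The reverse inclusion is the main obstacle. The plan is: given $Y \in \Ld_\M(f)$, first establish $Y \in \DD(\M)$ (so that Theorem \ref{hull} applies), then produce $0 \to Y \to A \to N \to 0$ with $A \in \DD(\A)$ and $N \in \M$; the exact sequence forces $\add \M_\p\hdim A_\p \le f(\p)$, and thickness upgrades this to $\add \A_\p\hdim A_\p \le f(\p)$, so $A \in \Ld_\A(f)$ and consequently $Y \in \res(\Ld_\A(f) \cup \M)$. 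The delicate point is thus the implication $\Ld_\M(f) \sbe \DD(\M)$; the most natural route is to argue that for $k$ exceeding the supremum of $f$, the syzygy $\zz^k Y$ is locally in $\add \M_\p$ for every $\p$ and hence globally in $\M$ by Lemma \ref{local}, an argument that succeeds whenever the functions in $\Psi$ are uniformly bounded (as is automatic for $\Psi \sbe \Ga$ when $R$ has finite Krull dimension).
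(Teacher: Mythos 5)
Your proof is correct and follows the same three-step architecture as the paper's, but departs from it in the details of each step, generally in ways that are equally valid. For the commutativity step, you observe directly that $\Ld_\M(\Phi_\A(\X))$ is a resolving subcategory containing both $\X$ and $\M$, hence contains $\res(\X\cup\M)$; the paper instead invokes its Lemma~\ref{jn}, the join identity $\Phi_\M(\res(\X\cup\Y))=\Phi_\M(\X)\jn\Phi_\M(\Y)$. Your route avoids that auxiliary lemma at the cost of having to know $\Ld_\M(g)$ is resolving for arbitrary increasing $g$ (true, since it is an intersection over $\p$ of resolving subcategories). For the bijectivity of $\eta_\A^\M$ and $\rho_\A^\M$, you prove both $\rho_\A^\M\eta_\A^\M=\id$ and $\eta_\A^\M\rho_\A^\M=\id$ directly, the latter via Theorem~\ref{hull} in the same spirit as Proposition~\ref{rhoinjective}; the paper proves only $\rho_\A^\M\eta_\A^\M=\id$ and extracts the other composite from the injectivity of $\rho_\A^\M$. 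For the final step, you aim at the set identity $\eta_\A^\M(\Ld_\A(f))=\Ld_\M(f)$, whereas the paper runs the sandwich $f=\Phi_\A\Ld_\A(f)=\Phi_\M(\eta_\A^\M\Ld_\A(f))\le\Phi_\M\Ld_\M(f)\le f$ and concludes from injectivity of $\Phi_\M$ on $\R(\M)$.

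Your closing caveat is well taken, and worth emphasizing: the implication $\Ld_\M(f)\sbe\DD(\M)$, equivalently $\Ld_\M(f)\in\R(\M)$, is never proved in the paper either. The paper's sandwich argument establishes $\Phi_\M\Ld_\M(f)=f$, but deducing $\Ld_\M=\Phi_\M^{-1}$ from this still tacitly requires that $\Ld_\M(f)$ lies in the domain $\R(\M)$ on which $\Phi_\M$ is known to be a bijection; otherwise one only learns that $\Ld_\M(f)$ and $\eta_\A^\M(\Ld_\A(f))$ have the same $\Phi_\M$-image, which a proper subcategory can share. So the subtlety you flag is inherited from the source rather than introduced by your alternative route. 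As you note, it is harmless once the functions in $\Psi$ are uniformly bounded (for instance when $\Psi\sbe\Ga$ and $\dim R<\infty$), since then Lemma~\ref{local} applied to $\zz^k X$ with $k\ge\sup f$ gives $X\in\DD(\M)$; a fully general argument would need a separate local-to-global finiteness statement.
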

The proof of this result will be given after this brief lemma.
\begin{lemma}\label{jn}
If $\X$ and $\Y$ are subcategories and $\M$ is resolving, then $\Phi_\M(\res(\X\cup\Y))=\Phi_\M(\X)\jn\Phi_\M(\Y)$.
\end{lemma}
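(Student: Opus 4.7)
The plan is to verify the claimed equality of functions pointwise at each prime $\p\in\spec R$. The easy direction is $\Phi_\M(\res(\X\cup\Y))(\p)\ge \Phi_\M(\X)(\p)\jn\Phi_\M(\Y)(\p)$: since both $\X$ and $\Y$ are contained in $\res(\X\cup\Y)$, the defining supremum is taken over a larger collection, so it dominates each of $\Phi_\M(\X)(\p)$ and $\Phi_\M(\Y)(\p)$, hence also their join.

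For the reverse inequality, fix $\p\in\spec R$ and set
\[
n=\Phi_\M(\X)(\p)\jn\Phi_\M(\Y)(\p).
\]
I would like to show that every module $Z\in\res(\X\cup\Y)$ satisfies $\add\M_\p\hdim Z_\p\le n$. The natural approach is to introduce the auxiliary subcategory
\[
\Z_n=\{Z\in\md(R)\mid \add\M_\p\hdim Z_\p\le n\}
\]
and prove that $\Z_n$ is resolving. Once this is established, every object of $\X\cup\Y$ lies in $\Z_n$ by the definition of $n$, and so $\res(\X\cup\Y)\sbe\Z_n$, which immediately yields $\Phi_\M(\res(\X\cup\Y))(\p)\le n$.

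The verification that $\Z_n$ is resolving proceeds in three steps. First, Lemma \ref{loc} shows that $\add\M_\p$ is a resolving subcategory of $\md(R_\p)$. Second, by the corollary preceding Lemma \ref{loc}, the subcategory $\{N\in\md(R_\p)\mid \add\M_\p\hdim N\le n\}$ is resolving inside $\md(R_\p)$. Third, the localization functor $\md(R)\to\md(R_\p)$ is exact and sends $R$ to $R_\p$, so the preceding lemma about preimages under exact functors (applied to localization at $\p$) shows that $\Z_n$, which is exactly the preimage of this resolving subcategory, is resolving in $\md(R)$. This completes the argument for the fixed prime $\p$, and since $\p$ was arbitrary we obtain the claimed equality of functions.

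The bookkeeping is routine and I do not see any real obstacle; the only step worth emphasizing is the recognition that checking resolvingness of $\Z_n$ factors cleanly through pulling back an already-resolving subcategory of $\md(R_\p)$ along the exact localization functor, which lets us avoid arguing directly about syzygies and extensions in $\md(R)$.
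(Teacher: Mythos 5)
Your proof is correct and in substance it is the same argument as the paper's. The paper argues the hard inequality by noting that the operations generating $\res(\X\cup\Y)$ from $\X\cup\Y$ (extensions, syzygies, direct summands) never increase $\add\M_\p$-dimension at any prime; you package the exact same fact through the universal property of $\res$, by observing that for each prime $\p$ and each $n$ the preimage of $\{N\in\md(R_\p)\mid \add\M_\p\hdim N\le n\}$ under localization is resolving and contains $\X\cup\Y$ when $n=\Phi_\M(\X)(\p)\jn\Phi_\M(\Y)(\p)$. Your formulation is a cleaner way to make the paper's one-line "these operations never increase the $\M$ dimension" rigorous, and it correctly assembles the three ingredients already in the paper (Lemma \ref{loc}, the corollary that $\{X\mid\M\hdim X\le n\}$ is resolving, and the preimage-under-an-exact-functor lemma), so there is nothing to fix.
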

\begin{proof}
Since every element in $\res(\X\cup\Y)$ is obtained by taking extensions, syzygies, and direct summands a finite number of times, and since these operations never increase the $\M$ dimension, we have $\Phi_\M(\res(\X\cup\Y))\le\Phi_\M(\X)\jn\Phi_\M(\Y)$.  However, since $\X,\Y\sbe \res(\X\cup\Y)$, we actually have equality.
\end{proof}
\begin{proof}[Proof of Theorem \ref{engine}]
First, we will show that $\rho_\A^\M$ and $\eta_\A^\M$  are inverse functions and are thus both bijections.  Proposition \ref{rhoinjective} shows that $\rho_\A^\M$ is injective. Fix $\X\in \R(\A)$ and let $\Z=\rho_\A^\M\eta_\A^\M(\X)=\res(\X\cup\M)\cap\DD(\A)$.  It suffices to show that $\Z=\X$.    Setting $f=\Phi_\A(\X)$, this is equivalent to showing that $\Phi_\A(\Z)=f$, since $\Phi_\A$ and $\Ld_\A$ are inverse functions.  Since $\X\sbe\Z$, we know that $\Phi_\A(\Z)\ge f$.  From Lemma \ref{jn}, we have
\[\Phi_\M(\res(\X\cup\M))=\Phi_\M(\X)\jn\Phi_\M(\M)=\Phi_\M(\X).\]
Furthermore, since $\add\A_p$ is thick in $\add\M_\p$ for all $p\in\spec R$, Lemma \ref{dimension} implies that $\add \A_p\hdim A$ and $\add \M_p\hdim A$ are the same for all $p\in\spec R$ and $A\in \DD(\A)$.  Hence $\Phi_\A(\X)=\Phi_\M(\X)$ and $\Phi_\A(\Z)=\Phi_\M(\Z)$.  Therefore,
\[f\le \Phi_\A(\Z)=\Phi_\M(\Z)\le\Phi_\M(\res(\X\cup \M))=\Phi_\M(\X)=\Phi_\A(\X)=f\]
and so, $\Phi_\A(\Z)=f$.  Hence,  $\rho_\A^\M$ and $\eta_\A^\M$  are inverse functions.   Also, this argument shows that $\Phi_\A(\X)=\Phi_\M(\res(\X\cup \M))=\Phi_\M(\eta_\A^\M(\X))$, showing that the diagram commutes and that $\Phi_\M$ is also  a bijection.

It remains to show that $\Ld_\M={\Phi_\M}^{-1}$.  For any $f\in\Psi$, $\eta^\M_\A(\Ld_\A(f))$ is contained in $\Ld_\M(f)$.  Because $\Phi_\M$ is an increasing function and both $\Phi_\A$ and $\Ld_\A$ are inverse functions, we have
\[f=\Phi_\A\Ld_\A(f)=\Phi_\M(\eta^\M_\A(\Ld_\A(f))\le\Phi_\M\Ld_\M(f)\le f.\]
Thus we have $\Phi_\M\Ld_\M(f)= f$, and we are done.
\end{proof}
For a resolving subcategory $\A$, let $\Ss(\A)$ be the collection of resolving subcategories $\M$ such that  $\M$ and $\A$ satisfy the hypotheses of Theorem \ref{engine}, i.e. $\A$ cogenerates $\M$ and $\add \A_\p$ is thick in $\add \M_\p$ for all $p\in\spec R$.  The following theorem shows that we can patch together the bijections in Theorem \ref{engine}.
\begin{theorem}\label{main}
Let $\A$ be a resolving subcategory and  $\Psi$ be a set of increasing functions from $\spec R$ to $\NN$.  If $\Phi_\A$ and $\Ld_\A$ are inverse functions giving a bijection between $\R(\A)$ and $\Psi$,  then the following is a bijection
 \[\Ld:\Ss(\A)\x\Psi\to \bigcup_{\M\in\Ss(\A)} \R(\M)\sbe \R.\] 
 Furthermore, for any $\M,\N\in\Ss(\A)$ with $\M\sbe \N$, the following diagram commutes and  $\rho_\M^\N$ is the inverse of  $\eta_\M^\N$. 
\begin{equation}\label{Aa}
\xymatrix{
\R(\N) \ar[dr]^{\Phi_\N} 							& 		&\\
\R(\M) \ar[r]^{\Phi_\M} \ar[u]^{\eta_\M^\N}		& \Psi &\\
\R(\A) \ar[ur]_{\Phi_\A} \ar[u]^{\eta_\A^\M}		& 		&\\
}
\end{equation}
\end{theorem}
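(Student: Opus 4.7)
The plan is to extend the bijection of Theorem \ref{engine} from a single $\M \in \Ss(\A)$ to the whole family $\Ss(\A)$, gluing the individual bijections $\Ld_\M : \Psi \to \R(\M)$ into the global map $\Ld$. Well-definedness is immediate from Theorem \ref{engine}, since for $(\M, f) \in \Ss(\A) \times \Psi$ we have $\Ld_\M(f) \in \R(\M) \sbe \bigcup_{\M' \in \Ss(\A)} \R(\M')$. For surjectivity, given $\X$ in the union I would pick any $\M \in \Ss(\A)$ with $\X \in \R(\M)$ and set $f = \Phi_\M(\X) \in \Psi$; Theorem \ref{engine} then delivers $\Ld(\M, f) = \X$.

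Injectivity takes more care. If $\Ld(\M_1, f_1) = \Ld(\M_2, f_2) = \X$, the equality $f_1 = f_2$ drops out of the commutative triangle in Theorem \ref{engine}: applying $\rho_\A^{\M_i}$ gives $\X \cap \DD(\A)$ regardless of $i$, so $f_i = \Phi_{\M_i}(\X) = \Phi_\A(\X \cap \DD(\A))$. The hard part will be showing $\M_1 = \M_2$, equivalently, that the $\R(\M)$ for distinct $\M \in \Ss(\A)$ are pairwise disjoint. My plan is to use the identity $\M_i = \Ld_{\M_i}(0)$ (a direct consequence of Lemma \ref{local}) to recover each $\M_i$ inside $\X$, and then invoke the cogeneration and thickness hypotheses encoded in $\Ss(\A)$: for any $M \in \M_1$, Theorem \ref{hull} provides an $\A$-hull $0 \to M \to A \to M' \to 0$ with $A \in \DD(\A)$, $M' \in \M_2$, and $\A\hdim A = \M_2\hdim M$; working locally at each $p \in \spec R$ and exploiting the thickness of $\add \A_p$ inside both $\add \M_{1,p}$ and $\add \M_{2,p}$ (so that $\A$-dimension and $\M_i$-dimension coincide on $\DD(\A)$ by Lemma \ref{dimension}), I expect to force $\A\hdim A = 0$, hence $M \in \M_2$, and by symmetry $\M_1 = \M_2$.

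For the commutative diagram and the invertibility of $\rho_\M^\N$ and $\eta_\M^\N$, I would exploit the composition identities $\eta_\A^\N = \eta_\M^\N \circ \eta_\A^\M$ and $\rho_\A^\N = \rho_\A^\M \circ \rho_\M^\N$ recorded at the start of Section \ref{gamma}. Applying Theorem \ref{engine} to both $(\A, \M)$ and $(\A, \N)$ yields $\Phi_\A = \Phi_\M \circ \eta_\A^\M = \Phi_\N \circ \eta_\A^\N$. Substituting the first composition identity and using that $\eta_\A^\M : \R(\A) \to \R(\M)$ is a bijection by Theorem \ref{engine} forces $\Phi_\N \circ \eta_\M^\N = \Phi_\M$ on all of $\R(\M)$, which is the triangle in (\ref{Aa}). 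Since $\Phi_\M$ and $\Phi_\N$ are bijections with inverses $\Ld_\M$ and $\Ld_\N$, this makes $\eta_\M^\N = \Ld_\N \circ \Phi_\M$ a composition of bijections, and a dual computation starting from $\rho_\A^\N = \rho_\A^\M \circ \rho_\M^\N$ identifies $\rho_\M^\N = \Ld_\M \circ \Phi_\N$ as its inverse.
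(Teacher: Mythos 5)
Your treatment of well-definedness, surjectivity, and the commutative triangle is essentially the paper's: the triangle identity $\Phi_\A = \Phi_\M\circ\eta_\A^\M = \Phi_\N\circ\eta_\A^\N$ together with the composition rule $\eta_\A^\N = \eta_\M^\N\circ\eta_\A^\M$ and the bijectivity of $\eta_\A^\M$ does give $\Phi_\M = \Phi_\N\circ\eta_\M^\N$, and your identification of $\rho_\M^\N$ as the inverse is correct. The reduction $f_1 = f_2 = \Phi_\A(\X\cap\DD(\A))$ is also fine.

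The gap is in the claim $\M_1 = \M_2$. Your plan is to take $M\in\M_1$, invoke Theorem \ref{hull} for the pair $(\A,\M_2)$ applied to $M\in\DD(\M_2)$ to get $0\to M\to A\to M'\to 0$ with $M'\in\M_2$, $A\in\DD(\A)$ and $\A\hdim A = \M_2\hdim M$, and then ``force $\A\hdim A = 0$'' via thickness and localization. But notice that $\A\hdim A = 0$ is by construction \emph{equivalent} to $\M_2\hdim M = 0$, which is exactly the statement $M\in\M_2$ that you are trying to establish — the Hull theorem packages the unknown quantity into $\A\hdim A$ rather than computing it. Thickness of $\add\A_p$ in $\add\M_{i,p}$ only tells you that $\A$-dimension agrees with $\M_i$-dimension on $\DD(\A)$, i.e.\ $\A\hdim A = \M_1\hdim A = \M_2\hdim A$; none of these is known to be $0$ (being in $\X = \Ld_{\M_i}(f)$ only bounds $\add\A_p\hdim A_p$ by $f(p)$, which can be positive). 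So there is no mechanism in your plan that makes the dimension drop, and the argument begs the question.

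The paper instead first proves (in the lemma immediately preceding) that $\Ss(\A)$ is closed under intersections. Granting this, since $\M_1\cap\M_2\sbe\X$ and for every $X\in\X$ some high syzygy $\zz^n X$ lies in $\M_1\cap\M_2$ (by Corollary \ref{1.1}), one has $\X\in\R(\M_1\cap\M_2)$; by Theorem \ref{engine} there is $h\in\Psi$ with $\Ld_{\M_1\cap\M_2}(h) = \X$, so one may replace $\M_1$ by $\M_1\cap\M_2$ and assume $\M\sbe\N$. In that situation $\N\sbe\X\sbe\DD(\M)$, and the chain of inverse identities
\[\N = \N\cap\DD(\M) = \rho_\M^\N(\N) = \eta_\A^\M\rho_\A^\M\rho_\M^\N(\N) = \eta_\A^\M\rho_\A^\N(\N) = \eta_\A^\M(\A) = \M\]
finishes the argument without ever needing to compute an $\A$-dimension directly. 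If you want to keep a Hull-theoretic flavor, you could substitute the Hull argument for the last chain \emph{after} the reduction to $\M\sbe\N$ — then $M'\in\N\sbe\X$ and $A$ is an extension of two elements of $\N$, hence in $\N$, and $\A\hdim A = \N\hdim A = 0$ by thickness — but the reduction via closure of $\Ss(\A)$ under intersection is the step your proposal is missing and cannot be skipped.
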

 Before we proceed with the proof of Theorem \ref{main}, we need a lemma.
\begin{lemma}
The set $\Ss(\A)$ is closed under intersections.
\end{lemma}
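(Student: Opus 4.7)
Let $\{\M_i\}_{i\in I}\sbe\Ss(\A)$ and put $\M=\bigcap_{i\in I}\M_i$. The plan is to verify the three conditions defining $\Ss(\A)$ in turn, checking that $\M$ is a resolving subcategory containing $\A$, that local thickness of $\add\A_\p$ in $\add\M_\p$ holds for every $\p$, and finally that $\A$ cogenerates $\M$. The first point is routine: each axiom in the definition of resolving (containing $R$, summand-closure, the three-for-two condition in short exact sequences) is universally quantified over objects and sequences and so passes to arbitrary intersections; and $\A\sbe\M_i$ for every $i$ immediately gives $\A\sbe\M$.

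For local thickness, fix $\p$. The containments $\M\sbe\M_i$ localize to $\add\M_\p\sbe\add\M_{i,\p}$, and $\A\sbe\M$ gives $\add\A_\p\sbe\add\M_\p$. Given an exact sequence $0\to L\to K\to N\to 0$ in $\md(R_\p)$ with $L,K\in\add\A_\p$ and $N\in\add\M_\p$, the module $N$ lies in $\add\M_{i,\p}$ for every $i$, so the hypothesis that $\add\A_\p$ is thick in each $\add\M_{i,\p}$ forces $N\in\add\A_\p$. This is the easy half.

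The delicate condition is cogeneration. Fix $M\in\M$ and an index $i_0$. Applying Theorem \ref{hull} to the pair $(\A,\M_{i_0})$ with $X=M$ and $\M_{i_0}\hdim M=0$ yields an Auslander--Buchweitz hull $0\to M\to A\to N\to 0$ with $A\in\A$ and $N\in\M_{i_0}$. I would like to argue that this single $N$ already lies in $\M_j$ for every other $j\in I$, which would place $N\in\M$ and finish the proof. For each such $j$, cogeneration of $\M_j$ by $\A$ yields another hull $0\to M\to A'\to N'\to 0$ with $A'\in\A$ and $N'\in\M_j$. The Auslander--Buchweitz framework underlying Theorem \ref{hull} supplies a comparison: the two left $\A$-approximations of $M$ agree up to an $\A$-summand, so there exist $B,B'\in\A$ with $N\op B\cong N'\op B'$. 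Since $N'\in\M_j$ and $B'\in\A\sbe\M_j$, the right-hand side lies in $\M_j$; summand-closure of $\M_j$ then forces $N\in\M_j$. Varying $j$ gives $N\in\M$.

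The main obstacle is this last step. The bare existence statement in the definition of cogeneration does not by itself pin down cokernels across ambient categories, so the argument must go beyond the cogeneration definition and appeal to the essential uniqueness of Auslander--Buchweitz approximations (up to an $\A$-summand) implicit in Theorem \ref{hull}. Once that uniqueness is in hand, the summand-closure of each $\M_j$ does the rest and the cogenerating cokernel descends into the intersection.
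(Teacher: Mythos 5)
Your treatment of the resolving-subcategory axioms and of local thickness is fine and matches the spirit of the paper. The gap is in the cogeneration step, and you have correctly identified where the delicate point lies---but the claim you lean on there is not available.

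You invoke an ``essential uniqueness of Auslander--Buchweitz approximations'' to compare the two cokernels $N\in\M_{i_0}$ and $N'\in\M_j$, asserting $N\op B\cong N'\op B'$ for some $B,B'\in\A$. This is not justified by the hypotheses on $\Ss(\A)$, and it is false in general. The uniqueness of AB hulls up to trivially split $\omega$-summands is a consequence of the relative Ext-vanishing $\ext^{>0}(\M,\A)=0$ built into a genuine AB-context; the paper's notion of ``$\A$ cogenerates $\M$'' asserts no such Ext-vanishing, and a cogenerating sequence $0\to M\to A\to N\to 0$ is merely an exact sequence with $A\in\A$ and $N\in\M$, not a categorical left $\A$-approximation. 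Already for $\A=\PP$ and $M=R$ one sees the failure: $0\to R\xto{x}R\to R/(x)\to0$ and $0\to R\xto{x^2}R\to R/(x^2)\to 0$ are both cogenerating sequences (into any resolving $\M$ containing the cokernels), yet $R/(x)\op R^a\not\cong R/(x^2)\op R^b$ for any $a,b$. Moreover, even when AB uniqueness is available, it lives inside a single AB-context; you would be comparing hulls computed relative to $\M_{i_0}$ and relative to $\M_j$, which is a cross-context statement that uniqueness does not supply.

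The paper avoids the issue entirely. Given $X\in\M\cap\N$ and cogenerating sequences $0\to X\to A\to M'\to 0$ (cokernel in $\M$) and $0\to X\to A'\to N'\to 0$ (cokernel in $\N$), it forms the pushout $T$ of $A\leftarrow X\to A'$. Extension-closure of $\M$ (applied to $0\to A'\to T\to M'\to 0$) and of $\N$ (applied to $0\to A\to T\to N'\to 0$) place $T$ in $\M\cap\N$, and the pushout produces $0\to X\to A\op A'\to T\to 0$ with $A\op A'\in\A$. No uniqueness is required; the pushout manufactures a single cokernel lying in both categories. If you want a correct proof, replace your comparison step with this pushout construction. (One further caveat: your argument is written for arbitrary intersections, whereas this pushout method handles two at a time; the paper's statement and its use in Theorem \ref{main} only require the pairwise case.)
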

\begin{proof}
Let $\M,\N\in\Ss(\A)$.  Take any  $p\in\spec R$.  Suppose $0\to A_1\to A_2\to A_3\to 0$ is an exact sequence of $R_p$ modules with $A_1,A_2\in\add\A_p$ and $A_3\in\add (\M\cap\N)_p$.  Then $A_3$ is in $\add \M_p$.  Therefore, since $\add\A_\p$ is thick in $\add\M_p$ by assumption, $A_3$ is in $\add\A_\p$.  Since $\add\A_\p$ is resolving and contained in $\add (\M\cap\N)_p$,  $\add\A_\p$ is thick in $\add (\M\cap\N)_p$.

It remains to show that  $\A$ cogenerates $\M\cap\N$.  Take $X\in \M\cap\N$.  We have
\[0\to X\to A\to M\to 0\quad \quad\quad 0\to X\to A'\to N\to 0\]
with $M\in\M$, $N\in\N$, and $A,A'\in\A$.  Consider the following pushout diagram.
\[\xymatrix{
		& 0 \ar[d]		 			& 0 \ar[d]					& 						& \\
0\ar[r]	& X	\ar[r] \ar[d]				& A \ar[r] \ar[d]				& M  \ar[r] \ar@{=}[d]		& 0	\\
0\ar[r]	& A' \ar[r] \ar[d]				& T \ar[r] \ar[d]				& M \ar[r] 				& 0\\	
		& N \ar@{=}[r] \ar[d]			& N \ar[d]					& 						& \\
		& 0		 					& 0 							& 						& \\
}\]
It is easy to see $T\in\M\cap\N$.  We also have the exact sequence
\[0\to X\to A\op A'\to T\to 0\]
Since $A\op A'\in\A$, this completes the proof. 
\end{proof}
\begin{proof}[Proof of Theorem \ref{main}]
Suppose $\M,\N\in\Ss$ with $\M\sbe\N$.  From Theorem \ref{engine}, the following diagrams commute.
\[\xymatrix{
\R(\M) \ar[r]^{\Phi_\M}							& \Psi &\\
\R(\A) \ar[ur]_{\Phi_\A} \ar[u]^{\eta_\A^\M}		& 		&\\
}
\quad\quad\quad\quad\quad
\xymatrix{
\R(\N) \ar[r]^{\Phi_\N} 		& \Psi &\\
\R(\A) \ar[ur]_{\Phi_\A} \ar[u]^{\eta_\A^\N}		& 		&\\
}
\]
From here, it is easy to show that diagram (\ref{Aa}) commutes and $\Phi_\N$ and  $\eta_\M^\N$ are bijections with $(\eta_\M^\N)^{-1}=\rho_\M^\N$.

Also, Theorem \ref{engine} shows that  $\im(\Lambda)=\bigcup_{\M\in\Ss}\R(\M)$.  It remains to show that $\Lambda$ is injective.  Suppose $\X=\Ld_\M(f)=\Ld_\N(g)$.  Then we have $\M\sbe \X$ and $\N\sbe \X$, and hence $\M\cap\N\sbe\X$.  For any $X\in\X$ and any $n$ greater than $\M\hdim X$ and $\N\hdim X$, $\zz^n X$ is in $\M\cap\N$ by Corollary \ref{1.1}.  Therefore, $\X$ is contained in $\DD(\M\cap\N)$ and thus $\X\in\R(\M\cap\N)$.  The previous lemma tells us that $\M\cap\N$ is in $\Ss(\A)$, and so $\Ld_{\M\cap\N}:\Psi\to\R(\M\cap\N)$ is a bijection, by Theorem  \ref{engine}.  So there exists an $h\in\Psi$ such that $\Ld_{\M\cap\N}(h)=\Z=\Ld_\M(f)=\Ld_\N(g)$.  Therefore, we may assume that $\M$ is contained in $\N$.  

Since $\X\in\R(M)$ and $\X\in\R(\N)$,  we have $\N\sbe \X\sbe\DD(\M)$.   Thus, because $\A\sbe\M\sbe \N$, we have 
\[\N=\N\cap\DD(\M)=\rho_\M^\N(\N)=\eta_\A^\M\rho_\A^\M\rho_\M^\N(\N)=\eta_\A^\M\rho_\A^\N(\N)=\eta_\A^\M(\A)=\M\]
Since $\Ld_\M$ is injective, we then also have $f=g$.  
\end{proof}

As mentioned earlier, it is shown in \cite{DaoTakahashi13} that we have $\Ld_\PP$ is a bijection from $\Ga$ to $\R(\PP)$.  In Section \ref{zeta} and Section \ref{eta} we apply Theorem \ref{main} when  $\A=\PP$, and show  that $\Ss(\PP)$ contains the collection of thick subcategories of $\G_R$.  The following results gives an alternative way of viewing Theorem \ref{main}.
\begin{prop}\label{3.1}
In the situation of Theorem \ref{main}, if $\Psi=\Ga$ and $\PP$ is thick in $\M$, then the following diagram commutes.
\[\xymatrix{
\Ss(\A)\x \Ga \ar[dd]_{\id_{\Ss(\A)}\x\Ld_\PP} \ar[dr]^{\Ld}& &\\
&  \R & \\
\Ss(\A)\x\R(\PP) \ar[ur]^{\Xi} & & \\
}\]where $\Xi(\M,\X)=\res(\M\cup\X)$.  Furthermore, $\id_{\Ss(\A)}\x\Ld_\PP$ is bijective and $\Xi$ is injective.  
\end{prop}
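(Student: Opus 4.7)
The overall strategy is to reduce every claim to the commutativity of the triangle, which in turn reduces to the identity $\res(\M \cup \Ld_\PP(f)) = \Ld_\M(f)$ for each $(\M, f) \in \Ss(\A) \times \Ga$. Bijectivity of $\id_{\Ss(\A)} \times \Ld_\PP$ is immediate from the Dao--Takahashi bijection $\Ld_\PP : \Ga \to \R(\PP)$ recalled in the introduction. Injectivity of $\Xi$ is then a direct diagram chase: given $\Xi(\M_1, \X_1) = \Xi(\M_2, \X_2)$, I would write $\X_i = \Ld_\PP(f_i)$ for the unique $f_i \in \Ga$, invoke commutativity to get $\Ld(\M_1, f_1) = \Ld(\M_2, f_2)$, and apply the injectivity of $\Ld$ from Theorem \ref{main} to conclude that $\M_1 = \M_2$ and $f_1 = f_2$, whence $\X_1 = \X_2$.

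To prove the identity, I would first rewrite the left-hand side as $\eta_\PP^\M(\Ld_\PP(f))$ using the definition of $\eta_\PP^\M$. The hypothesis that $\PP$ is thick in $\M$, together with Lemma \ref{dimension} applied at each $p \in \spec R$, shows that $\add\PP_p\hdim$ and $\add\M_p\hdim$ agree on modules of finite projective dimension; consequently $\Phi_\M$ and $\Phi_\PP$ agree on $\R(\PP)$, so
\[
\Phi_\M(\Ld_\PP(f)) = \Phi_\PP(\Ld_\PP(f)) = f.
\]
Combining this with Lemma \ref{jn} yields $\Phi_\M(\res(\M \cup \Ld_\PP(f))) = \Phi_\M(\M) \jn f = f$. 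Since $\DD(\M)$ is resolving and contains both $\M$ and $\Ld_\PP(f) \sbe \DD(\PP) \sbe \DD(\M)$, the subcategory $\res(\M \cup \Ld_\PP(f))$ lies in $\R(\M)$, and invoking the bijection $\Phi_\M : \R(\M) \to \Ga$ from Theorem \ref{main} (whose inverse is $\Ld_\M$) gives $\res(\M \cup \Ld_\PP(f)) = \Ld_\M(f)$.

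The only mildly subtle point is extracting local thickness of $\add\PP_p$ in $\add\M_p$ from the global hypothesis that $\PP$ is thick in $\M$, which is what enables the agreement of the two dimension functions in the argument above. In the intended applications $\M$ will be a thick subcategory of $\G_C$ or of $\MCM$, so this is automatic via the Auslander--Buchsbaum formula (Proposition \ref{ABform}); otherwise the argument is a routine diagram chase built on top of Theorem \ref{main}.
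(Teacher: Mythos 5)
Your proof follows essentially the same route as the paper: reduce to showing $\res(\M\cup\Ld_\PP(f))=\Ld_\M(f)$, compute $\Phi_\M$ of the left-hand side via Lemma \ref{jn} using $\Phi_\M(\M)=0$ and $\Phi_\M(\Ld_\PP(f))=\Phi_\PP(\Ld_\PP(f))=f$, and then invoke the bijection $\Phi_\M\colon\R(\M)\to\Ga$ from Theorem \ref{main}. You are right to single out the passage from the global hypothesis that $\PP$ is thick in $\M$ to the local agreement $\add\PP_p\hdim X_p=\add\M_p\hdim X_p$ as the one step that needs care; the paper's own proof glosses over it, and in the intended applications (Theorems \ref{main3} and \ref{maingor}) it holds because $\add\M_p$ is thick in $\G_{C_p}$ by Corollary \ref{6.2}, so dimension with respect to $\add\M_p$ satisfies the Auslander--Buchsbaum formula and therefore agrees with $\pd_{R_p}$ on modules of finite projective dimension. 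Your derivation of the injectivity of $\Xi$ from the commutative triangle together with the injectivity of $\Ld$ is also the argument the paper leaves implicit.
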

\begin{proof}
Since $\Ld_\PP$ is bijective, $\id_{\Ss(\A)}\x\Ld_\PP$ is too.  It suffices to show that for any $(\M,f)\in\Ss(\A)\x \Ga$ we have $\Xi(\M,\Ld_\PP(f))=\Ld_\M(f)$.  Set $\Z=\Xi(\M,\Ld_\PP(f))$.  First note that $\Z$ is in $\R(\M)$.  Since  $\PP$ is thick in $\M$ and hence in $\M$, by Lemma \ref{jn}, we have 
 \[\Phi_\M(\Z)=\Phi_\M(\res(\M\cup\Ld_\PP(f)))=\Phi_\M(\M)\jn\Phi_\M(\Ld_\PP(f))=\Phi_\PP(\Ld(\PP)(f))=f\]
  and thus $\Ld_\M(f)=\Z$, proving the claim.
 \end{proof}

\section{A generalization of the Auslander transpose}\label{delta'}

Let $C$ be a semidualizing module, and set $-^\da=\hm(-,C)$.  For the entirety of this section,  $\A$ denotes  a thick subcategory of $\G_C$ that is closed under $\da$. Recalling Proposition \ref{ABform}, $\A\hdim$ satisfies the Auslander Buchsbaum formula.  We set $\Ares M=\res(\{M\}\cup\A)$.

The Auslander transpose has been an invaluable tool in both representation theory and commutative algebra.  In this section, we generalize the notion of the Auslander transpose using semidualizing modules and list some properties which we will use.  The Auslander transpose has previously been generalised in  \cite{Geng13} and \cite{Huang99}, but the construction here is different. 
\begin{definition}
An {\it $\A$-presentation} of $X$ is an exact sequence $A_1\xto{\ph} A_0\to X\to 0$ with $A_1,A_0\in\A$.    Set $\atr X=\coker \ph^\da$. 
\end{definition}

When $C=R$, we get the usual Auslander transpose of $X$ which will be denotes $\tr X$.  The "functor" $\atr$ is not well defined up to isomorphism or even stable isomorphism, motivating a new equivalence relation. Finding the correct equivalence relation is actually a subtle affair.  The equivalence relation must make $\atr X$  be well defined, but it must also detect resolving subcategories.  For modules $X$ and $Y$, we write $X\sim'Y$ and $Y\sim'X$ if there exists an $A\in\A$ such that  $0\to X\to Y\to A\to 0$ is exact.  Let $\A$-equivalence, denoted by $\sim$, be the transitive closure of the relation $\sim'$.   Since $\sim'$ is  symmetric and reflexive, $\sim$ is an equivalence relation.  Stable equivalence implies $\A$-equivalence, and when $\A=\PP$, they are the same.  We will see in a moment that $\atr X$ has the desired properties.
 
 \begin{remark}
 
 We would like to think of $\atr$ as a functor.  However, $\md(R)$ modulo $\A$-equivalence does not form a sensible category.  However, a very similar construction is functorial.  Let $P\xto{\ph} Q\to X\to 0$ be a projective presentation.  Set $\Trc X=\coker \ph^\da$.  A similar construction is given in \cite{Geng13} and \cite{Huang99}.  We will briefly show that $\Trc:\md(R)/\A\to \md(R)/\A$ is a functor.  We thank the referee for bringing the following construction to our attention.  
 
 We give some definitions first.
 \begin{enumerate}
\item $\X/\Y$ is the category whose objects are $\X$, and whose morphisms are
\[\hm_{\X/\Y}(X_1,X_2):=\frac{\hm_\X(X_1,Y_1)}{F_\Y(X_1,X_2)}\]
where $X_1,X_2\in\X$ and $F_\Y(X_1,X_2)$ is the subgroup of morphisms in $\X$ which factor through an object in $\Y$.  
\item $\mor \X$ is the category whose objects are morphisms $f:X_1\to X_2$.  A morphism $(g_1,g_2)$ between objects $f:X_1\to X_2$  and $f':X'_1\to X'_2$ in $\mor \X$ is a pair of morphisms $g_1:X\to X'$ and $g_2:Y\to Y'$ such that the following diagram commutes.
\[\xymatrix{
X_1 \ar[r]^{g_1} \ar[d]^f 	& X'_1 \ar[d]^{f'} \\
X_2 \ar[r]^{g_2} 			& X'_2\\
}\]
\item For $f,f'\in \mor \X$, a morphism $(g_1,g_2):f\to f'$ is homotopically trivial if there exists an $h: X_2\to X'_1$ such that $f'hf=g_2f=f'g_1$.  Let $H_{\X}(f,f')$ denote the subgroup of homotopically trivial maps.
\item $\hmor\X$ is the category whose objects are the same as $\mor\X$ but whose morphisms are 
\[\hm_{\hmor\X}(f,f')=\hm_{\mor\X}(f,f')/H_\X(f,f')\]
\end{enumerate}

   We now mimic the construction of the Auslander transpose given in  \cite[Chapter 3, Section 1]{Auslander71}.  Set $\C=\add C$.  The functor $\da$ restricts to a functor $\da:\PP\to \C$ which induces a contravariant functor $\da:\mor \PP\to \mor \C$ given by $f\mapsto f^\da$.  It is easy to check that the group homomorphism $\da:\hm_{\mor\PP}(f,f')\to \hm_{\mor\C}({f'}^\da,f^\da)$ given by $(g_1,g_2)\mapsto (g_2^\da,g_1^\da)$ maps the subgroup $H_{\PP}(f,f')$ to $H_{\C}({f'}^\da,f^\da)$.  Therefore, $\da$ induces a functor $\hmor \PP\to \hmor\C$.  Furthermore, it is easy to check that the functor $\coker:\hmor\C\to \mod R/\C$ given by $f\mapsto \coker f$ is a well defined functor.  The discussion in  \cite[Chapter 3, Section 1]{Auslander71} indicates that there is a functor $\rho:\md(R)/\PP\to \hmor\PP$ which sends a module to a projective presentation.  We summarize these discussions with  the following commutative diagram.  
\[\xymatrix{
							& \mor\PP \ar[d] \ar[r]^{\da}		& \mor\C \ar[d]			& \\
\mod(R)/\PP \ar[r]^{\rho}	& \hmor\PP \ar[r]^{\da}			& \hmor\C \ar[r]^{\coker} & \md(R)/\C\\
}\]
The composition of the bottom row is $\Trc$.    Since $\A$ is closed under $\da$ and is thick in $\MCM$, $\Trc$ fixes $\A$.  Thus, since $\PP,\C\sbe \A$, it follows that $\Trc$ induces a functor $\md(R)/\A\to \md(R)/\A$ as desired.  

This approach has two deficiencies.  First, we cannot compute $\Trc$ using $\A$-resolutions.  We will use $\A$-resolutions for example in Lemma \ref{5.1} (4) to show that $\atr\atr X\sim X$.  Second, if $X$ and $Y$ are isomorphic in $\md(R)/\A$, it is not clear if $\Ares X=\Ares Y$.  Because of these issues, $\Trc$ cannot take the place of $\atr$ in this work.

\end{remark}

We proceed to show that $\A$-equivalence is sufficient for our purposes.  

\begin{prop}
For a module $X$, $\atr X$ is unique up to $\A$-equivalence.
\end{prop}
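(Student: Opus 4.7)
The plan is to reduce the problem to comparing an arbitrary $\A$-presentation against a canonical projective presentation. Since $\A$ is a thick subcategory of $\G_C$ and all thick subcategories in this paper contain $R$, closure of $\A$ under summands (a consequence of $\A$ being resolving) yields $\PP \sbe \A$, so any projective presentation $P_1 \to P_0 \to X \to 0$ is itself an $\A$-presentation. By transitivity of $\sim$, it therefore suffices to show that for any $\A$-presentation $A_1 \xrightarrow{\phi} A_0 \to X \to 0$ the cokernels $\atr X = \coker \phi^\da$ and $T := \coker(P_0^\da \to P_1^\da)$ are $\A$-equivalent.

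To carry out the comparison, I would use the projectivity of $P_0$ and $P_1$ to construct a chain map $g\colon P_\bullet \to A_\bullet$ lifting $\id_X$, and then enlarge the $P_i$ (by adding free summands surjecting onto $A_0$ and onto $A_1$) to arrange that both $g_0$ and $g_1$ are surjective. Setting $K_i := \ker g_i$, we have $K_i \in \A$ because $\A$ is resolving and hence closed under syzygies. This produces a short exact sequence of 2-term complexes
\[0 \to K_\bullet \to P_\bullet \to A_\bullet \to 0\]
with all components in $\G_C$. Since $\ext^{>0}(-,C)$ vanishes on $\G_C$, applying $\hm(-,C)$ preserves exactness; the resulting long exact sequence in cohomology, together with the observation that the induced map $X^\da \to X^\da$ on $H^0$ is the identity, yields the four-term exact sequence
\[0 \to H^0(K^\bullet) \to \atr X \to T \to H^1(K^\bullet) \to 0,\]
where $K^\bullet = (K_0^\da \to K_1^\da)$.

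The heart of the argument is to use this four-term sequence to exhibit $\atr X \sim T$ as an explicit chain of $\sim'$-relations. Splitting through the image $I := \im(\atr X \to T)$ gives $0 \to H^0(K^\bullet) \to \atr X \to I \to 0$ and $0 \to I \to T \to H^1(K^\bullet) \to 0$; the second directly yields $I \sim' T$ once $H^1(K^\bullet) \in \A$, and an analogous analysis would connect $\atr X$ to $I$ through $H^0(K^\bullet)$. Identifying $H^0(K^\bullet) \cong H_0(K_\bullet)^\da$ and unpacking $H^1(K^\bullet)$ via the auxiliary sequences $0 \to H_1(K_\bullet) \to K_1 \to \im d \to 0$ and $0 \to \im d \to K_0 \to H_0(K_\bullet) \to 0$ reduces the question to showing that certain submodules and quotients of $\A$-modules land back in $\A$. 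The main obstacle is precisely this last point: submodules and quotients of $\A$-modules are not in general in $\A$, so the thickness of $\A$ within $\G_C$, together with closure of $\A$ under $\da$, must be invoked decisively to certify that each intermediate term in the chain of $\sim'$-relations truly lies in $\A$.
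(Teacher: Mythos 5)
Your reduction to comparing an arbitrary $\A$-presentation $\rho$ against a projective presentation $\pi$ is the same first move the paper makes, but your construction of the comparison is too generic and this produces a genuine gap. After lifting $\id_X$ and enlarging $P_\bullet$ so that $g_0,g_1$ are surjective, you only know $K_i=\ker g_i\in\A$; you have no control over the two-term kernel complex $K_1\to K_0$ itself. Its homology $H_0(K_\bullet)=\coker(K_1\to K_0)$ is a \emph{quotient} of $K_0$, and there is no reason for it to lie in $\G_C$, much less in $\A$. The same goes for $H_1(K_\bullet)$. Since $H^0(K^\bullet)\cong H_0(K_\bullet)^\da$ and $H^1(K^\bullet)$ is built similarly, neither endpoint of your four-term sequence can be certified to lie in $\A$, which you candidly flag but do not resolve. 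There is also a second, structural problem you gloss over: even granting $H^0(K^\bullet)\in\A$, the piece
\[0\to H^0(K^\bullet)\to \atr^\rho X\to I\to 0\]
has the $\A$-module on the \emph{left}, whereas $\sim'$ is defined by an exact sequence $0\to X\to Y\to A\to 0$ with the $\A$-module as the cokernel. So this sequence does not produce a $\sim'$ link between $\atr^\rho X$ and $I$; the ``analogous analysis'' you allude to does not exist.

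The paper sidesteps both problems at once by choosing the projective presentation so that the epimorphism $\pi\to\rho$ has a kernel complex that is already a \emph{short exact sequence} $0\to B_2\to B_1\to B_0\to 0$ with all $B_i\in\A$ (this is arranged by a pullback construction in the spirit of \cite[Proposition 4]{Masek99}, not by blindly adding free summands). With that choice, the dualized kernel complex has $H^0=0$ and $H^1=B_2^\da\in\A$, so the long exact sequence collapses to the three-term sequence
\[0\to\atr^\rho X\to\atr^\pi X\to B_2^\da\to 0,\]
which has the $\A$-module on the correct side and hence gives $\atr^\rho X\sim'\atr^\pi X$ outright. Two general $\A$-presentations are then compared by routing both through projective presentations mapping onto them and invoking Masek to dominate the two projective presentations by a third. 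The missing ingredient in your argument is precisely this exactness of the kernel complex; once you arrange it, the rest of your outline collapses to the paper's proof.
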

\begin{proof}
Let $\pi$ be the projective presentation $P_1\to P_0\to X\to 0$, and $\rho$ the $\A$-presentation $A_1\to A_0\to X\to 0$.  Suppose there is an epimorphism $\pi\to \rho$.    Then there exists the following commutative diagram
\begin{equation}\label{resB}
\xymatrix{
&			&					& 0	\ar[d]				& 0	\ar[d]				& 							&	\\
& 0 \ar[r]		& B_2 \ar[r]			& B_1 \ar[r] \ar[d]			& B_0 \ar[r]	\ar[d]		& 0							& \\
& 			& 					& P _1\ar[r]	\ar[d]		& P_0 \ar[r]	\ar[d]		& X \ar[r]	\ar@{=}[d]			& 0\\
&			& 					& A_1 \ar[r]	\ar[d]		& A_0 \ar[r]	\ar[d]		& X \ar[r] 					& 0\\		
& 			& 					& 0						& 0						& 							& 		\\
}
\end{equation}
whose columns are exact and $B_0,B_1,B_2$ in $\A$.  Applying $\da$ to the diagram yields
\[\xymatrix{
& 			& 								& 0	 \ar[d]					& 0	 \ar[d]				& 0	 \ar[d]					& 	\\
& 0	\ar[r]	& X^\da \ar[r] \ar@{=}[d]			& A_0^\da \ar[r] \ar[d]			& A_1^\da  \ar[r] \ar[d]		& \atr^\rho X \ar[r] \ar[d]		& 0\\
& 0 \ar[r]		& X^\da \ar[r]						& P_0^\da \ar[r] \ar[d]			& P_1^\da \ar[r] \ar[d]		& \atr^\pi X \ar[r] \ar[d]			& 0\\
& 			& 0 \ar[r]							& B_0^\da \ar[r] \ar[d]			& B_1^\da \ar[r] \ar[d]		& B_2^\da \ar[r] \ar[d]			& 0\\		
& 			& 								& 0							& 0						& 0							& 	\\
}\]
Where $\atr^\rho X$ and $\atr^\pi X$  denote $\atr X$ computed using $\rho$ and $\pi$ respectively. Since the rows are exact, and the middle two columns are exact, the snake lemma shows the last column is exact.  Since $B_2^\da\in\A$, we see that  $\atr^\rho X\sim \atr^\pi X$.

Consider any two $\A$-presentations, $\rho$ and $\rho'$.  It is easy to construct projective presentations $\psi$ and $\psi'$ with epimorphisms $\psi\to \rho$ and  $\psi'\to \rho'$. The proof of \cite[Proposition 4]{Masek99} shows that there is a projective presentation of $\pi$ and epimorphisms $\pi\to \psi$ and $\pi\to \psi'$.  Using our work so far, we know that $\atr^\rho X\sim \atr^\pi X\sim\atr^{\rho'}$. 
\end{proof}

\begin{lemma}\label{5.1}
For any $X,Y\in\md(R)$ such that $X\sim Y$, the following are true.
\begin{enumerate}
	\item $\Ares X=\Ares Y$
	\item $\zz X\sim \zz Y$
	\item $\atr X\sim\atr Y$
	\item $\atr\atr X\sim X$
\end{enumerate}
\end{lemma}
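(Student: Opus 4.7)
Since $\sim$ is by definition the transitive closure of $\sim'$, it suffices to prove each of (1)--(4) under the stronger hypothesis that $X\sim' Y$, witnessed by a short exact sequence
\[
0\to X\to Y\to A\to 0,\qquad A\in\A.
\]
The general statements then follow by chaining.

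For (1), I would use the third axiom of resolving subcategories directly. Since $A\in\A\sbe\Ares X$ and $X\in\Ares X$, the axiom forces $Y\in\Ares X$, whence $\{Y\}\cup\A\sbe\Ares X$ and $\Ares Y\sbe\Ares X$. The symmetric argument gives equality. For (2), I would apply the horseshoe lemma to obtain a short exact sequence $0\to\zz X\to\zz Y\to\zz A\to 0$ for suitable choices of syzygies; since $\A$ is resolving, $\zz A\in\A$, so $\zz X\sim'\zz Y$. Any other choice of syzygies differs by projective summands, i.e.\ by a stable equivalence, which implies $\A$-equivalence.

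For (3), the key is to choose compatible $\A$-presentations rather than use a horseshoe (which would produce $0\to\atr A\to\atr Y\to\atr X\to 0$, a sequence of the ``wrong shape'' for exhibiting $\sim'$ since $\atr X\notin\A$ in general). I would start with an $\A$-presentation $B_1\xto{\phi_Y} B_0\to Y\to 0$, set $B_0'=\ker(B_0\to A)=B_0\times_Y X$, and observe that $0\to B_0'\to B_0\to A\to 0$ is exact with $B_0,A\in\A$, forcing $B_0'\in\A$ by the resolving property. A direct diagram chase shows $B_0'\to X$ is surjective and $\phi_Y$ factors as $B_1\xto{\phi_X} B_0'\hookrightarrow B_0$, giving an $\A$-presentation $B_1\xto{\phi_X}B_0'\to X\to 0$. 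Since $A\in\G_C$, dualizing $0\to B_0'\to B_0\to A\to 0$ produces a surjection $\iota^\da\colon B_0^\da\twoheadrightarrow B_0'^\da$ with kernel $A^\da$, and from $\phi_Y^\da=\phi_X^\da\circ\iota^\da$ we conclude
\[
\im\phi_Y^\da=\phi_X^\da(\im\iota^\da)=\phi_X^\da(B_0'^\da)=\im\phi_X^\da
\]
inside $B_1^\da$. Hence $\atr X=\atr Y$ as quotients of $B_1^\da$ for this choice of presentations, and by the preceding uniqueness-up-to-$\A$-equivalence proposition, $\atr X\sim\atr Y$ in general.

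For (4), take any $\A$-presentation $A_1\xto{\phi} A_0\to X\to 0$. Applying $\da$ yields an exact sequence $A_0^\da\xto{\phi^\da} A_1^\da\to \atr X\to 0$ with $A_0^\da,A_1^\da\in\A$ (since $\A$ is closed under $\da$), which is itself an $\A$-presentation of $\atr X$. Computing $\atr\atr X$ via this particular presentation and applying $\da$ once more returns $A_1^{\da\da}\xto{\phi^{\da\da}}A_0^{\da\da}\to\atr\atr X\to 0$; the naturality of the homothety maps $\eta_{A_i}$ (which are isomorphisms because $A_i\in\A\sbe\G_C$) identifies this with the original presentation of $X$, so $\atr\atr X\cong X$ for this choice. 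Uniqueness up to $\A$-equivalence then gives $\atr\atr X\sim X$ for any choice. The main obstacle is the construction in (3); everything else is a straightforward dualization or application of the resolving axioms.
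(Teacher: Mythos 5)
Your proof is correct, and for parts (1), (2), and (4) it follows essentially the same route as the paper. The interesting divergence is in (3). The paper starts from a \emph{projective} presentation of $Y$ and a separate $\A$-presentation of $A$, lifts to a $3\times 3$ diagram of presentations via the snake lemma, dualizes the whole diagram, and applies the snake lemma again to produce $\atr X\cong\atr Y$. Your argument instead fixes one $\A$-presentation $B_1\xto{\phi_Y}B_0\to Y\to 0$, passes to the pullback $B_0'=B_0\times_Y X=\ker(B_0\to A)$, notes that $B_0'\in\A$ by the resolving axiom applied to $0\to B_0'\to B_0\to A\to 0$, and observes that $\phi_Y$ corestricts to an $\A$-presentation $B_1\xto{\phi_X}B_0'\to X\to 0$ of $X$; then one dualization plus the vanishing $\ext^1(A,C)=0$ shows $\iota^\da$ is surjective so $\im\phi_Y^\da=\im\phi_X^\da$ and the two cokernels coincide on the nose. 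This sidesteps the second snake-lemma computation and is genuinely more economical: it produces a single pair of compatible $\A$-presentations rather than a full diagram of projective ones, and it makes transparent exactly where the hypothesis $A\in\G_C$ (hence $\ext^1(A,C)=0$) enters. The only price is that you must separately invoke the preceding well-definedness proposition to pass from ``equal for these presentations'' to $\A$-equivalent in general — but the paper's approach has the same dependence implicitly, since it also computes $\atr$ from one specific pair of presentations. Both arguments are sound; yours is arguably the cleaner one for (3).
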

\begin{proof}
It suffices to assume that $0\to X\to Y\to A\to 0$ with $A\in\A$.  Proving (1) is trivial.   For suitable choices of syzygies, we have   $0\to \zz X\to \zz Y \to \zz A\to 0$.  Since $\zz A$ is in $\A$,  and since syzygies are unique up to stable, and hence $\A$-equivalence, this proves (2).  

Now we show (3).  Consider the diagram with exact rows
\[\xymatrix{
& 				& P_1 \ar[d] \ar[r]					& P_0 \ar[d] \ar[r]			& Y \ar[d] \ar[r]		& 0\\
& 0 \ar[r]			& \zz A  \ar[r]						& Q \ar[r]				& A \ar[r]				& 0\\
}\]
with $Q,P_0,P_1$ projective and surjective vertical arrows.  Using the snake lemma, we can extend this to the diagram
\[\xymatrix{
&				& 0 \ar[d]						& 0 \ar[d]				& 0 \ar[d]			&\\
& 				& B_1 \ar[d] \ar[r]					& B_0 \ar[d] \ar[r]			& X	\ar[d] \ar[r]		& 0\\
& 				& P_1 \ar[d] \ar[r]					& P_0 \ar[d] \ar[r]			& Y	\ar[d] \ar[r]		& 0\\
& 0 \ar[r]			& \zz A \ar[d] \ar[r]					& Q \ar[d] \ar[r]			& A	\ar[d] \ar[r]		& 0\\	
&				& 0 								& 0 						& 0					&\\	
}\]
such that $B_1,B_0$ are in $\A$. Applying $\da$ to this diagram gives the following.
\[\xymatrix{	
& 			& 						& 0	\ar[d]					& 0	\ar[d]							& 							& 	\\
& 0 \ar[r]		& A^\da \ar[r]				& Q^\da \ar[d] \ar[r]			& (\zz A)^\da \ar[d] \ar[r]				& \ext^1( A,C) \ar[r] \ar[d]		& 0\\
& 0 \ar[r]		& Y^\da \ar[r]				& P_0^\da \ar[d] \ar[r]		& P_1^\da \ar[d] \ar[r]				& \atr Y \ar[r]	\ar[d]			& 0\\
& 0 \ar[r]		& X^\da \ar[r]				& B_0^\da \ar[d] \ar[r]		& B_1^\da \ar[d] \ar[r]				& \atr X \ar[r] \ar[d]			& 0.\\
&			& 				 		& 0						& 0								& 0 							& 	\\
}\]
Since $\ext^1(A,C)=0$, applying the snake lemma to the middle two columns yields $\atr X\cong\atr Y$.  This proves (3).

To see (4), consider the projective presentation $P_1\to P_0\to X\to 0$.  Then $P_0^\da\to P_1^\da\to \atr X\to 0$ is an $\A$ presentation.  Using this presentation, to compute $\atr\atr X$ gives the result.

\end{proof}

We close this section with an example of a property shared by $\atr$ and $\tr$. 

 \begin{lemma}\label{2.3}
 
Let $0\to X\to Y\to Z\to0$ be an exact sequence in $\md R$.  For suitable choices of $\atr$, we have the exact sequence
\[0\to Z^\da\to Y^\da\to X^\da\to \atr Z\to\atr Y\to \atr X\to 0.\]
Furthermore, if $\ext^i(X,C)=0$, then we have
\[0\to\atr\zz^i Z\to\atr\zz^i  Y\to\atr\zz^i X\to 0.\]

\end{lemma}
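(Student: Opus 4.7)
The plan is to build compatible projective presentations of $X$, $Y$, $Z$ via the horseshoe lemma, dualize, and read the first six-term sequence off a snake-lemma computation; then, for the furthermore part, to apply this to the short exact sequence of $i$-th syzygies and use the hypothesis $\ext^i(X,C)=0$ to force the relevant connecting map to vanish.

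Concretely, I choose projective presentations $P_1(X)\to P_0(X)\to X\to 0$ and $P_1(Z)\to P_0(Z)\to Z\to 0$ and apply the horseshoe lemma to obtain a compatible projective presentation $P_1(X)\op P_1(Z)\to P_0(X)\op P_0(Z)\to Y\to 0$. Since $\PP\sbe\A$, these qualify as $\A$-presentations, and by the preceding proposition they legitimately compute $\atr$. Applying $\hm(-,C)$ to the resulting $3\times 3$ diagram, the rows -- being split short exact -- dualize to split short exact sequences, while each column becomes a two-term complex $P_0(W)^\da\to P_1(W)^\da$ whose kernel is $W^\da$ (left exactness of $\hm$) and whose cokernel is $\atr W$ (definition of $\atr$). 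Taking the long exact sequence in cohomology of this short exact sequence of two-term complexes -- equivalently, performing the snake lemma -- yields exactly
\[0\to Z^\da\to Y^\da\to X^\da\to \atr Z\to\atr Y\to\atr X\to 0.\]

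For the furthermore, I iterate the horseshoe to obtain $0\to\zz^i X\to\zz^i Y\to\zz^i Z\to 0$ and feed this into the first part. The resulting six-term sequence collapses to a three-term short exact sequence of $\atr$'s precisely when $(\zz^i Y)^\da\to(\zz^i X)^\da$ is surjective. To verify this, I apply $\hm(-,C)$ to $0\to\zz^i W\to P_{i-1}(W)\to\zz^{i-1}W\to 0$; since $\ext^{>0}(P_{i-1}(W),C)=0$ and $\ext^1(\zz^{i-1}W,C)=\ext^i(W,C)$, this produces the exact sequence
\[(\zz^{i-1}W)^\da\to P_{i-1}(W)^\da\to(\zz^i W)^\da\to\ext^i(W,C)\to 0.\]
Hence $P_{i-1}(W)^\da\twoheadrightarrow(\zz^i W)^\da$ whenever $\ext^i(W,C)=0$, which under the hypothesis applies to $W=X$. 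Because the horseshoe forces $P_{i-1}(Y)=P_{i-1}(X)\op P_{i-1}(Z)$, the projection $P_{i-1}(Y)^\da\to P_{i-1}(X)^\da$ is a split surjection, and a short chase in the evident commutative square -- using the surjectivity just obtained for $X$ -- yields the required surjectivity of $(\zz^i Y)^\da\to(\zz^i X)^\da$.

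The main subtlety, more than a true obstacle, is confirming that the horseshoe-produced presentations genuinely compute $\atr$ (a consequence of $\PP\sbe\A$ together with the preceding proposition on well-definedness up to $\A$-equivalence), and that in the furthermore part the map under scrutiny really is the connecting map obstructing the collapse of the six-term sequence. Once these compatibilities are in place, the rest is a routine dualize-and-chase.
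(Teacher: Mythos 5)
Your proposal is correct and follows essentially the same line as the paper: build compatible presentations via the horseshoe lemma, dualize, and apply the snake lemma to get the six-term sequence, then reduce the ``furthermore'' to the surjectivity of $(\zz^i Y)^\da\to(\zz^i X)^\da$. The one place you diverge slightly is in how that surjectivity is established: the paper runs the long exact $\ext$ sequence of $0\to\zz^iX\to\zz^iY\to\zz^iZ\to 0$ and shows the connecting map into $\ext^1(\zz^iZ,C)$ vanishes by comparing with $\ext^{i+1}(Z,C)\to\ext^{i+1}(Y,C)$ (injective since $\ext^i(X,C)=0$), whereas you observe directly that $\ext^i(X,C)=0$ forces $P_{i-1}(X)^\da\twoheadrightarrow(\zz^iX)^\da$ and then chase the square coming from the split inclusion $P_{\bullet}(X)\hookrightarrow P_{\bullet}(Y)$. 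Both are routine; your chase does implicitly rely on the commutativity of that square, which holds because the horseshoe inclusion is a chain map restricting to $\zz^iX\hookrightarrow\zz^iY$, and it would be worth spelling this out in a final write-up.
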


\begin{proof}

Let $\ta$ denote the map from $Y$ to $Z$.   We have the short exact sequence $0\to\zz^i X\to\zz^i Y\xto{\zz^i\ta}\zz^i Z\to 0$ for all $i\ge 0$.  We can construct the following short exact sequence of $\A$ presentations.
\[\xymatrix{
& 0	\ar[d]					& 0	\ar[d]				& 0	\ar[d]						& \\
& A^0_1 \ar[d] \ar[r]			& A^0_0 \ar[d] \ar[r]		& \zz^i X	 \ar[d] \ar[r]				& 0\\
& A^1_1 \ar[d] \ar[r]			& A^1_0 \ar[d] \ar[r]		& \zz^{i} Y \ar[d]^{\zz^i\ta} \ar[r]		& 0\\
& A^2_1 \ar[d] \ar[r]			& A^2_0 \ar[d] \ar[r]		& \zz^i Z	 \ar[d] \ar[r]				& 0\\		
& 0							& 0						& 0								& \\
}\]
Applying $\da$ and also the snake lemma yields 
\[0\to(\zz^i Z)^\da\xto{(\zz^i\ta)^\da}\zz^{i} Y\xto{\ld}\zz^i X^\da \xto{\ep}\atr\zz^i Z\xto{\eta} \atr\zz^{i} Y\to\atr\zz^i X \to0.\] 
Setting $i=0$ at this stage gives us the first claim.  The short exact sequence $0\to \zz^i X\to\zz^{i}  Y\to\zz^i Z\to0$ gives the following long exact sequence of $\ext$ modules.   
\[0\to(\zz^i Z)^\da\xto{(\zz^i\ta)^\da}\zz^{i}  Y\xto{\ld}\zz^i X^\da\xto{\de}\ext^1(\zz^i Z,C)\xto{\ext^1(\zz^i\ta,C)}\ext^1(\zz^{i} Y,C)\to\cdots\]
We also have
\[\cdots\to \ext^i(X,C)\to \ext^{i+1}(Z,C)\xto{\ext^{i+1}(\ta,C)}\ext^{i+1}(Y,C)\to\cdots.\]
Since $\ext^i(X,C)=0$ by assumption,  $\ext^{i+1}(\ta,C)$ and $\ext^1(\zz^i\ta,C)$ are injective, forcing $\de$ to be zero.  Thus $\ld$ is surjective.  Then the first long exact sequence shows that $\ep$ is zero, and so $\eta$ is injective, giving the desired result.

\end{proof}

\section{Resolving Subcategories which are Maximal Cohen Macaulay On The Punctured Spectrum}\label{delta}
We keep the same conventions used in the previous section, except we also assume that  $(R,\mm,k)$ is a Noetherian local ring.  Recall that since $\A$ is a thick subcategory of $\G_C$, according to Proposition \ref{ABform}, dimension with respect to $\A$ satisfies the Auslander Buchsbaum formula.  Set $\Ares M=\res(\{M\}\cup\A)$, $\DA_0=\{M\in\DA\mid M_p\in\add\A_\p\  \forall p\in\spec R\del\mm\}$, and $\DA_0^i=\{M\in\DA_0\mid \A\hdim M\le i\}$.  This section is devoted to proving the following.  
\begin{theorem}\label{filtration} 
If $(R,\mm,k)$ is a local ring with $\dim R=d$, the filtration 
\[\A=\DA_0^0\sbne \DA_0^1\sbne\cdots\sbne\DA_0^d=\DA_0\]
 is a complete list of the resolving subcategories of $\DA_0$ containing $\A$.
\end{theorem}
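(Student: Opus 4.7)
The plan has three parts: verify that each member of the filtration is a resolving subcategory of $\DA_0$ containing $\A$; establish strictness of the inclusions; and prove completeness.

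First I would check that each $\DA_0^i$ is resolving. It contains $R \in \A$, and closure under direct sums, summands, extensions, and syzygies follows from the dimension inequalities for resolving subcategories established after Proposition \ref{dim}, combined with the fact that localization commutes with these operations up to projectives (which already lie in $\A$), so the pointwise condition $M_\p \in \add \A_\p$ for $\p \ne \mm$ is preserved. Strictness of the inclusions is obtained by producing, for each $i \in \{1, \ldots, \depth R\}$, a module of $\A$-dimension exactly $i$ inside $\DA_0$: any finite-length module $W$ with finite $\A$-dimension automatically lies in $\DA_0$ (trivially zero on the punctured spectrum), and by the Auslander--Buchsbaum formula (Proposition \ref{ABform}) satisfies $\A\hdim W = \depth R$; its iterated syzygies then realize every intermediate dimension.

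For completeness, let $\X$ be a resolving subcategory with $\A \sbe \X \sbe \DA_0$ and set $i := \sup \{\A\hdim X : X \in \X\}$. The inclusion $\X \sbe \DA_0^i$ is tautological; I would prove the reverse by induction on $i$. The base case $i = 0$ is immediate from Corollary \ref{1.1}, which forces every $X \in \X$ to lie in $\A$. For the inductive step, $\X \cap \DA_0^{i-1}$ is resolving with supremum $\A$-dimension $i - 1$, so by induction $\DA_0^{i-1} \sbe \X$; it then suffices to show that every $Y \in \DA_0$ with $\A\hdim Y = i$ lies in $\X$.

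To achieve this I would construct a short exact sequence $0 \to Y \to U \to V \to 0$ with $U, V \in \X$; closure of $\X$ under kernels of surjections then forces $Y \in \X$. Applying the Auslander--Buchweitz approximation (Theorem \ref{hull}) to $\A$ cogenerating itself produces $0 \to Y \to A' \to M \to 0$ with $M \in \A$ and $A' \in \DA$ of $\A$-dimension $i$; by closure of $\add \A_\p$ under extensions (Lemma \ref{loc}), $A'$ even lies in $\DA_0$, so $A' \in \DA_0^i$. The main obstacle is to show that $A'$ (or some equivalent replacement) lies in $\X$. Here I would exploit the generalized transpose $\atr$ of Section \ref{delta'} together with a fixed witness $X_0 \in \X$ of $\A$-dimension $i$: the finite-length modules $\ext^i(Y, C)$ and $\ext^i(X_0, C)$, supported only at $\mm$ thanks to the punctured-spectrum condition, together with the transpose identities $\atr\atr(-) \sim (-)$ and the exact sequences of Lemma \ref{2.3}, should allow an explicit construction of the needed sequence using a finite direct sum of copies of $X_0$ augmented by an element of $\A$. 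Verifying this reduction, by tracking how $\atr$ interacts with the punctured-spectrum condition and with the closure of $\A$ under $\da$ and its thickness in $\G_C$, is the technical heart of the argument.
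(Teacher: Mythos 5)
Your high-level decomposition (each $\DA_0^i$ is resolving, the inclusions are strict, and any intermediate resolving subcategory equals one of the $\DA_0^i$) is reasonable and matches the flavor of the paper's argument, but the completeness step contains a genuine gap that you yourself flag, and the heuristic you offer for closing it does not amount to a proof.

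Concretely: in your inductive step you reduce to showing that a fixed $Y \in \DA_0$ with $\A\hdim Y = i$ lies in any resolving $\X$ with $\A \sbe \X \sbe \DA_0$ whose supremum $\A$-dimension is $i$. Applying the Auslander--Buchweitz hull to $Y$ relative to $\A$ cogenerating itself just returns $Y$ (up to an $\A$-summand), so it produces no new module, and the ``fixed witness'' $X_0 \in \X$ is not obviously comparable to $Y$. The actual bridge, which your sketch does not supply, is the statement that the resolving closure $\Ares X$ of \emph{any single} $X \in \DA_0^n \setminus \DA_0^{n-1}$ is all of $\DA_0^n$. The paper proves this (Corollary \ref{filtration'}) by first establishing Proposition \ref{2.9}, namely $\DA_0^n = \Ares \atr\zz^{n-1} L$ for every nonzero finite-length $L$, and this in turn rests on a cascade of nontrivial lemmas: Lemma \ref{2.5} (an explicit $\A$-resolution of $\atr\zz^n L$ with controlled kernels), Lemma \ref{2.6} ($\Ares\atr\zz^n L = \Ares\atr\zz^n k$), and Lemma \ref{2.41} (the recursion relating $\Ares(X, \atr\ext^1(X,C), \ldots)$ and $\Ares(\atr\zz^n\atr\zz^n X, \ldots)$). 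In the paper's proof of Theorem \ref{filtration} itself, the reduction chain goes: $\Ares X = \DA_0^n$ is equivalent to $\atr\zz^{n-1}\ext^n(X,C) \in \Ares X$; the $\A$-presentation coming from $\zz^n X \in \A$ shows $\atr\zz^{n-1}X \sim \ext^n(X,C)$, so this becomes $\atr\zz^{n-1}\atr\zz^{n-1}X \in \Ares X$; and Lemma \ref{2.41} together with the inductive hypothesis $\Ares\zz X = \DA_0^{n-1}$ finishes. Your phrase ``should allow an explicit construction of the needed sequence using a finite direct sum of copies of $X_0$ augmented by an element of $\A$'' does not reflect this structure -- there is no single short exact sequence $0 \to Y \to U \to V \to 0$ that does the job; the argument is inherently a recursion through $n$ layers of transposes and syzygies. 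To make your proposal into a proof you would essentially have to rediscover Proposition \ref{2.9} and Lemma \ref{2.41}.

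One smaller point: when you claim $\DA_0^{i-1} \sbe \X$ in the inductive step, you need to first verify that $\sup\{\A\hdim Z : Z \in \X \cap \DA_0^{i-1}\} = i - 1$; this does hold (take $\zz X$ for any $X \in \X$ with $\A\hdim X = i$), but it should be said. Your treatment of ``each $\DA_0^i$ is resolving'' and of strictness via finite-length modules is fine.
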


This theorem and its proof is a generalisation of \cite[Theorem 2.1]{DaoTakahashi13}.  We now use results from the previous section to make the building blocks of the proof of Theorem \ref{filtration}.  
\begin{lemma}\label{2.2}
For any module $X\in\md R$, for suitable choices of $\atr X$ and $\zz\atr\zz X$, we have
\[0\to\ext^1(X,C)\to\atr X\to \zz\atr\zz X\to 0.\]
\end{lemma}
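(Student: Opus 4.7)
The plan is to produce a compatible set of presentations by starting with a single projective resolution of $X$. Choose $\cdots\to P_2\to P_1\to P_0\to X\to 0$. This simultaneously furnishes a projective presentation of $X$ (via $P_1\to P_0\to X\to 0$) and one of $\zz X:=\ker(P_0\to X)$ (via $P_2\to P_1\to\zz X\to 0$), so that I may define $\atr X:=\coker(P_0^\da\to P_1^\da)$ and $\atr\zz X:=\coker(P_1^\da\to P_2^\da)$ from the \emph{same} data; these will be the ``suitable choices.''

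The main input is obtained by applying $\hm(-,C)$ to $0\to\zz X\to P_0\to X\to 0$. Since $\ext^{>0}(P_0,C)=0$, this gives the four-term exact sequence
\[0\to X^\da\to P_0^\da\to (\zz X)^\da\to \ext^1(X,C)\to 0,\]
so $\ext^1(X,C)\cong (\zz X)^\da/\im(P_0^\da\to(\zz X)^\da)$. Moreover, dualizing the surjection $P_1\twoheadrightarrow\zz X$ produces an injection $(\zz X)^\da\hookrightarrow P_1^\da$ through which the map $P_0^\da\to P_1^\da$ factors as $P_0^\da\to(\zz X)^\da\hookrightarrow P_1^\da$.

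Now I would take the short exact sequence $0\to(\zz X)^\da\to P_1^\da\to P_1^\da/(\zz X)^\da\to 0$ and mod out by the submodule $\im(P_0^\da)$, which is contained in $(\zz X)^\da$ by the factorization above. A one-line snake-lemma argument (or a direct check) yields the quotient sequence
\[0\to (\zz X)^\da/\im(P_0^\da)\to P_1^\da/\im(P_0^\da)\to P_1^\da/(\zz X)^\da\to 0,\]
which, after the identifications already made, reads
\[0\to \ext^1(X,C)\to \atr X\to P_1^\da/(\zz X)^\da\to 0.\]

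The remaining step is to identify the rightmost term with a suitable first syzygy of $\atr\zz X$, and this is the only subtle point. Dualizing $P_2\to P_1\to\zz X\to 0$ gives the exact sequence $0\to(\zz X)^\da\to P_1^\da\to P_2^\da\to\atr\zz X\to 0$, which decomposes as
\[0\to P_1^\da/(\zz X)^\da\to P_2^\da\to\atr\zz X\to 0.\]
Thus $P_1^\da/(\zz X)^\da$ is the kernel of a surjection onto $\atr\zz X$ from $P_2^\da\in\add C\sbe\A$. By a pullback with a genuine projective cover $Q\twoheadrightarrow\atr\zz X$ and Schanuel's lemma, $P_1^\da/(\zz X)^\da$ differs from any honest projective syzygy $\zz\atr\zz X$ by modules lying in $\A$, so it is $\A$-equivalent to $\zz\atr\zz X$ and qualifies as a ``suitable choice'' in the sense used throughout Section \ref{delta'}. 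The main obstacle I expect is making this last identification rigorous and clean against the backdrop of the paper's flexible use of ``suitable choices of syzygies,'' but once the factorization through $(\zz X)^\da$ is in hand, the sequence itself drops out almost mechanically.
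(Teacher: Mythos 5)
Your proposal is correct and follows essentially the same route as the paper: both start from a projective resolution $P_2\to P_1\to P_0\to X\to 0$, obtain the exact sequence $0\to\ext^1(X,C)\to\atr X\to P_1^\da/(\zz X)^\da\to 0$ (the paper by a snake-lemma diagram comparing $\im g^\da$ and $\ker f^\da$, you by the third isomorphism theorem after factoring $P_0^\da\to P_1^\da$ through $(\zz X)^\da$), and then identify $P_1^\da/(\zz X)^\da=\im f^\da$ with $\zz\atr\zz X$ up to $\A$-equivalence via a pullback against a projective cover of $\atr\zz X$. One small caution: literal Schanuel's lemma does not apply here since $P_2^\da\in\add C$ is not projective, but the pullback already produces short exact sequences $0\to P_1^\da/(\zz X)^\da\to T\to Q\to 0$ and $0\to\zz\atr\zz X\to T\to P_2^\da\to 0$ with cokernels in $\A$, which is exactly the $\A$-equivalence you need and is precisely the diagram the paper draws.
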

\begin{proof}
With $F_0,F_1,F_2$ projective, consider the sequence
\[F_2\xto{f} F_1\xto{g} F_0\to X\to 0.\]
We have $\coker g^\da=\atr X$.  By the universal property of kernel and cokernel, we have the following commutative diagram.  
\[\xymatrix{
& 0 \ar[r]		& \im g^\da \ar[r] \ar[d]^{\io}		& F_1^\da \ar[r]	 \ar@{=}[d]	& \atr X \ar[r] \ar[d]^{\ep}		& 0\\
& 0 \ar[r]		& \ker f^\da \ar[r]					& F_1^\da \ar[r]				& \im f^\da \ar[r]				& 0\\
}\]
The snake lemma yields the exact sequence
\[0\to\ker\io\to 0\to\ker\ep\to\ext^1(X,C)\to0\to\coker\ep\to0.\]
Thus $\ep$ is surjective and $\ker \ep\cong\ext^1(X,C)$, giving the exact sequence $0\to \ext^1(X,C)\to\atr X\to \im f^\da\to 0$.  It remains to show that $\im f^\da\sim \zz\atr\zz X$.  

We have the short exact sequence $0\to \im f^\da	\to F_2^\da\to \atr \zz X\to 0$.  Consider the pushout diagram
\[\xymatrix{
		&		 					& 0 \ar[d]					& 0 \ar[d]				& \\
		& 							& \zz\atr \zz X \ar@{=}[r] \ar[d]	& \zz\atr\zz X  \ar[d]		&	\\
0\ar[r]	& \im f^\da \ar[r]	\ar@{=}[d]	& T \ar[r] \ar[d]				& G \ar[r] \ar[d] 			& 0\\	
0\ar[r]	& \im f^\da \ar[r]				& F_2^\da \ar[r] \ar[d]			& \atr\zz X \ar[r] \ar[d]		& 0\\
		&		 					& 0 							& 0						& \\
}\]
with $G$ projective.  We have $\im f^\da\sim T\sim \zz\atr\zz X$ as desired

\end{proof}
\begin{lemma}\label{2.4}
If $X\in\DA_0$, for all $0\le i<\depth C$, for suitable choices of $\atr$, the following is  exact.
\[0\to\atr\zz^{i+1}\atr\zz^{i+1}X\to\atr\zz^i\atr\zz^i X\to\atr\zz^i\ext^{i+1}(X,C)\to0\]
\end{lemma}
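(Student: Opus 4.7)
The plan is to produce the desired three-term exact sequence by applying Lemma~\ref{2.3} to the short exact sequence supplied by Lemma~\ref{2.2}. The main technical point will be verifying the $\ext$-vanishing hypothesis of Lemma~\ref{2.3}; once that is secured, the rest is routine dimension-shifting and syzygy identifications.

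First, I would apply Lemma~\ref{2.2} with $\zz^i X$ in place of $X$. Using the dimension-shifting isomorphism $\ext^1(\zz^i X,C)\cong\ext^{i+1}(X,C)$, this yields
\[0\to\ext^{i+1}(X,C)\to\atr\zz^i X\to\zz\atr\zz^{i+1}X\to 0.\]
I would then apply Lemma~\ref{2.3} to this short exact sequence with the functor $\atr\zz^i$. The hypothesis required by that lemma, read off with $\ext^{i+1}(X,C)$ playing the role of the leftmost term, is $\ext^i(\ext^{i+1}(X,C),C)=0$. Granting this (see below), Lemma~\ref{2.3} produces
\[0\to\atr\zz^i(\zz\atr\zz^{i+1}X)\to\atr\zz^i\atr\zz^i X\to\atr\zz^i\ext^{i+1}(X,C)\to 0,\]
and for suitable choices of syzygies the identity $\zz^i\zz\cong\zz^{i+1}$ rewrites the leftmost term as $\atr\zz^{i+1}\atr\zz^{i+1}X$, which is the claimed sequence.

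It remains to justify the $\ext$-vanishing, which I expect to be the only subtle step. Since $X\in\DA_0$, for every non-maximal prime $\p$ the localization $X_\p$ lies in $\add\A_\p\sbe\G_{C_\p}$, so $\ext^{i+1}(X,C)_\p=\ext^{i+1}(X_\p,C_\p)=0$ for $i\ge 0$. Thus $\ext^{i+1}(X,C)$ is a finitely generated module supported only at $\mm$, hence of finite length. By the standard $\ext$-characterization of depth, $\ext^j(k,C)=0$ for all $j<\depth C$; a composition series by copies of $k$ together with the long exact sequence in $\ext$ propagates this vanishing to any finite length module. Applying this with $j=i<\depth C$ to the finite length module $\ext^{i+1}(X,C)$ supplies exactly the hypothesis needed for Lemma~\ref{2.3}, and the proof is complete.
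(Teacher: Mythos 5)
Your proof is correct and follows the same route as the paper: apply Lemma~\ref{2.2} with $\zz^i X$ in place of $X$, observe that $X\in\DA_0$ forces $\ext^{i+1}(X,C)$ to have finite length and hence satisfies $\ext^i(\ext^{i+1}(X,C),C)=0$ for $i<\depth C$, then apply Lemma~\ref{2.3}. The only difference is that you spell out the depth/composition-series argument for the $\ext$-vanishing, which the paper leaves implicit.
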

\begin{proof}
Using Lemma \ref{2.2}, we have
\[0\to\ext^{i+1}( X,C)\to\atr \zz^iX\to \zz\atr\zz^{i+1} X\to 0.\]
Since $X\in\DA_0$,  $\ext^{i+1}( X,C)_p=0$ for every nonmaximal prime $p$.  Thus $\ext^{i+1}( X,C)$ has finite length, and so $\ext^i(\ext^{i+1}(X,C),C)=0$ for all $0\le i< \depth  C$.  Thus, we can apply Lemma \ref{2.3}.
\end{proof}

\begin{lemma}\label{2.41}

Let $X\in \DA_0$ and $0<n\le \depth C$, we have
\begin{align*}
\Ares(X,\atr\ext^1(X,C),& \atr \zz\ext^2(X,C),\cdots,\atr\zz^{n-1}\ext^n(X,C))\\
&= \Ares(\atr\zz^n\atr\zz^n X,\atr\ext^1(X,C),\atr \zz\ext^2(X,C),\cdots,\atr\zz^{n-1}\ext^n(X,C))
 \end{align*}
 
\end{lemma}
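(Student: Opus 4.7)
The plan is to use the exact sequences of Lemma \ref{2.4} as a ``ladder'' connecting $X$ (through its double transpose $\atr\atr X$) with $\atr\zz^n\atr\zz^n X$ inside any resolving subcategory that already contains the error terms $\atr\zz^{i-1}\ext^i(X,C)$ for $i=1,\dots,n$.

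The first step is to replace $X$ by $\atr\atr X=\atr\zz^0\atr\zz^0 X$ on the left hand side of the claimed equality. This is harmless because Lemma \ref{5.1}(4) gives $\atr\atr X\sim X$, and Lemma \ref{5.1}(1) then forces $\Ares X=\Ares(\atr\atr X)$; more generally, $\A$-equivalent modules generate the same resolving subcategory, which also confirms that the statement is unambiguous despite $\atr$ being defined only up to $\A$-equivalence.

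The main ingredient is then the following. For each $0\le i\le n-1$, the hypothesis $n\le\depth C$ lets Lemma \ref{2.4} supply the short exact sequence
\[0\to\atr\zz^{i+1}\atr\zz^{i+1}X\to\atr\zz^i\atr\zz^i X\to\atr\zz^i\ext^{i+1}(X,C)\to 0.\]
In a resolving subcategory $\M$ containing $\atr\zz^i\ext^{i+1}(X,C)$, closure under extensions gives one implication, while condition (3) of the definition of resolving (which allows us to conclude that the kernel term lies in $\M$ once the middle and right terms do) gives the other. So membership of $\atr\zz^i\atr\zz^i X$ in $\M$ is equivalent to membership of $\atr\zz^{i+1}\atr\zz^{i+1}X$.

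Telescoping this equivalence for $i=0,1,\dots,n-1$ yields: any resolving subcategory containing $\A$ and each $\atr\zz^{i-1}\ext^i(X,C)$ ($i=1,\dots,n$) contains $\atr\zz^0\atr\zz^0 X$ if and only if it contains $\atr\zz^n\atr\zz^n X$. Both sides of the desired equality are such resolving subcategories, and each is generated over that common data by exactly one of those two modules, so the two sides coincide. I do not expect any serious obstacle beyond the careful bookkeeping of the ``iff'' statements through the ladder; the essential work is done by Lemma \ref{2.4}, which is already in hand.
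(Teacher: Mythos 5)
Your proposal is correct and matches the paper's own proof: both use the short exact sequences of Lemma \ref{2.4} as a ladder connecting $\atr\atr X$ to $\atr\zz^n\atr\zz^n X$ step by step, telescope, and then invoke Lemma \ref{5.1} together with $\atr\atr X\sim X$ to replace $X$ at the end. Your explicit unpacking into the two directions of axiom (3) of a resolving subcategory is precisely what the paper's chain of $\Ares$-equalities encodes.
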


\begin{proof}

The previous lemma tells us that 
\begin{align*}
\Ares((\atr \atr X,\atr\ext^1(X,C)) &=\Ares(\atr\zz\atr\zz X,\atr\ext^1(X,C))\\
\Ares(\atr\zz\atr\zz X,\atr \zz\ext^2(X,C)) &=\Ares(\atr\zz^2\atr\zz^2 X,\atr\zz\ext^2(X,C))\\
&\vdots\\
 \Ares(\atr\zz^{n-1}\atr\zz^{n-1} X,\atr\zz^{n-1}\ext^n(X,C))&=\Ares(\atr\zz^n\atr\zz^n X,\atr\zz^{n-1}\ext^n(X,C))
 \end{align*}
Since $\atr\atr X\sim X$, the result is now clear.

\end{proof}

\begin{lemma}\label{2.5}
Let $0\le n<\depth R$ and $L$ be a nonzero finite length module.  There exists an $\A$-resolution $(G_\bullet,\dell^{L,n})$ of $\atr\zz^n L$ such that $G_i=0$ for all $i>n+1$ and 
\[\ker\dell_{i}^{L,n}=\atr \zz^{n-i}L\]
 for all $1\le i\le n$.  In particular, $\atr\zz^i L\in\Ares(\atr\zz^n L)$ for all $0\le i\le n$, $\A\hdim(\atr\zz^n L)=n+1$, and  $\atr\zz^n L\in \DA_0^{n+1}$.
 \end{lemma}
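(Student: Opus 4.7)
My strategy is to construct the desired $\A$-resolution by dualizing a projective resolution of $L$, exploiting the vanishing of $\ext^i(L,C)$ for $i<\depth R$.

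Fix a projective resolution $\cdots\to F_1\to F_0\to L\to 0$. For each $j\ge 0$, the two-term truncation $F_{j+1}\to F_j\to \zz^j L\to 0$ is an $\A$-presentation of $\zz^j L$, so $\atr\zz^j L = \coker(F_j^\da\to F_{j+1}^\da)$. Since $L$ is a nonzero module of finite length and $C\in\G_C$ satisfies $\depth C=\depth R$ by Proposition \ref{ABform}, we have $\ext^i(L,C)=0$ for $0\le i<\depth R$. Combined with the hypothesis $n<\depth R$, this shows the complex
\[0\to F_0^\da\to F_1^\da\to\cdots\to F_{n+1}^\da\to \atr\zz^n L\to 0\]
is exact. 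Setting $G_i := F_{n+1-i}^\da$ for $0\le i\le n+1$ and $G_i=0$ otherwise gives the required $\A$-resolution of length $n+1$.

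I would then identify the syzygies. A direct computation using exactness of the dual complex at the relevant positions yields, for each $1\le i\le n$,
\[\ker\dell_i^{L,n}=\im(F_{n+1-i}^\da\to F_{n+2-i}^\da)\cong\coker(F_{n-i}^\da\to F_{n+1-i}^\da)=\atr\zz^{n-i}L.\]
Because syzygies of $\atr\zz^n L$ taken through an $\A$-resolution lie in $\Ares(\atr\zz^n L)$, the inclusion $\atr\zz^i L\in\Ares(\atr\zz^n L)$ for $0\le i\le n$ follows immediately. For the local behavior, at any non-maximal prime $p$ we have $L_p=0$ since $L$ has finite length; the localized projective resolution of $0$ splits, so $(\zz^n L)_p$ is projective over $R_p$, and hence $(\atr\zz^n L)_p\cong \atr_{R_p}((\zz^n L)_p)=0\in\add\A_p$, confirming $\atr\zz^n L\in\DA_0$.

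The main obstacle is the lower bound $\A\hdim(\atr\zz^n L)\ge n+1$. I would argue by contradiction: if $\A\hdim(\atr\zz^n L)\le m$ for some $0\le m\le n$, then Proposition \ref{dim} applied to the constructed resolution forces the $m$-th syzygy---which by the identification above is $\atr\zz^{n-m}L$ (interpreted as $\atr\zz^n L$ itself when $m=0$)---to lie in $\A\sbe\G_C$. Since $\atr$ vanishes on modules in $\G_C$, Lemma \ref{5.1}(4) gives $\zz^{n-m}L\sim \atr\atr\zz^{n-m}L=0$, hence $\zz^{n-m}L\in\Ares(0)=\A\sbe\G_C$. However, iterating the depth lemma on the syzygy short exact sequences $0\to\zz^{k+1}L\to F_k\to \zz^k L\to 0$ starting from $\depth L=0$ shows $\depth\zz^{n-m}L=n-m<\depth R$, contradicting the Auslander--Buchsbaum formula (Proposition \ref{ABform}) which forces modules in $\G_C$ to have depth $\depth R$. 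Combined with the upper bound from the resolution, this yields $\A\hdim(\atr\zz^n L)=n+1$, and therefore $\atr\zz^n L\in\DA_0^{n+1}$.
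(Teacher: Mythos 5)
Your proof is essentially the same as the paper's: dualize a projective resolution of $L$, use $\ext^{\le n}(L,C)=0$ (from finite length and $\depth C=\depth R$) to get exactness, splice to obtain an $\A$-resolution of length $n+1$, and identify the syzygies as the modules $\atr\zz^j L$. The one place you diverge is in proving the lower bound $\A\hdim(\atr\zz^n L)\ge n+1$. The paper's route is short: if the top syzygy $\atr L$ were in $\A$, then $L\sim\atr\atr L$ forces $L\in\A\sbe\G_C$, which is impossible since $L^\da=\hm(L,C)=0$ for a nonzero finite length $L$. Your route reaches the same contradiction by a depth count: $\zz^{n-m}L$ would have to lie in $\A$, hence satisfy the Auslander--Buchsbaum formula and have depth $\depth R$, but iterating the depth lemma gives $\depth\zz^{n-m}L=n-m<\depth R$. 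Both are sound; the paper's is a bit more direct (one needs only $m=n$, since $\A$ is closed under syzygies), while yours is slightly more general and uses Proposition~\ref{ABform} where the paper uses the definition of $\G_C$. You also explicitly justify $\atr\zz^n L\in\DA_0$ via localization at nonmaximal primes, a step the paper leaves tacit in this lemma.

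Two small slips worth fixing. First, the line ``since $\atr$ vanishes on modules in $\G_C$'' is false for $\G_C$ in general (e.g.\ $\atr M$ need not even lie in $\G_C$ for $M\in\G_C\setminus\A$); what is true, and all you use, is that $\atr M\sim 0$ for $M\in\A$, obtainable from the trivial $\A$-presentation $0\to M\xto{\id}M\to 0$. Second, the displayed identity $\ker\dell_i^{L,n}=\im(F_{n+1-i}^\da\to F_{n+2-i}^\da)$ literally equates a kernel with the image of the \emph{same} map $\dell_i^{L,n}$, which is wrong as written; what you compute correctly is the $i$-th syzygy $\im\dell_i^{L,n}\cong\coker\dell_{n+1-i}^\da=\atr\zz^{n-i}L$. (The source shares this kernel/image indexing wrinkle; the mathematical content---that the syzygies of $\atr\zz^n L$ in $G_\bullet$ are precisely the modules $\atr\zz^j L$ for $0\le j\le n$, with $G_{n+1}=F_0^\da\in\add C$---is what is actually used and is correct in both.)
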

\begin{proof}
Let $(F_\bullet,\dell)$ be a free resolution of $L$.  Then we have
\[F_{n+1}\to F_n\to \zz^n L\to 0\quad\quad0\to\zz^n L\to F_{n-1}\xto{\dell_{n-1}}\cdots\xto{\dell_2}F_1\xto{\dell_1} F_0\to L\to 0.\]
Because $L$ has finite length, and since $\depth C=\depth R$ by Proposition \ref{ABform}, we have $\ext^i(L,C)=0$ for all $0\le i\le n$, and so we have the exact sequence
\[0\to L^\da\to F_0^\da\xto{\dell_1^\da}F_1^\da\xto{\dell_2^\da}\cdots\xto{\dell_{n-1}^\da}F_{n-1}^\da\to(\zz^n L)^\da\to 0.\]
Note that $L^\da=0$ since $L$ has finite length.  Thus, splicing this exact sequence with $0\to(\zz^n L)^\da\to F_n^\da\xto{\dell_{n+1}^\da} F_{n+1}^\da\to\atr\zz^n L\to 0$, we create an $\A$-resolution of $\atr\zz^n L$.  So we set $G_i=F_{n+1-i}^\da$ for $0\le i\le n+1$ and $G_i=0$ for $i>n+1$.  Set $\dell^{L,n}_i=\dell_{n+2-i}^\da$ for $1\le i\le n+1$ and $\dell^{L,n}_i=0$ for all $i>n+1$.  Using our previous arguments for values less that $n$, we see that  $\ker\dell_{i}^{L,n}=\atr \zz^{n-i}L$ for  $0\le i\le n$.  Showing the first two claims.  

It is now apparent that $\A\hdim\atr \zz^{n} L\le n+1$.  If $\ker\dell_n^{L,n}=\atr L$ is in $\A$, then so is $L$ since $\atr\atr L\sim L$.  However, this is impossible since $L^\da=\ext^0(L,C)=0$.

\end{proof}
\begin{lemma}\label{2.6}
For all $0\le n<\depth R$ and all nonzero finite length modules $L$, $\Ares \atr\zz^n L=\Ares\atr\zz^n k$.
\end{lemma}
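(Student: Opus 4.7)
The plan is to proceed by induction on the length $m$ of $L$.  The base case $m=1$ is trivial since $L\cong k$.

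For the inductive step with $m\ge 2$, I would choose a short exact sequence $0\to L'\to L\to k\to 0$ with $L'$ of length $m-1$.  Since $L'$ has finite length and $n<\depth R=\depth C$ (by Proposition \ref{ABform}), filtering $L'$ by copies of $k$ and using $\ext^i(k,C)=0$ for $i<\depth C$ yields $\ext^n(L',C)=0$.  Lemma \ref{2.3} then provides the short exact sequence
\[
0\to\atr\zz^n k\to\atr\zz^n L\to\atr\zz^n L'\to 0,\qquad (\ast)
\]
and the inductive hypothesis gives $\Ares\atr\zz^n L'=\Ares\atr\zz^n k$.

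The forward inclusion $\Ares\atr\zz^n L\sbe\Ares\atr\zz^n k$ is immediate: both outer terms of $(\ast)$ lie in $\Ares\atr\zz^n k$, the term $\atr\zz^n L'$ by the inductive hypothesis, so closure under extensions forces $\atr\zz^n L\in\Ares\atr\zz^n k$.

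The reverse inclusion $\Ares\atr\zz^n k\sbe\Ares\atr\zz^n L$ requires $\atr\zz^n k\in\Ares\atr\zz^n L$.  The natural reduction is to establish $\atr\zz^n L'\in\Ares\atr\zz^n L$; then the resolving axiom applied to $(\ast)$, with the quotient $\atr\zz^n L'$ in the subcategory, delivers $\atr\zz^n k$ as the kernel.  This auxiliary membership is the main obstacle: the inductive hypothesis only places $\atr\zz^n L'$ in $\Ares\atr\zz^n k$, and promoting this to $\Ares\atr\zz^n L$ is precisely what we are trying to prove.  The plan to break the circularity is to exploit the explicit $\A$-resolution of Lemma \ref{2.5}: a horseshoe free resolution of $0\to L'\to L\to k\to 0$ has $F_i^L=F_i^{L'}\op F_i^k$ in each degree, and dualizing produces compatible $\A$-resolutions of the three $\atr\zz^n$-modules that agree degreewise, with an off-diagonal twist in the differential of the middle one.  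By forming iterated self-extensions of $\atr\zz^n L$ (which all remain in $\Ares\atr\zz^n L$) and taking direct summands---mirroring the mechanism by which, over a DVR, $R/x$ emerges as a direct summand of an iterated extension of $R/x^2$ inside $\res(R/x^2)$---one expects to realize $\atr\zz^n L'$ as a summand of a module already in $\Ares\atr\zz^n L$.  Feeding this back into $(\ast)$ then closes the induction.
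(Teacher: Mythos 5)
The forward inclusion and the identification of the circularity problem are both correct, but the proposed fix for the reverse inclusion is a hand-wave with no workable mechanism behind it, and it omits the second induction that the argument actually requires.

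The paper does not try to show $\atr\zz^n L'\in\Ares\atr\zz^n L$ directly. Instead it invokes the explicit $\A$-resolution $(G_\bullet,\dell^{L',n})$ of $\atr\zz^n L'$ from Lemma \ref{2.5}, giving $0\to\ker\dell_1^{L',n}\to G_0\to\atr\zz^n L'\to 0$ with $G_0\in\A$, and forms the pullback of this against your sequence $(\ast)$. The pullback module $T$ then sits in two short exact sequences: $0\to\ker\dell_1^{L',n}\to T\to\atr\zz^n L\to 0$ (so $T\in\Ares\atr\zz^n L$ as soon as $\ker\dell_1^{L',n}$ is) and $0\to\atr\zz^n k\to T\to G_0\to 0$ (so then $\atr\zz^n k\in\Ares\atr\zz^n L$, since $G_0\in\A$). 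This entirely sidesteps the need for $\atr\zz^n L'$ itself to lie in $\Ares\atr\zz^n L$; the only thing one must control is $\ker\dell_1^{L',n}$. Lemma \ref{2.5} identifies this kernel as $\atr\zz^{n-1}L'$ when $n>0$ (and as $G_1\in\A$ when $n=0$), which is a module one step down in $n$. This is why the paper runs a \emph{double} induction on $\ld(L)$ and on $n$: the inner $n$-induction, together with the containment $\Ares\atr\zz^{n-1}L\sbe\Ares\atr\zz^n L$ from Lemma \ref{2.5}, delivers $\ker\dell_1^{L',n}\in\Ares\atr\zz^n L$ and closes the argument.

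Your proposed workaround — realizing $\atr\zz^n L'$ as a direct summand of an iterated self-extension of $\atr\zz^n L$ via a horseshoe-style comparison of resolutions — is not an argument that closes the gap. The DVR phenomenon you cite ($R/x$ appearing as a summand inside $\res(R/x^2)$) is a module-theoretic accident special to that situation; nothing in the general setup guarantees that iterating extensions of $\atr\zz^n L$ with itself produces a module with $\atr\zz^n L'$ as a summand, and you give no reason it would. Moreover your plan has no induction on $n$ at all, whereas the module $\ker\dell_1^{L',n}\cong\atr\zz^{n-1}L'$ that the pullback produces lives one level lower in $n$, so that second induction is structurally essential. Without the pullback reduction and the $n$-induction, the circularity you flagged is not actually broken.
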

\begin{proof}
Let $\ld$ denote the length function.  If $L\ne 0$, then we can write $0\to L'\to L\to k\to 0$ with $\ld(L')<\ld(L)$.  Since by Proposition \ref{ABform} $n< \depth R=\depth C$, we have $\ext^n(L',C)=0$, and so from Lemma \ref{2.3}, we have 
\[0\to\atr\zz^n k\to\atr\zz^n L\to\atr\zz^n L'\to 0.\]
Thus, by induction we have $\Ares\atr\zz^n L\sbe \Ares\atr\zz^n k$.

Now we wish to show $\atr\zz^n k\in\Ares\atr\zz^n L$. We proceed by double induction first on $\ld(L)$ and then on $n$.  The case $L=k$ is trivial, so suppose $\ld(L)>1$.   Write $0\to L'\to L\to k\to 0$ again.  Since $L'$ has depth zero, we can use Lemma \ref{2.5} to get the resolution $(G_\bullet,\dell^{L'})$.  Thus we have the exact sequence 
\[0\to \ker\dell_1^{L',n}\to G_0\to \atr\zz^n L\to 0.\]
Taking the pullback diagram with our last exact sequence yields the following.
\[\xymatrix{
& 					& 									& 0 \ar[d]							& 0 \ar[d]						& \\
& 					& 									& \ker\dell_1^{L',n} \ar@{=}[r] \ar[d]		& \ker\dell_1^{L',n} \ar[d]			& \\
& 0 \ar[r]				& \atr\zz^n k \ar[r] \ar@{=}[d]			& T \ar[r] \ar[d]						& G_0  \ar[r] \ar[d]					& 0\\
& 0 \ar[r]				& \atr\zz^n k \ar[r]						& \atr\zz^n L \ar[r] \ar[d]				& \atr\zz^n L'  \ar[r] \ar[d]			& 0.\\
& 					& 									& 0									& 0								& \\
}\]
It is now easy to see that it suffices to  show that $\ker\dell_1^{L',n}$ is in $\Ares\atr\zz^n L$. When $n=0$, $(G_\bullet,\dell^{L',n})$ is the resolution 
\[0\to G_1\xto{\dell_1^{L',0}} G_0\to\atr L'\to 0,\]
and we are done since $\ker\dell_1^{L',0}=G_1\in \A\sbe\Ares\atr L$.  So  suppose $n>0$.    We have $\ker\dell_1^{L',n}=\atr\zz^{n-1} L'$, by Lemma \ref{2.5}. By induction, $\Ares\atr\zz^{n-1}  L$ and $\Ares\atr\zz^{n-1} L'$ are the same as $\Ares\atr\zz^{n-1}  k$.  So we have $\ker\dell_1^{L',n}\in\Ares\atr\zz^{n-1}  L\sbe\Ares\atr\zz^{n}  L$, where the inclusion follows from Lemma \ref{2.5}, and we are done. 
\end{proof}

These next proofs are similar to those in \cite{DaoTakahashi13} with the appropriate changes.  They are included here for the sake of completeness.
\begin{prop}\label{2.9}
For every $0< n\le \depth R$, we have $\DA_0^n=\Ares\atr\zz^{n-1} L$ for every nonzero finite length module $L$. 
\end{prop}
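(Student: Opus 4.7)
The plan is to prove the stated equality by establishing two inclusions, after first reducing the choice of $L$. By Lemma~\ref{2.6}, the subcategory $\Ares \atr\zz^{n-1} L$ does not depend on the choice of nonzero finite length $L$, so it suffices to take $L = k$. Observe also that $\DA_0^n$ is resolving: both $\{M \mid M_p \in \add \A_p\}$ for each nonmaximal $p$ and $\{M \mid \A\hdim M \le n\}$ are resolving by the results of Section~\ref{alpha}, so their common intersection with $\DA$ is resolving.

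The forward inclusion $\Ares \atr\zz^{n-1}k \sbe \DA_0^n$ is immediate, since Lemma~\ref{2.5} shows $\atr\zz^{n-1}k$ lies in $\DA_0^n$.

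For the reverse inclusion, fix $X \in \DA_0^n$. Because $\depth C = \depth R \ge n$ by applying Proposition~\ref{ABform} to $C$ itself, Lemma~\ref{2.41} yields
\[\Ares\bigl(X,\, \atr\ext^1(X,C),\, \ldots,\, \atr\zz^{n-1}\ext^n(X,C)\bigr) = \Ares\bigl(\atr\zz^n\atr\zz^n X,\, \atr\ext^1(X,C),\, \ldots,\, \atr\zz^{n-1}\ext^n(X,C)\bigr).\]
I would then show every module on the right side lies in $\Ares \atr\zz^{n-1}k$. For the term $\atr\zz^n\atr\zz^n X$: the hypothesis $\A\hdim X \le n$ together with Proposition~\ref{dim} puts $\zz^n X \in \A$, and for any $A \in \A$ the trivial $\A$-presentation (with $A_1 = 0$, $A_0 = A$) shows $\atr A$ is $\A$-equivalent to $A^\da$, which lies in $\A$ because $\A$ is closed under $\da$; iterating gives $\atr\zz^n\atr\zz^n X \in \A$. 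For each $\atr\zz^{i-1}\ext^i(X,C)$ with $1 \le i \le n$: since $X \in \DA_0$ and $\add \A_p \sbe \G_{C_p}$, one has $\ext^i(X,C)_p = 0$ for every nonmaximal $p$, so $\ext^i(X,C)$ has finite length; if it is nonzero, Lemma~\ref{2.6} gives $\Ares \atr\zz^{i-1}\ext^i(X,C) = \Ares \atr\zz^{i-1}k$, and Lemma~\ref{2.5} places $\atr\zz^{i-1}k$ inside $\Ares \atr\zz^{n-1}k$ since $i - 1 \le n - 1$. Combining these, $X \in \Ares \atr\zz^{n-1}k$.

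The main obstacle is not conceptual, since the earlier lemmas have done all the heavy lifting, but rather organizational: one must verify carefully that the index ranges $0 \le i - 1 < n \le \depth R$ meet the hypotheses of Lemmas~\ref{2.5},~\ref{2.41}, and~\ref{2.6} simultaneously, and that each auxiliary term produced by Lemma~\ref{2.41} is absorbed into $\Ares \atr\zz^{n-1}k$ via the chain $\atr k,\, \atr\zz k,\, \ldots,\, \atr\zz^{n-1}k$ supplied by Lemma~\ref{2.5}.
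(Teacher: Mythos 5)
Your proof is correct and follows essentially the same route as the paper's: reduce to $L=k$ via Lemma~\ref{2.6}, use Lemma~\ref{2.5} for the inclusion $\Ares\atr\zz^{n-1}k\sbe\DA_0^n$ and the chain $\Ares\atr\zz^{i}k\sbe\Ares\atr\zz^{n-1}k$, and use Lemma~\ref{2.41} to decompose $X$ into pieces that land in $\Ares\atr\zz^{n-1}k$. The only notable difference is that you supply more explicit detail for why $\atr\zz^n\atr\zz^n X\in\A$ (namely that $\atr$ preserves $\A$ since $\A$ is closed under $\da$), which the paper asserts without elaboration.
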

\begin{proof}
By Lemma \ref{2.6}, we may assume that $L=k$.  By Lemma \ref{2.5}, we know that $\A\hdim(\tr\zz^{n-1} k)=n$.  Since localization commutes with cokernels, duals and syzygies, we have $\tr\zz^n k$ is in $\DA_0$ and hence in $\DA_0^n$.  Suppose $X\in \DA_0^n$.  Then $\zz^n X\in \A$, and so $\atr \zz^n \atr \zz^n X\in \A$.  Furthermore, for each  $i\ge0$,   $\ext^{i+1}(X,C)$ has finite length.  Hence, Lemma \ref{2.6} implies that $\atr\zz^i \ext^{i+1}(X,C)$ is in     $\Ares \atr\zz^i k\sbe\Ares\atr\zz^{n-1} k$, where the inclusion follows from Lemma \ref{2.5}.  By Lemma \ref{2.41}, we therefore have
\[X\in \Ares(\atr\zz^n\atr\zz^n X,\atr\ext^1(X,C),\atr \zz\ext^2(X,C),\cdots,\atr\zz^{n-1}\ext^n(X,C))\sbe \Ares\atr\zz^{n-1} k\]
which concludes the proof.

\end{proof}

We now prove the main result of this section.

\begin{proof}[Proof of Theorem \ref{filtration}]

We clearly have the chain $\A=\DA_0^0\sbne \DA_0^1\sbne\cdots\sbne\DA_0^d=\DA_0$.  Take $X\in\DA_0^n\del\DA_0^{n-1}$ for $d\ge n\ge 1$.  We need to show that $\Ares X=\DA_0^n$, and we have $\Ares X\sbe \DA_0^n$. We proceed by induction.  When $n=0$, the statement is  trivial.  So assume that $n>0$ and $\Ares \zz X=\DA_0^{n-1}$.  Since $\ext^n(X,C)$ has finite length, it suffices to show $\atr\zz^{n-1} \ext^n(X,C)\in\Ares X$, by Proposition \ref{2.9}. 

Since $\zz^n X\in \A$, the short exact sequence $0\to \zz^n X\to P\to \zz^{n-1} X\to 0$, with $P$ projective, is an $\A$ presentation of $\zz^{n-1} X$.  Using this presentation to compute $\atr$, we see that $\atr \zz^{n-1} X\sim \ext^1(\zz^{n-1}X,C)\cong \ext^n(X,C)$.  Therefore $\atr\zz^{n-1}\atr\zz^{n-1}X\sim \atr \zz^{n-1}\ext^n(X,C)$ by Lemma \ref{5.1}.  Therefore, it suffices to show that  $\atr\zz^{n-1}\atr\zz^{n-1}X\in \Ares X$, again by Lemma \ref{5.1}.

Let $0<i\le n-1$.  Since $\ext^i(X,C)$ has finite length, Lemma \ref{2.5} implies 
\[\atr\zz^{i-1}\ext^i(X,C)\in \DA_0^i\sbe \DA_0^{n-1}=\Ares\zz X\sbe \Ares X .\]
 Therefore, Lemma \ref{2.41} implies that
\[\atr\zz^{n-1}\atr\zz^{n-1}X\in\Ares(X,\atr\ext^1(X,C),\atr\zz\ext^2(X,C),\cdots,\atr\zz^{n-2}\ext^{n-1}(X,C))=\Ares(X)\]
as claimed.

\end{proof}

The following corollary is immediate from Theorem \ref{filtration}
\begin{corollary}\label{filtration'}
If $X\in \DA_0^n\del\DA_0^{n-1}$, then $\Ares X=\DA_0^n$.  
\end{corollary}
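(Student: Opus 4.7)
The plan is to deduce this directly from Theorem \ref{filtration}. The corollary follows almost mechanically once we recognize that $\Ares X$ must appear in the complete list of resolving subcategories of $\DA_0$ containing $\A$ provided by that theorem, and then pin down which member of the list it is by using the hypothesis $X \in \DA_0^n \setminus \DA_0^{n-1}$.

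First I would verify that $\Ares X$ is a resolving subcategory of $\DA_0$ containing $\A$. By definition $\Ares X = \res(\{X\} \cup \A)$, so it contains $\A$ and is resolving. To see that it is contained in $\DA_0$, note that $\DA_0$ itself is a resolving subcategory (this is where one should cite closure properties: $\DA_0$ is closed under extensions, syzygies, summands, and contains $R$, the latter because $\A \subseteq \DA_0$). Since $X \in \DA_0^n \subseteq \DA_0$ and $\A \subseteq \DA_0$, we have $\{X\} \cup \A \subseteq \DA_0$, hence $\Ares X \subseteq \DA_0$.

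Next, Theorem \ref{filtration} asserts that the only resolving subcategories of $\DA_0$ containing $\A$ are the $\DA_0^k$ for $0 \le k \le d$. Therefore $\Ares X = \DA_0^k$ for some $k$. Since $X \in \Ares X$, we have $X \in \DA_0^k$, and since by assumption $X \notin \DA_0^{n-1}$, this forces $k \ge n$. Conversely, $X \in \DA_0^n$ and $\A \subseteq \DA_0^n$, so $\{X\} \cup \A \subseteq \DA_0^n$, which is resolving; by minimality of $\Ares X$ we obtain $\Ares X \subseteq \DA_0^n$, i.e.\ $k \le n$. Combining these gives $k = n$, so $\Ares X = \DA_0^n$.

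There is essentially no obstacle here, as the heavy lifting has been done in Theorem \ref{filtration}; the only mild subtlety is the verification that $\DA_0^n$ is itself resolving (needed for the inclusion $\Ares X \subseteq \DA_0^n$), but this is built into the chain displayed in the theorem.
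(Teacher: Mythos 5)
Your proof is correct and matches the (implied) argument of the paper: the paper simply remarks that the corollary is immediate from Theorem \ref{filtration}, and what you spell out is exactly the natural deduction — since $\Ares X$ is a resolving subcategory of $\DA_0$ containing $\A$, it must equal some $\DA_0^k$ in the theorem's list, and the two inequalities $k\ge n$ (from $X\notin\DA_0^{n-1}$) and $k\le n$ (from $\Ares X\sbe\DA_0^n$, which holds since $\DA_0^n$ is resolving) pin down $k=n$. It is worth noting that the paper's proof of Theorem \ref{filtration} actually establishes the corollary's statement ($\Ares X = \DA_0^n$ for $X\in\DA_0^n\del\DA_0^{n-1}$) as its central step and then derives the classification from it, so the two statements are essentially equivalent and the "immediacy" runs in both directions.
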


\section{Resolving Subcategories and Semidualizing Modules}\label{epsilon}
In this section, we keep the same notations and conventions as the previous sections, except we will not assume that $R$ is local.  In this section, we classify the resolving subcategories of $\DA$ which contain $\A$.  Note that it is easy to check that $C_p$ is a semidualizing $R_p$-module for all $p\in\spec R$.  In Lemma \ref{6.2}, we will see that for all $p\in\spec R$, $\add\A_p$ is a thick subcategory of $\G_{C_p}$  closed under $\hm_{R_p}(-,C_p)$.  The following is a modified version of \cite[Lemma 4.6]{DaoTakahashi12}, which is a generalisation of \cite[Proposition 4.2]{Takahashi09}.  For a module $X$, let $\NA(X)=\{p\in\spec R\mid X_p\notin\add\A_p\}$.
\begin{prop}\label{replace}
Suppose $X\in \DA$.  For every $p\in\NA(X)$, there is a $Y\in \Ares X$ such that $\NA(Y)=\V(p)$ and $\add \A_\pi\hdim Y_\pi=\add \A_\pi\hdim X_\pi$ for all $\pi\in V(p)$.
\end{prop}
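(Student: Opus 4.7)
The plan is to adapt the proof of \cite[Lemma 4.6]{DaoTakahashi12}, which handles the case $\A=\PP$, to the present setting where $\A$ is a thick subcategory of $\G_C$. The first observation is that $\NA(Y)$ is specialization-closed in $\spec R$ for every $Y\in\md(R)$: if $Y_\pi\in\add\A_\pi$ and $\pi'\sbe\pi$, then $Y_{\pi'}=(Y_\pi)_{\pi' R_\pi}\in\add\A_{\pi'}$. In particular $\V(p)\sbe\NA(X)$, so the task reduces to trimming $\NA(X)\setminus\V(p)$ down to the empty set while leaving the $\add\A_\pi$-dimensions at $\pi\in\V(p)$ untouched.

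The construction proceeds by Noetherian induction on the closed subset $\overline{\NA(X)\setminus \V(p)}$ of $\spec R$. The base case $\NA(X)\sbe\V(p)$ is handled by $Y=X$. For the inductive step, choose a prime $q$ minimal in $\NA(X)\setminus\V(p)$; since $q\not\spe p$, prime avoidance furnishes an element $y\in p\setminus q$. From the short exact sequence $0\to X\xto{y}X\to X/yX\to 0$ (after first replacing $X$ by a sufficiently high syzygy, if necessary, so that $y$ may be taken to be a non-zero divisor --- available via depth considerations and Proposition \ref{ABform}), the horseshoe lemma produces an exact sequence $0\to \zz X\to \zz X\op F\to \zz(X/yX)\to 0$ with $F$ free. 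Since $\Ares X$ is closed under extensions and both $\zz X$ and $\zz X\op F$ lie in $\Ares X$, we obtain $\zz(X/yX)\in\Ares X$. Localized at $q$, multiplication by $y$ is an isomorphism on $X_q$, so $(X/yX)_q=0$ and hence $\zz(X/yX)_q$ is free, landing in $\add\A_q$; thus $q\notin\NA(\zz(X/yX))$.

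To preserve the dimension at primes of $\V(p)$, set $Y':=X\op\zz(X/yX)\in\Ares X$. Since $X$ is a direct summand, $\add\A_\pi\hdim Y'_\pi\ge\add\A_\pi\hdim X_\pi$ for every $\pi$. For the reverse inequality at $\pi\in\V(p)$, note $y\in p\sbe\pi$, and apply the dimension inequalities from Section \ref{alpha} to the localized horseshoe sequence to conclude $\add\A_\pi\hdim\zz(X/yX)_\pi\le\add\A_\pi\hdim X_\pi$; combined with the lower bound this gives equality on $\V(p)$. Since $\NA(Y')\sbe\NA(X)$ is specialization-closed and omits $q$, the closed set $\overline{\NA(Y')\setminus\V(p)}$ is strictly smaller than $\overline{\NA(X)\setminus\V(p)}$ by minimality of $q$; Noetherian induction on $\spec R$ terminates the process in finitely many steps, producing the desired $Y$.

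The main obstacle is ensuring exact dimension-preservation at each $\pi\in\V(p)$ throughout the induction, rather than merely an upper bound. The lower bound is automatic from making $X$ a summand at every stage; the upper bound requires genuine analysis of the horseshoe sequence after localization at $\pi\supseteq p\ni y$. A secondary technical subtlety is arranging $y\in p$ to be a non-zero divisor on the module at hand (possibly after passing to a syzygy), which relies on the Auslander--Buchsbaum formula of Proposition \ref{ABform} and standard depth manipulations.
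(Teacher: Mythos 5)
Your construction has two gaps, one of which is immediately fatal.

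The fatal one is the choice $Y':=X\op\zz(X/yX)$. Because $X$ is a direct summand of $Y'$, localizing at any prime $\pi$ exhibits $X_\pi$ as a summand of $Y'_\pi$, so $\NA(Y')\spe\NA(X)$. In particular $q\in\NA(X)\sbe\NA(Y')$, directly contradicting your claim that $\NA(Y')$ omits $q$; the closed set $\overline{\NA(Y')\setminus\V(p)}$ does not shrink and the Noetherian induction never makes progress. Keeping $X$ as a summand buys you the lower bound $\add\A_\pi\hdim Y'_\pi\ge\add\A_\pi\hdim X_\pi$ for free, but it also freezes the non-$\add\A$ locus. You cannot have both from this particular $Y'$.

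The second gap is the suggestion to pass to a high syzygy so that $y$ becomes a non-zero divisor. This step is genuinely needed for your horseshoe sequence (if $y$ is a zero-divisor on $X$ there is no short exact sequence $0\to X\xto{y}X\to X/yX\to0$ to resolve), and it can genuinely fail to be avoidable, e.g. when $p\in\mathrm{Ass}(X)$, in which case no $y\in p$ is regular on $X$. But replacing $X$ by $\zz^m X$ is not harmless: at $\pi\in\V(p)$ one has $\add\A_\pi\hdim(\zz^m X)_\pi=\max\{0,\add\A_\pi\hdim X_\pi-m\}$, which is strictly smaller than $\add\A_\pi\hdim X_\pi$ for $m>0$, so the very invariant you are trying to preserve is lost before the induction even starts.

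The paper sidesteps both issues at once by working with the pushout of the syzygy sequence $0\to\zz X\to F\to X\to0$ along multiplication by $x$ on $\zz X$, producing a module $Y$ sitting in $0\to\zz X\to Y\to X\to 0$. This construction requires no regularity hypothesis on $x$, and $Y$ does not contain $X$ as a summand, so $\NA(Y)$ can (and does) shrink: locally at $q'$ one has $Y_{q'}\in\res X_{q'}$, giving $\NA(Y)\sbe\NA(X)$, and at $q$ the element $x$ is a unit so $Y_q\cong F_q$ is free, removing $q$ from $\NA$. The dimension invariant at $\pi\in\V(p)$ is then obtained via the depth identity $\depth Y_\pi=\min\{\depth X_\pi,\depth R_\pi\}$ from the proof of Lemma 4.6 of Dao--Takahashi, combined with the Auslander--Buchsbaum formula of Proposition \ref{ABform}; the elementary syzygy-dimension inequalities of Section \ref{alpha} alone do not yield the needed lower bound at $\pi\in\V(p)$ once $X$ is no longer a summand. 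If you want to repair your argument, replace $Y'=X\op\zz(X/yX)$ by the paper's pushout $Y$ and control dimensions via depth rather than via the inequalities of Section \ref{alpha}.
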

\begin{proof}
If $\NA(X)=\V(p)$ we are done.  So fix a $q\in \NA(X)\del\V(p)$.   As in the proof of \cite[Lemma 4.6]{DaoTakahashi12}, choose an $x\in p\del q$ and consider the following pushout  diagram.
\[\xymatrix{
& 0 \ar[r]				& \zz X \ar[d]^{x} \ar[r]				& F \ar[d] \ar[r]	 	& X \ar@{=}[d] \ar[r]	& 0\\
& 0 \ar[r]				& \zz X \ar[r]						& Y \ar[r]				& X \ar[r] \ar[r]		& 0\\
}\]
with $F$ projective.  Immediately, we have $Y\in\Ares X$. Furthermore, $Y_{q'}\in \res X_{q'}$ for all $q'\in\spec R$.  Therefore, we have  $\NA(Y)\sbe \NA(X)$.  The proof of \cite[Lemma 4.6] {DaoTakahashi12} tells us that 
\[\depth(Y_\pi)=\min\{\depth(X_\pi),\depth(R_\pi)\}\]
for all $\pi\in V(p)$.  Thus, by Proposition \ref{ABform}, $\add \A_\pi\hdim Y_\pi=\add \A_\pi\hdim X_\pi$, for all $\pi\in V(p)$.
%
In particular, this shows that $V(p)$ is contained in $\NA(Y)$.

Localizing at $q$, yields the following.
\[\xymatrix{
& 0 \ar[r]				& \zz X_q \ar[d]^{x} \ar[r]				& F_q \ar[d] \ar[r]	 	& X_q \ar@{=}[d] \ar[r]	& 0\\
& 0 \ar[r]				& \zz X_q \ar[r]						& Y_q \ar[r]			& X_q \ar[r] \ar[r]		& 0\\
}\]
Note $x$ is a unit in $R_q$.  Thus, by the five lemma, $Y_q$ is isomorphic to $F_p$ and therefore is projective.  So we have $q\notin\NA(Y)$ and hence $\NA(Y)\sbne\NA(X)$.  

If $\NA(Y)\ne\V(p)$, then we may repeat this process and construct a $Y'$ that, like $Y$, satisfies all the desired properties except $\V(p)\sbe\NA(Y')\sbne\NA(Y)\sbne\NA(X)$.  Since  $\spec R$ is Noetherian, this process must stabilize after some iteration, producing the desired module.
\end{proof}
\begin{lemma}\label{sequence}
Let $V$ be a nonempty finite subset of $\spec R$.  Let $M$ be a module  and $\X$ a resolving subcategory such that $M_p\in\add\X_p$ for some $p\in\spec R$.  Then there exists exact sequences 
\[0\to K\to X\to M\to 0\quad\quad\quad 0\to L\to M\op K\op R^t\to X\to 0\]
with $X\in\X$ and $\NA(L)\sbe\NA(M)$ and $\NA(L)\cap V=\emptyset$.  
\end{lemma}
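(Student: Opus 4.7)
Plan. My plan is to first construct an appropriate surjection $X\twoheadrightarrow M$ with $X\in\X$ using the given local data, interpreting the hypothesis as providing for each $p\in V$ (not only for the single prime asserted) a witness in $\X$; then to derive the second sequence from a free cover and analyze $L$ one prime at a time.

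For the first sequence, for each $p\in V$ fix $Y^{(p)}\in\X$ together with local maps $\alpha^{(p)}_p:M_p\hookrightarrow (Y^{(p)})_p$ and $\beta^{(p)}_p:(Y^{(p)})_p\twoheadrightarrow M_p$ with $\beta^{(p)}_p\alpha^{(p)}_p=1_{M_p}$. Clearing denominators lifts these to global maps $\tilde\alpha^{(p)}:M\to Y^{(p)}$ and $\tilde\beta^{(p)}:Y^{(p)}\to M$ whose composition localizes at $p$ to multiplication by a unit of $R_p$. Combining the $\tilde\beta^{(p)}$'s with a free surjection $R^n\twoheadrightarrow M$ yields a surjection
\[\pi: X := \Bigl(\bigoplus_{p\in V} Y^{(p)}\Bigr)\oplus R^n\twoheadrightarrow M,\]
with $X\in\X$. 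Using prime avoidance over the finite set $V$, one assembles a global map $\sigma:M\to X$ whose localization $\sigma_p$ at each $p\in V$ is, up to a unit of $R_p$, a splitting of $\pi_p$ landing in the corresponding $Y^{(p)}$-summand. Set $K:=\ker\pi$ to obtain the first exact sequence.

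For the second sequence, let $\phi:R^t\twoheadrightarrow X$ be a free cover and define
\[\mu:M\oplus K\oplus R^t\longrightarrow X,\qquad (m,k,r)\longmapsto \sigma(m)+i(k)+\phi(r),\]
where $i:K\hookrightarrow X$ is the inclusion; $\mu$ is surjective since $\phi$ is, and I set $L:=\ker\mu$.

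To verify the two $\NA$-conditions, localize at a prime $p$. If $p\in V$, the local splitting identifies $X_p\cong M_p\oplus K_p$, under which $\mu_p$ becomes $(m,k,r)\mapsto (m+a_p(r),k+b_p(r))$ with $\phi_p(r)=(a_p(r),b_p(r))$; its kernel is visibly isomorphic to $R_p^t$, which lies in $\add\A_p$ because $R\in\A$, so $\NA(L)\cap V=\emptyset$. If instead $p\notin\NA(M)$, so $M_p\in\add\A_p$, ensure (by choosing each $Y^{(p)}$ inside $\Ares M$ when possible, or by padding with free summands) that $X_p\in\add\A_p$; then the resolving closure of $\add\A_p$ (Lemma \ref{loc}) applied to $0\to K_p\to X_p\to M_p\to 0$ yields $K_p\in\add\A_p$, and applied to $0\to L_p\to M_p\oplus K_p\oplus R_p^t\to X_p\to 0$ yields $L_p\in\add\A_p$, giving $\NA(L)\subseteq\NA(M)$. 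The main obstacle is the globalization step: producing a single surjection $\pi$ and a single map $\sigma$ that behave well at every $p\in V$ simultaneously, since local sections at different primes do not glue naturally and must be combined via prime avoidance on the finite set $V$, with all scaling factors chosen to remain units at the primes under consideration.
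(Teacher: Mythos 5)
The paper's own ``proof'' is a pointer: it invokes the construction in Takahashi's Proposition~4.7, observing that (i) $V$ lies in the free locus of $L$, and, crucially, (ii) the last exact sequence in that construction forces $L_p\in\res M_p$ for \emph{every} prime $p$. Point (ii) is what delivers $\NA(L)\sbe\NA(M)$, since $M_p\in\add\A_p$ implies $\res M_p\sbe\add\A_p$ (Lemma~\ref{loc}), hence $L_p\in\add\A_p$. Your attempt rebuilds everything from scratch, which is a genuinely different route, but it does not reproduce property (ii), and that is where the argument breaks.

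Concretely: with $X=\bigl(\bigoplus_{p\in V}Y^{(p)}\bigr)\op R^n$, the two exact sequences give only $L_q\in\res(M_q,X_q)$ at an arbitrary prime $q$. To conclude $L_q\in\add\A_q$ when $M_q\in\add\A_q$, you need $X_q\in\add\A_q$, i.e.\ you need each $(Y^{(p)})_q\in\add\A_q$. Nothing in the construction controls $(Y^{(p)})_q$ at primes $q\ne p$: the $Y^{(p)}$ were chosen in $\X$ solely to have $M_p$ as a local summand at the single prime $p$. Your two proposed patches do not close this. Padding with free summands is useless here, because $\add\A_q$ is closed under summands, so $(Y^{(p)}\op R^m)_q\in\add\A_q$ iff $(Y^{(p)})_q\in\add\A_q$. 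And ``choosing $Y^{(p)}$ inside $\Ares M$'' is not available under the hypotheses: the lemma demands $X\in\X$, and there is no reason for $\X\cap\Ares M$ to contain a module whose localization at $p$ has $M_p$ as a summand (you would need something like $\A\sbe\X$ and $M\in\X$, neither of which is assumed). So the inclusion $\NA(L)\sbe\NA(M)$ is not established.

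A secondary gap, which you flag yourself, is the globalization of $\sigma$: you would need a single map $\sigma:M\to X$ with $\pi_p\sigma_p$ a unit multiple of the identity for every $p\in V$ simultaneously, and the cross-terms $\tilde\beta^{(p_i)}\tilde\alpha^{(p_i)}$ at the ``wrong'' primes make a na\"{\i}ve prime-avoidance choice of coefficients inconclusive, especially when $V$ contains comparable primes. Naming the obstacle is not the same as resolving it. In short, your local verification that $L_p\cong R_p^t$ for $p\in V$ is sound once a splitting is in hand, but the overall proof is incomplete: the globalization step is unproved, and the key uniformity $X_q\in\res M_q$ (or at least $X_q\in\add\A_q$ whenever $M_q\in\add\A_q$) is absent, so the conclusion $\NA(L)\sbe\NA(M)$ does not follow.
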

\begin{proof}
The result is essentially contained in the proof of \cite[Proposition 4.7]{Takahashi10}.  It shows the existence of the exact sequences and shows that $V$ is contained in the free locus of $L$ and thus $\NA(L)\cap V=\emptyset$.  Furthermore, the last exact sequence in the proof shows that for any $p\in \spec R$, $L_p$ is in $\res M_p$.  Hence, if $L_p$ is not in a resolving subcategory, then $M_p$ cannot be in that category as well, giving us $\NA(L)\sbe\NA(M)$.
\end{proof}

These lemmas help to prove the following proposition which is a key component of the proof of Theorem \ref{main2}.  This next result is also where we use Corollary \ref{filtration'} of the last section.  
\begin{prop}
Consider a module $M\in\md(R)$ and a resolving subcategory $\X\in\R(\A)$.  If for every $p\in\spec R$, there exists an $X\in\X$ such that $\add \A_\p\hdim M_p \le \add \A_\p\hdim X_p$, then $M$ is in $\X$.  
\end{prop}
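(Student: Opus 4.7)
The plan is to prove $M \in \X$ by induction on the specialization-closed subset $\NA(M) = \{p \in \spec R \mid M_p \notin \add \A_p\}$ of $\spec R$, ordered by inclusion; this induction terminates because $\spec R$ is Noetherian (and, for $M$ of finite $\A$-dimension, $\NA(M)$ is Zariski-closed as the support of an appropriate finitely generated module). The base case $\NA(M) = \emptyset$ is immediate from Lemma \ref{local}: it yields $M \in \A \sbe \X$.

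For the inductive step, assume $\NA(M) \ne \emptyset$ and choose a minimal prime $p$ of $\NA(M)$. Minimality of $p$ forces $M_q \in \add\A_q$ for every $q \sbne p$, so that, viewed as an $R_p$-module, $M_p$ belongs to $\DD(\add\A_p)_0^n$, the category from Section \ref{delta} applied in the local ring $R_p$ to the thick subcategory $\add\A_p$ of $\G_{C_p}$ (cf.\ the forthcoming Lemma \ref{6.2}), where $n := \add\A_p\hdim M_p$ is finite by hypothesis and positive since $p \in \NA(M)$. By the hypothesis, there exists $X^{(p)} \in \X$ with $n \le \add\A_p\hdim X^{(p)}_p$; applying Proposition \ref{replace} to $X^{(p)}$ produces $X' \in \Ares X^{(p)} \sbe \X$ with $\NA(X') = V(p)$ and $\add\A_\pi\hdim X'_\pi = \add\A_\pi\hdim X^{(p)}_\pi$ for all $\pi \in V(p)$, so that $X'_p$ likewise lies in $\DD(\add\A_p)_0$ of $R_p$ with $\add\A_p\hdim X'_p \ge n$. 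Corollary \ref{filtration'} applied in the local ring $R_p$ then yields
\[M_p \in \DD(\add\A_p)_0^n \sbe \DD(\add\A_p)_0^{\add\A_p\hdim X'_p} = \res(\{X'_p\} \cup \add\A_p) \sbe \add \X_p,\]
the last inclusion using $X' \in \X$, $\A \sbe \X$, and Lemma \ref{loc}.

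Having $M_p \in \add \X_p$, I apply Lemma \ref{sequence} with $V = \{p\}$ to obtain $X \in \X$ and $L \in \md(R)$ fitting in exact sequences
\[0 \to K \to X \to M \to 0 \qquad \text{and} \qquad 0 \to L \to M \op K \op R^t \to X \to 0,\]
with $\NA(L) \sbe \NA(M)$ and $p \notin \NA(L)$, so that $\NA(L) \sbne \NA(M)$. Routine dimension inequalities applied to these two sequences, together with the fact that $X \op X^{(q)} \in \X$ dominates $L_q$ in $\add\A_q$-dimension for each prime $q$, show that $L$ itself satisfies the hypothesis of the proposition. By the inductive hypothesis $L \in \X$, and then the second exact sequence together with closure of $\X$ under extensions gives $M \op K \op R^t \in \X$, whence $M \in \X$ by closure under summands.

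The main obstacle is the central local-filtration step: it is Corollary \ref{filtration'}, and essentially only Corollary \ref{filtration'}, that converts a pointwise dimension inequality into a concrete containment in a resolving subcategory. Arranging its hypotheses is what forces both the choice of $p$ as minimal in $\NA(M)$ (placing $M_p$ in $\DD(\add\A_p)_0$ of $R_p$) and the invocation of Proposition \ref{replace} on $X^{(p)}$ (placing $X'_p$ there as well). The subsequent reduction via Lemma \ref{sequence} and induction is essentially formal, modulo the bookkeeping dimension chase verifying that $L$ again satisfies the hypothesis.
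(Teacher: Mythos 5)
Your argument is correct and follows the paper's strategy: localize at a minimal prime $p$ of $\NA(M)$, use Proposition \ref{replace} together with Corollary \ref{filtration'} to place $M_p$ in $\add\X_p$, then invoke Lemma \ref{sequence} to replace $M$ by some $L$ with $\NA(L)\sbne\NA(M)$ and induct. The only differences are cosmetic: you run a global Noetherian induction on the closed set $\NA(M)$ rather than first passing to the local case via Lemma \ref{local} and inducting on $\dim\NA(M)$ as the paper does, and you spell out the inductive step (the dimension chase verifying that $L$ again satisfies the hypothesis), which the paper defers to the analogous argument in Dao--Takahashi.
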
  
\begin{proof}
Because of Lemma \ref{local}, we may assume $(R,\mm,k)$ is local.  We proceed by induction on $\dim\NA(M)$.  If $\dim\NA(M)=-\infty$, then $M$ is in $\A$ and we are done.  Suppose $\dim\NA(M)=0$.  Then $M$ is in $\DA^t_0$ where $t=\A\hdim X$.  By Proposition \ref{replace}, there exists a $Y\in \Ares X\sbe\X$ with $\A\hdim Y=t$ and $Y\in\DA_0$, and thus $Y\in\DA_0^t\del\DA_0^{t-1}$.  By Corollary \ref{filtration'}, $\Ares{Y}=\DA_0^t$, and thus $M\in\Ares(Y)\sbe\X$.  

The rest of the proof uses Lemma \ref{sequence} and is identical to \cite[Theorem 3.5]{DaoTakahashi13}, except one replaces the nonfree locus of $M$ by $\NA(M)$ and replaces projective dimension by $\A\hdim$.  
\end{proof}
We come to the main theorem of this section.  Recall that $\Ga$ is the set of grade consistent functions.  
\begin{theorem}\label{main2}
Assume $R$ is Noetherian.  If $\A$ is a thick subcategory of $\G_C$ which is closed under $\da$, then $\Ld_\A$ and $\Phi_\A$ are inverse functions giving a bijection between $\Ga$ and $\R(\A)$.
\end{theorem}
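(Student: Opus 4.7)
The plan is to verify the two compositions $\Ld_\A \circ \Phi_\A = \id_{\R(\A)}$ and $\Phi_\A \circ \Ld_\A = \id_\Ga$, and in particular that each map lands in the claimed target. For well-definedness, I first check that $\Phi_\A(\X) \in \Ga$ for $\X \in \R(\A)$: monotonicity is immediate from the fact that $\add\A_p\hdim M_p$ is non-increasing as $p$ becomes smaller (further localization), and the bound $\Phi_\A(\X)(p) \le \grade(p)$ comes from the Auslander--Buchsbaum formula (Proposition \ref{ABform}), which applies since $\add\A_p$ is thick in $\G_{C_p}$ and closed under $\hm_{R_p}(-,C_p)$ (checked prime-by-prime). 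Dually, for $f \in \Ga$, $\Ld_\A(f)$ is resolving by the corollary at the end of Section \ref{alpha}, contains $\A$ because $\add\A_p\hdim A_p = 0$ for $A \in \A$, and is contained in $\DD(\A)$ because $f$ is globally bounded and Lemma \ref{local} lets us patch the local finite resolutions.

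Next I would establish $\Ld_\A(\Phi_\A(\X)) = \X$ for $\X \in \R(\A)$. The inclusion $\X \sbe \Ld_\A(\Phi_\A(\X))$ is tautological from the definition of $\Phi_\A(\X)$ as a pointwise supremum. For the reverse, take $M \in \Ld_\A(\Phi_\A(\X))$; then at every $p \in \spec R$ the finite natural number $\add\A_p\hdim M_p$ is dominated by the supremum $\Phi_\A(\X)(p)$, so there exists some $X \in \X$ with $\add\A_p\hdim X_p \ge \add\A_p\hdim M_p$. The proposition immediately preceding this theorem (which encodes the filtration analysis of Section \ref{delta}) then forces $M \in \X$.

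The more delicate step is $\Phi_\A(\Ld_\A(f)) = f$ for $f \in \Ga$. The inequality $\Phi_\A(\Ld_\A(f))(p) \le f(p)$ is trivial. For the reverse, fix $p \in \spec R$ with $n = f(p)$; I need a single witness $M \in \Ld_\A(f)$ with $\add\A_p\hdim M_p \ge n$. Locally in $R_p$ (whose depth is at least $\grade(p) \ge n$), Lemma \ref{2.5} applied to the residue field $\kappa(p)$ produces $\atr\zz^{n-1}\kappa(p)$ of $\add\A_p$-dimension exactly $n$. To globalize, I would lift $\kappa(p)$ to a suitable finite-length $R/p$-module, form $\atr\zz^{n-1}$ of it, and then apply Proposition \ref{replace} to arrange $\NA(M) = V(p)$ while preserving the local $\A$-dimensions on $V(p)$. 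For $q \not\spe p$ we get $M_q \in \add\A_q$, contributing $0 \le f(q)$; for $q \spe p$, grade consistency gives $f(q) \ge f(p) = n$, and the construction together with Auslander--Buchsbaum shows $\add\A_q\hdim M_q \le n \le f(q)$, so $M \in \Ld_\A(f)$ as required.

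The main obstacle I anticipate is precisely this realization step: producing a global module whose $\add\A_p$-dimension hits $n$ at the prescribed prime while remaining within the constraint $f$ at every other prime. The argument should closely parallel \cite[Theorem 3.5]{DaoTakahashi13}, systematically substituting projective dimension by $\A$-dimension, the classical Auslander transpose by $\atr$ from Section \ref{delta'}, and the non-free locus by $\NA$. The heavy lifting is done by the filtration Theorem \ref{filtration} (and its Corollary \ref{filtration'}), Proposition \ref{replace}, and Lemma \ref{2.5}; once these are assembled the two bijectivity verifications become essentially formal.
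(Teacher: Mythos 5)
Your overall structure — verify the two compositions — is exactly right, and your treatment of $\Ld_\A\Phi_\A=\id_{\R(\A)}$ matches the paper: it is precisely the unnamed proposition preceding Theorem \ref{main2} (which rests on the filtration machinery of Section \ref{delta}).

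Where you diverge, and where you correctly anticipate trouble, is in proving $\Phi_\A\Ld_\A=\id_\Ga$. You propose to \emph{build} the witness module directly via $\atr\zz^{n-1}\kappa(p)$, globalizing the residue field and then invoking Proposition \ref{replace}. As sketched this has real gaps: $\kappa(p)$ lives over $R_p$, not $R$, and it is not explained how to produce a global module in $\DD(\A)$ whose localization at $p$ reproduces the local transpose; Proposition \ref{replace} also requires you to \emph{start} with a module already in $\DD(\A)$ and only controls the non-$\A$-locus and the dimensions on $V(p)$, while you additionally need $\add\A_q\hdim M_q\le f(q)$ off $V(p)$, which the proposition does not directly guarantee. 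The paper avoids all of this with a much cheaper observation: by \cite[Lemma~5.1]{DaoTakahashi13} there already exists $M\in\DD(\PP)\sbe\DD(\A)$ with $\pd_{R_p}M_p=f(p)$ and $\pd_{R_q}M_q\le f(q)$ for all $q$. Since $M_q$ has finite projective dimension and both $\pd$ and $\add\A_q\hdim$ satisfy the Auslander--Buchsbaum formula (Proposition~\ref{ABform} and Corollary~\ref{6.2}), the two local dimensions coincide, so this $M$ is automatically a witness for $\Ld_\A(f)$ attaining the value $f(p)$ at $p$. In other words, the missing idea is that you need not construct new witnesses through the $\atr$-machinery at all — the witnesses of finite projective dimension from the classification of $\R(\PP)$ already work verbatim, because $\A$-dimension and projective dimension agree on $\DD(\PP)$.

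One smaller point: your well-definedness check that $\Ld_\A(f)\sbe\DD(\A)$ because ``$f$ is globally bounded'' is only automatic when $\dim R<\infty$; the paper sidesteps this by not belaboring well-definedness, but if you want to include it, the cleaner reason is that $\add\A_p\hdim M_p=\depth R_p-\depth M_p\le\dim R_p$ by the Auslander--Buchsbaum formula, which gives the needed uniform bound when $\dim R<\infty$, while the general Noetherian case requires a bit more care.
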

\begin{proof}
The previous proposition shows that $\Ld_\A\Phi_\A$ is the identity on $\R(\A)$.  Let  $f\in \Ga$ and $p\in\spec R$.  Since $\add \A_p\hdim X_p\le f(p)$ for every $X\in\Ld_\A(f)$, we have $\Phi_\A(\Ld_\A(f))(p)\le f(p)$.  However, by \cite[Lemma 5.1]{DaoTakahashi13}  there is an $M\in\DD(\PP)\sbe\DD(\A)$ such that $\pd_{R_p} M_p=f(p)$ and $\pd_{R_q} M_q\le f(q)$ for all $q\in\spec R$.  Since for all $q\in\spec R$ $\pd_q M_q=\add\A_q\hdim M_q$, $M$ is in $\Ld_\A(f)$, and we have  $\Phi_\A(\Ld_\A(f))(p)= f(p)$.  Thus $\Phi_\A\Ld_\A$ is the identity on $\Ga$.  
\end{proof}

\section{Resolving Subcategories That Are closed under $\da$}\label{zeta}
We wish to expand upon Theorem \ref{main2} using the results in Section \ref{gamma}.   However, to use Theorem \ref{main2}, we need to understand which thick subcategories of $\G_C$ containing $C$ are closed under duals.    In this section, $C$ will be a semidualizing module.  Since $\G_C$ is cogenerated by $\add C$, as seen at the end of Section \ref{beta}, it stands to reason that the results of Section \ref{gamma} are applicable. 
\begin{lemma}\label{6.1}

Suppose  $\M\sbe\G_C$ is resolving with $C\in\M$.  Then $\M$ is thick in $\G_C$ if and only if for every $M\in\M$, $(\zz M^\da)^\da$ is in $\M$.  In particular, $\M$ is thick in $\G_C$ if any only if it is cogenerated by $\add C$.  
\end{lemma}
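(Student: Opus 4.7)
The plan is to exploit one canonical exact sequence. For every $M \in \G_C$, dualizing a free presentation $\zz M^\da \to R^n \to M^\da \to 0$ via $\ext^{>0}(M^\da, C) = 0$ produces
\[0 \to M \to C^n \to (\zz M^\da)^\da \to 0,\]
whose three terms all lie in $\G_C$ (using that $\G_C$ is closed under $\da$ and under syzygies). The forward implication is then immediate: if $\M$ is thick in $\G_C$ and $M \in \M$, both $M$ and $C^n$ lie in $\M$ (because $C \in \M$), while $(\zz M^\da)^\da \in \G_C$, so thickness delivers $(\zz M^\da)^\da \in \M$.

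For the converse I would take an exact sequence $0 \to L \to M' \to N \to 0$ with $L, M' \in \M$ and $N \in \G_C$, and form the pushout of the diagram $M' \leftarrow L \to C^n$, where $L \to C^n$ is the injection from the cogenerating sequence for $L$. This produces a module $T$ sitting in two short exact sequences: one is $0 \to M' \to T \to (\zz L^\da)^\da \to 0$, whose outer terms lie in $\M$ by the standing hypothesis, forcing $T \in \M$ by extension-closure; the other is $0 \to C^n \to T \to N \to 0$. Dualizing the latter via $\ext^1(N, C) = 0$ gives $0 \to N^\da \to T^\da \to R^n \to 0$, which splits because $R^n$ is projective. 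Dualizing once more and using $T, N \in \G_C$ to identify each with its double dual yields $T \cong N \op C^n$, so $N$ becomes a direct summand of $T \in \M$ and therefore lies in $\M$ by summand-closure.

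For the ``in particular'' clause I will show that the syzygy-of-dual condition and the property ``$\add C$ cogenerates $\M$'' are equivalent. The displayed sequence gives one direction at once. For the reverse, given any cogenerating sequence $0 \to M \to A \to M' \to 0$ with $A \in \add C$ and $M' \in \M \sbe \G_C$, dualizing via $\ext^1(M', C) = 0$ produces a free presentation $0 \to (M')^\da \to A^\da \to M^\da \to 0$ of $M^\da$, so $(M')^\da$ agrees with $\zz M^\da$ up to a free summand; dualizing once more and using summand-closure of $\M$ then gives $(\zz M^\da)^\da \in \M$.

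The main obstacle is the splitting step in the converse direction: the sequence $0 \to C^n \to T \to N \to 0$ need not split in $\md(R)$, and its splitting only becomes visible after dualizing into a split surjection onto the free module $R^n$. Extracting $T \cong N \op C^n$ from the split sequence of duals relies essentially on both $T$ and $N$ being totally $C$-reflexive, which is why the hypothesis $N \in \G_C$ enters with full force.
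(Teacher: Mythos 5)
Your proof is correct, and the converse direction takes a genuinely different route from the paper's. The paper exploits that $\da$ is a duality on $\G_C$: it reduces thickness of $\M$ in $\G_C$ to the statement that $\M^\da$ is a resolving subcategory, then verifies the resolving axioms for $\M^\da$ (the hypothesis on $(\zz M^\da)^\da$ supplying exactly syzygy-closure of $\M^\da$). You instead run a direct construction: given $0\to L\to M'\to N\to 0$ with $L,M'\in\M$ and $N\in\G_C$, you push out along the cogenerating injection $L\hookrightarrow C^n$ to build $T$ sitting in $0\to M'\to T\to(\zz L^\da)^\da\to 0$ (forcing $T\in\M$ by extension closure) and $0\to C^n\to T\to N\to 0$, and then you extract $T\cong N\op C^n$ by dualizing to a split surjection onto $R^n$ and applying biduality. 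Both arguments are sound; the paper's is shorter and more structural, while yours is more concrete, avoids any appeal to $\M^\da$ being a subcategory with the resolving axioms, and makes the role of the biduality hypothesis on $N$ and $T$ maximally visible. Your treatment of the ``in particular'' clause is also fine, and in fact supplies detail the paper omits; one small precision worth adding there is that $A^\da$ is projective but not necessarily free, so the identification of $(M')^\da$ with $\zz M^\da$ should be phrased as ``up to projective summands'' via Schanuel's lemma rather than ``up to a free summand'' --- this does not affect the conclusion, since the projective summand dualizes into $\add C\sbe\M$.
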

Since syzygies are unique up to projective summands, $(\zz M^\da)^\da$ is unique up to $\add C$ summands.  Thus, for our purposes, our choice of syzygy is inconsequential.  When $R=C$, $(\zz M^\da)^\da$ is the classical cosyzygy of a Gorenstein dimension zero module.  Thus in this case, the lemma is equivalent to saying that a resolving subcategory $\M$  of $\G_R$  is thick if and only if it is closed under cosyzygies.  
\begin{proof}
Assume $\M$ is thick, and let $M\in\M$. We have the following exact sequence.  
\[0\to \zz M^\da\to R^n\to M^\da\to 0\]
Applying $\da$ yields
\[0\to M\to C^n\to (\zz M^\da)^\da\to 0.\]
Since $C\in\M$, if $\M$ is thick in $\G_C$, $(\zz M^\da)^\da$ is in $\M$.  

Conversely, suppose for every $M\in\M$, $(\zz M^\da)^\da$ is in $\M$.  We wish to show that $\M$ is thick in $\G_C$.  Since $\M$ is resolving, it suffices to check that $\M^\da$ is also resolving, since $\da$ is a duality on $\G_C$.  It is also clear that $\M^\da$ is extension closed.  Since $C\in \M$, we have $R\in\M^\da$.  Therefore it suffices to check that $\M^\da$ is closed under syzygies.  Take $Z=M^\da\in\M^\da$.  Then since $(\zz M^\da)^\da$ is in $\M$,  $(\zz M^\da)^{\da\da}\cong \zz M^\da=\zz Z$ is in $\M^\da$, as desired.  

 \end{proof}
 
The following corollary, although intuitive, is not obvious, and it is not clear if it holds for other subcategories besides $\G_C$.

\begin{corollary}\label{6.2}

If $\M$ is thick in $\G_C$, then $\add \M_p$ is thick in $\G_{C_p}$ for all $p\in\spec R$.

\end{corollary}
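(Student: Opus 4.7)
The plan is to reduce the corollary to Lemma \ref{6.1} applied locally. Observe that a thick subcategory $\M$ of $\G_C$ (containing $C$) is characterized by being a resolving subcategory of $\G_C$ closed under the operation $M\mapsto (\zz M^\da)^\da$, and the task at $\p$ is to verify these same properties for $\add\M_\p\sbe\G_{C_\p}$, noting that $C_\p$ is itself a semidualizing $R_\p$-module.

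First I would dispatch the easy hypotheses. By Lemma \ref{loc}, $\add\M_\p$ is resolving, and $C_\p\in\add\M_\p$ because $C\in\M$. So by Lemma \ref{6.1} applied over $R_\p$, it remains only to show that for every $N\in\add\M_\p$, writing $N^{\da_\p}=\hm_{R_\p}(N,C_\p)$, the module $(\zz N^{\da_\p})^{\da_\p}$ lies in $\add\M_\p$.

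The main step is this closure property, and the tool is that both $\hm(-,C)$ and syzygies commute with localization up to free/$\add C$ summands for finitely generated modules over the Noetherian ring $R$. Given $N\in\add\M_\p$, write $N$ as a summand of $M_\p$ for some $M\in\M$, say $M_\p=N\op N'$. Because $\M$ is thick in $\G_C$, Lemma \ref{6.1} gives $(\zz M^\da)^\da\in\M$, hence
\[
\big((\zz M^\da)^\da\big)_\p\in\M_\p\sbe\add\M_\p.
\]
Using the localization identities $M^\da_\p\cong M_\p^{\da_\p}=N^{\da_\p}\op{N'}^{\da_\p}$ and the fact that syzygies commute with localization modulo projective summands, one finds that $(\zz M^\da)^\da_\p$ agrees, up to $\add C_\p$-summands, with
\[
(\zz N^{\da_\p})^{\da_\p}\op (\zz {N'}^{\da_\p})^{\da_\p}.
\]
Since $C_\p\in\add\M_\p$ and $\add\M_\p$ is closed under direct summands, we conclude $(\zz N^{\da_\p})^{\da_\p}\in\add\M_\p$, which is exactly what Lemma \ref{6.1} demands.

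The only mildly delicate point is tracking the discrepancy between $\zz(M^\da)_\p$ and $\zz(M^\da_\p)$, and between $(\zz M^\da)^\da_\p$ and $((\zz M^\da)_\p)^{\da_\p}$: both match on the nose up to summands in $\add R_\p$ respectively $\add C_\p$, and since both $R_\p$ and $C_\p$ are already in $\add\M_\p$, these discrepancies are absorbed. This is where the hypothesis $C\in\M$ is essential; otherwise the identification of the "cosyzygy" up to $\add C_\p$-summands would not remain inside $\add\M_\p$.
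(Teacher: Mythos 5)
Your proof is correct and follows essentially the same route as the paper's: apply Lemma \ref{loc} to get $\add\M_\p$ resolving, reduce via Lemma \ref{6.1} to closure under $N\mapsto(\zz N^{\da_\p})^{\da_\p}$, realize $N$ as a summand of $L_\p$ for $L\in\M$, and push $(\zz L^\da)^\da\in\M$ through localization. The paper writes the chain of localization identities as equalities (relying on the remark after Lemma \ref{6.1} that the cosyzygy is only defined up to $\add C$-summands), whereas you make the ``up to summands'' bookkeeping explicit; this is a presentational refinement rather than a different argument.
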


\begin{proof}

Take $p\in\spec R$.  From Lemma $\ref{loc}$, we know that $\add\M_p$ is resolving.  By the previous lemma, it suffices to show that for all $M\in\add\M_p$, $(\zz_{R_p} M^\da)^\da=\hm(\zz_{R_p} \hm(M,C_p),C_p)$ is in $\add\M_p$.  For every $M\in\add\M_p$, there exists a $N$ such that $M\op N=L_p$ for some $L\in \M$.  Consider the following.
\begin{align*}
{(\zz L^\da)^\da}_p=& \hm(\zz_R \hm(L,C),C)_p= \hm(\zz_{R_p} \hm(L_p,C_p),C_p)\\
&\quad\quad\quad=\hm(\zz_{R_p} \hm(M\op N,C_p),C_p)=\hm(\zz_{R_p} \hm(M,C_p),C_p)\op\hm(\zz_{R_p} \hm(N,C_p),C_p)
\end{align*}
By the previous lemma, $(\zz L^\da)^\da$ is in $\M$, and so $(\zz_{R_p} M^\da)^\da$ is in $\add\M_p$.  

\end{proof}

\begin{prop}\label{6.3}

Let $\A$ be the smallest thick subcategory of $\G_C$ containing $C$.  Then $\A$ is closed under $\da$.  

\end{prop}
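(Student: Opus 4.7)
The plan is to introduce the subcategory $\A'=\{M\in\A\mid M^\da\in\A\}$ and to show $\A'=\A$ by verifying that $\A'$ itself is a thick subcategory of $\G_C$ containing $C$; the minimality of $\A$ then forces the equality, which is precisely the statement that $\A$ is closed under $\da$. Clearly $C\in\A'$ since $C^\da\cong R\in\A$, and $R\in\A'$ since $R^\da\cong C\in\A$.

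To see that $\A'$ is resolving I will exploit the fact that $\da$ is a duality on $\G_C$, together with the Ext-vanishing built into totally $C$-reflexive modules. Closure under summands is automatic from $\A'\sbe \A$ and the corresponding property of $\A$. For extensions, given $0\to L\to M\to N\to 0$ with $L,N\in\A'$, thickness of $\G_C$ ensures $M\in\G_C$, and applying $\da$ (using $\ext^1(N,C)=0$) produces an exact sequence $0\to N^\da\to M^\da\to L^\da\to 0$ whose outer terms lie in $\A$; since $\A$ is resolving, $M^\da\in\A$, so $M\in\A'$. For syzygies, from $0\to\zz M\to F\to M\to 0$ with $F$ free and $M\in\A'$, dualizing gives $0\to M^\da\to F^\da\to(\zz M)^\da\to 0$ with $F^\da\in\add C\sbe\A$ and $M^\da\in\A$; this is a short exact sequence in $\G_C$, so by thickness of $\A$ in $\G_C$ we conclude $(\zz M)^\da\in\A$, i.e.\ $\zz M\in\A'$.

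Finally, to check $\A'$ is thick in $\G_C$, suppose $0\to L\to M\to N\to 0$ is exact with $L,M\in\A'$ and $N\in\G_C$. Thickness of $\A$ in $\G_C$ yields $N\in\A$; dualizing gives $0\to N^\da\to M^\da\to L^\da\to 0$ with $M^\da,L^\da\in\A$ and $N^\da\in\G_C$ (since $\G_C$ is self-dual under $\da$), so applying thickness of $\A$ in $\G_C$ once more gives $N^\da\in\A$, whence $N\in\A'$. Thus $\A'$ is a thick subcategory of $\G_C$ containing $C$, and the minimality of $\A$ gives $\A'=\A$ as desired.

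The only non-formal step is the closure of $\A'$ under syzygies and the thickness condition: each of these requires showing that a certain dualized term lies in $\G_C$ so that the thickness of $\A$ \emph{inside $\G_C$} can be invoked. This is where I expect the only subtlety to live, and it is handled cleanly by the fact that $\G_C$ is resolving and that $\da$ restricts to a duality on $\G_C$, so no new input beyond Lemma~\ref{6.1} and the Ext-vanishing of $\G_C$ is required.
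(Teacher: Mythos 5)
Your proposal is correct, and it takes a genuinely different and more streamlined route than the paper. The paper's argument constructs the explicit set $\W$ of modules obtained by iterating $\zz$ and $\da$ on $R$, proves $\res\W=\A$, and then carries out a double induction on the number of steps needed to build a module in a resolving closure. Your argument sidesteps all of that by defining $\A'=\{M\in\A\mid M^\da\in\A\}$ and verifying directly that $\A'$ is a thick subcategory of $\G_C$ containing $C$, so that minimality of $\A$ forces $\A'=\A$. This is cleaner: it requires only the duality of $\da$ on $\G_C$ and the Ext-vanishing $\ext^1(-,C)=0$ on $\G_C$, with no appeal to the machinery of counting construction steps in a resolving closure, and no need to identify $\A$ with $\res\W$. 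What the paper's longer proof buys is a concrete generating set $\W$ for $\A$, which is of some independent descriptive interest, but that is not needed for the statement at hand.

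One small mislabeling worth fixing: in your final paragraph, to extract $N^\da\in\A$ from $0\to N^\da\to M^\da\to L^\da\to 0$ with $M^\da,L^\da\in\A$, you are using the defining resolving property of $\A$ (kernel of a surjection with cokernel in $\A$, middle term in $\A$), not thickness of $\A$ in $\G_C$; thickness is used earlier to get $N\in\A$. The same clarification applies to the syzygy step, where you correctly invoke thickness to land $(\zz M)^\da$ in $\A$. The argument is sound in both places; only the attribution of the property used was slightly off.
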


Since the intersection of thick subcategories of $\G_C$ is thick, it is clear that $\A$ exists.  

\begin{proof}

First, let $\W$ be the set of of modules obtained by applying $\da$ and $\zz$ to $R$ successive times.  Suppose for a moment that $\res \W=\A$.  Let $A\in \A$.  We will show that $A^\da\in \A$ by inducting on the number of steps needed to construct $A$ from $\W$.  See \cite{Takahashi09} for a precise definition of the notion of steps with regards to a resolving subcategory.  If $A$ takes 0 steps to construct, then $A$ is either $R$ or in $\W$, and the claim is clear. Suppose $A$ is constructed in $n>0$ steps.  Then there exists $B_1$ and $B_0$ which can be constructed in $n-1$ steps and satisfy one of the following situations.
\begin{enumerate}
\item $0\to A\to B_0\to B_1\to 0$ 
\item $0\to B_0\to A\to B_1\to 0$
\item $B_0=A\op B_1$
\end{enumerate}
Therefore one of the following is true.
\begin{enumerate}
\item[(a)] $0\to B_1^\da\to B_0^\da\to A^\da\to 0$
\item[(b)] $0\to B_1^\da\to A^\da\to B_0^\da\to 0$
\item[(c)] $B_0^\da=A^\da\op B_1^\da$
\end{enumerate}
By induction, $B_0^\da$ and $B_1^\da$ are in $\A$.  Since $\A$ is thick, each of these situatations implies that $A^\da$ is in $\A$.

Therefore, it suffices to show that $\res \W=\A$.  First, we show that $\res \W$ is a thick subcategory containing $C$.  In light of Lemma \ref{6.1}, it suffices to show that for every $A\in \res \W$, we have $(\zz A^\da)^\da\in\res\W$.  We work as we did in the previous paragraph, and we proceed by induction on the number of steps needed to construct $A$ from $\W$.  When it takes 0 steps, then $A$ is either $R$ or in $\W$, in which case the claim is clear.  Suppose $A$ needs $n>0$ steps to be constructed.  Working as we did in the previous paragraph, there exists modules $B_1$ and $B_0$ which can be constructed in $n-1$ steps and  satisfy one of (1), (2) or (3) above. Therefore, one of the following is true.
\begin{enumerate}
\item[(a)] $0\to (\zz A^\da)^\da\to (\zz B_0^\da)^\da\to(\zz B_1^\da)^\da\to 0$
\item[(b)] $0\to (\zz B_0^\da)^\da\to (\zz A^\da)^\da\to(\zz B_1^\da)^\da\to 0$
\item[(c)] $(\zz B_1^\da)^\da=(\zz A^\da)^\da\op(\zz B_0^\da)^\da$
\end{enumerate}
By induction $(\zz B_0^\da)^\da$ and $(\zz B_1^\da)^\da$ are in $\res\W$.  Since $\W$ is resolving, then so is $(\zz A^\da)^\da$ as desired.  

It suffices now to show that $\res\W\sbe \A$.  To do this, we show that each $W\in \W$ is  in $\A$.   We induct on $c(W)$, the smallest number of times it takes to apply $\zz$ and $\da$ to $R$ to obtain $W$.  If $c(W)=0$, then $W=R$, and we are done.  If $c(W)=1$, then $W$ is either $0$ or $C$ which are both in $\A$.  Therefore, we may assume that $c(W)>1$.  Then one of the following situations must occur.
\begin{enumerate}
\item $A=\zz^2 B$
\item $A=B^{\da\da}$
\item $A=\zz (B^\da)$
\item $A=(\zz B)^\da$
\end{enumerate}
where $c(B)=c(A)-2$.  By induction, $B$ is in $\A$.  In cases (1) and (2), it is clear that $A$ is in $\A$ too.  We have $c(B^\da)\le c(B)+1<c(A)$, and so $B^\da$ is in $\A$ by induction.  Now in case (3), the result  is clear.  So we assume that we are in case (4).  By Lemma \ref{6.1}, $(\zz( B^{\da\da}))^\da\cong (\zz B)^\da=A$ must be in $\A$.

\end{proof}

For the rest of this section, $\A$ will continue to be the smallest thick subcategory of $\G_C$ containing $C$.  It is immediate that $\A$ satisfies the assumptions of Theorem \ref{main2}.  We wish to apply the results from the  beginning of the paper.  Using the notation of Section \ref{gamma}, set $\Ss(C)=\Ss(\A)$, i$.$e$.$ let $\Ss(C)$ be the collection resolving subcategories $\M\sbe\md(R)$ such that $\A$ cogenerates $\M$ and $\add \A_p$ is thick in $\add \M_p$ for every $p\in \spec R$.

\begin{lemma}\label{6.4}

Every thick subcategory of $\G_C$ which contains $C$ is in $\Ss(C)$.  Furthermore, when $R$ is Cohen-Macaulay, every element in $\Ss(C)$ is contained in $\MCM$.  In particular,  when $C=D$ is a dualizing module, $\Ss(D)$ is the collection of thick subcategories of $\MCM$ containing $D$.  

\end{lemma}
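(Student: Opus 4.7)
The plan is to address the three assertions of the lemma separately, with most of the work falling on the last one.

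For the first assertion, I would take any thick subcategory $\M$ of $\G_C$ containing $C$ and verify the two conditions defining $\Ss(\A)$. The containment $\A \sbe \M$ is immediate from the minimality of $\A$. Cogeneration of $\M$ by $\A$ follows from the exact sequence $0 \to M \to C^n \to (\zz M^\da)^\da \to 0$ constructed at the end of Section \ref{beta}: the rightmost term lies in $\M$ by Lemma \ref{6.1}, and $C^n \in \add C \sbe \A$. For the local thickness condition, Corollary \ref{6.2} places both $\add \A_p$ and $\add \M_p$ inside $\G_{C_p}$ as thick subcategories; thickness of $\add \A_p$ in the intermediate resolving subcategory $\add \M_p$ is then a formal consequence.

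For the second assertion, fix $\M \in \Ss(C)$ with $R$ Cohen-Macaulay and $M \in \M$. I would first note that by Proposition \ref{ABform} combined with Lemma \ref{dimension}, one has $\A \sbe \G_C \sbe \MCM$. Then iterate the cogeneration hypothesis to obtain exact sequences $0 \to M_i \to A_i \to M_{i+1} \to 0$ for $0 \le i < d := \dim R$, with $M_0 = M$, $A_i \in \A \sbe \MCM$ and $M_{i+1} \in \M$. Localizing at any maximal $\mm$ and applying the depth lemma inductively backwards from $M_d$ (of depth at least $0$) yields $\depth_{R_\mm} M_\mm \ge \dim R_\mm$, so $M_\mm$ is MCM; since this holds for every $\mm$, $M \in \MCM$.

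For the third assertion, when $C = D$ is dualizing, one uses $\G_D = \MCM$ and the first assertion to obtain one inclusion. For the reverse inclusion, take $\M \in \Ss(D)$: the second assertion gives $\M \sbe \MCM$, and $D \in \A \sbe \M$ is automatic, so the remaining task is thickness of $\M$ in $\MCM$. I would first establish that $\A$ itself is thick in $\MCM$ globally by localizing at each prime $p$ and invoking Corollary \ref{6.2} together with Lemma \ref{local}. Then, given a test sequence $0 \to L \to N \to K \to 0$ with $L, N \in \M$ and $K \in \MCM$, we have $\M\hdim K \le 1$, so Theorem \ref{hull} applied to the pair $\A \sbe \M$ furnishes an exact sequence $0 \to K \to A \to M' \to 0$ with $A \in \DD(\A)$ of $\A$-dimension at most $1$ and $M' \in \M$. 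As an extension of two MCM modules, $A$ is itself MCM, and thickness of $\A$ in $\MCM$ together with Lemma \ref{dimension} forces $\A\hdim A = 0$, placing $A$ in $\A \sbe \M$. Closure of $\M$ under kernels of surjections applied to $0 \to K \to A \to M' \to 0$ then yields $K \in \M$.

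The main obstacle is the last thickness verification: the Auslander--Buchweitz step is essential because cogeneration alone only provides coresolutions, whereas thickness requires extracting a module from the middle of a short exact sequence in $\MCM$. The other two parts are largely definition-chasing and repeated invocation of the depth lemma.
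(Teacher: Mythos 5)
Your proof is correct, and for the first two assertions it follows essentially the paper's own path: minimality of $\A$ gives $\A\sbe\M$, the exact sequence $0\to M\to C^n\to(\zz M^\da)^\da\to 0$ together with Lemma \ref{6.1} gives cogeneration, Corollary \ref{6.2} handles local thickness, and iterated cogeneration plus the depth lemma gives containment in $\MCM$. Your observation that thickness of $\add\A_p$ in the intermediate resolving subcategory $\add\M_p$ is a formal consequence of thickness in $\G_{C_p}$ is a slight (valid) simplification of the paper's detour through the Auslander--Buchsbaum formula and Lemma \ref{dimension}.

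The genuine divergence is in the third assertion. The paper simply declares it ``clear'' from $\G_D=\MCM$, but you correctly notice that one still has to show an arbitrary $\M\in\Ss(D)$ is \emph{thick} in $\MCM$, not merely contained in it. Your argument through Theorem \ref{hull} is sound: for $K\in\MCM$ with $\M\hdim K\le 1$, the Auslander--Buchweitz hull $A$ of $K$ is an extension of MCM modules, hence MCM, hence of $\A$-dimension zero by Proposition \ref{ABform} and Lemma \ref{dimension}, so $A\in\A\sbe\M$, and $K$ is recovered as a kernel inside $\M$. There is, however, a lighter route that exploits Lemma \ref{6.1} more fully and is almost certainly what the paper has in mind: cogeneration is transitive under pushout (from $0\to M\to A\to M'\to 0$ and $0\to A\to D^n\to A'\to 0$ one pushes out to get $0\to M\to D^n\to Q\to 0$ with $Q$ an extension of $M'$ by $A'$, hence in $\M$), so $\M$ cogenerated by $\A$ and $\A$ cogenerated by $\add D$ (Lemma \ref{6.1}, since $\A$ is thick in $\G_D$) yields $\M$ cogenerated by $\add D$, and the ``in particular'' clause of Lemma \ref{6.1} then gives thickness of $\M$ in $\G_D=\MCM$ directly, with no appeal to Theorem \ref{hull}. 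Both arguments buy the same conclusion; yours effectively reproves a piece of Auslander--Buchweitz theory in a special case, while the transitivity argument reuses the cosyzygy characterization already established. One small redundancy in your write-up: there is no need to ``establish that $\A$ is thick in $\MCM$ globally by localizing,'' since once $C=D$ one has $\G_D=\MCM$ and $\A$ is thick in $\G_D$ by its very definition.
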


\begin{proof}

Let $\M$ be a thick subcategory $\G_C$ containing $C$.  It is clear from the definition of $\A$ that $\M$ contains $\A$. By Lemma \ref{6.1}, $\M$ is cogenerated by $\add C$ and thus also by $\A$. By Corollary \ref{6.2}, $\add \M_p$ and $\add \A_p$ are thick in $\G_{C_p}$ for all primes $p\in\spec R$.  Therefore, dimension with respect to each of these subcategories satisfies the Auslander Buchsbaum formula.  It follows from Lemma \ref{dimension} that $\add \A_p$  is thick in $\add \M_p$ 

Now suppose that $R$ is Cohen-Macaulay and  $\X\in\Ss(C)$.  Since $\A$ cogenerates $\X$, for any $X\in\X$ there exists $0\to X\to A_0\to \cdots\to A_d\to X'\to 0$ with each $A_i\in\A$ and $d=\depth R$.  Since $\A\sbe \MCM$, $X$ is in $\MCM$.  The last statement is now clear, since in that case $\G_D=\MCM$.  

\end{proof}

We now come to the main results of the paper.

\begin{theorem}\label{main3}

Let $\A$ denote the smallest thick subcategory of $\G_C$ containing $C$.  For any  $\M\in\Ss(C)$ (e$.$g$.$ $\M$ is a thick subcategory of $\G_C$ containing $C$), $\Ld_\M$ and $\Phi_\M$ give a bijection between $\R(\M)$ and $\Ga$. 

Furthermore, the following is a bijection.
 \[\Ld:\Ss(C)\x\Ga\to \bigcup_{\M\in\Ss(C)} \R(\M)\sbe \R\] 
For any $\M,\N\in\Ss(C)$ with $\M\sbe \N$, then the following diagram commutes.
\[\xymatrix{
\R(\N) \ar[dr]^{\Phi_\N} 							& 		&\\
\R(\M) \ar[r]^{\Phi_\M} \ar[u]^{\eta_\M^\N}		& \Ga &\\
\R(\A) \ar[ur]_{\Phi_\A} \ar[u]^{\eta_\A^\M}		& 		&\\
}\]
In particular,  $\rho_\M^\N$ and  $\eta_\M^\N$ are inverse functions. 
\end{theorem}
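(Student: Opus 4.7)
The plan is to assemble the theorem directly from the machinery already developed, with $\A$ playing the role of the ``base'' resolving subcategory and $\Ga$ playing the role of $\Psi$. Specifically, let $\A$ denote the smallest thick subcategory of $\G_C$ containing $C$, as in Proposition \ref{6.3}. That proposition shows $\A$ is closed under $\da$, so Theorem \ref{main2} applies and gives a bijection
\[
\xymatrix{
\R(\A) \ar@<.5ex>[r]^{\Ld_\A} & \Ga \ar@<.5ex>[l]^{\Phi_\A}
}
\]
with $\Phi_\A$ and $\Ld_\A$ mutually inverse. This provides the base case needed to invoke Theorem \ref{main}.

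Next, I would verify that $\Ss(C) = \Ss(\A)$, which is exactly the content of the definition of $\Ss(C)$ at the end of Section \ref{zeta}. Thus Theorem \ref{main} applied with this choice of $\A$ and $\Psi = \Ga$ yields, for every $\M \in \Ss(C)$, the bijection $\Ld_\M : \Ga \to \R(\M)$ with inverse $\Phi_\M$, together with the commuting triangle on the bottom and the statement that $\eta_\A^\M$ and $\rho_\A^\M$ are inverse functions. It also yields the global bijection
\[
\Ld : \Ss(C)\times\Ga \to \bigcup_{\M\in\Ss(C)} \R(\M) \subseteq \R.
\]
For the full three-level diagram with $\M \subseteq \N$ in $\Ss(C)$, commutativity follows by pasting the two triangles coming from Theorem \ref{engine} (once for $\A \subseteq \M$ and once for $\A \subseteq \N$), and the identity $\eta_\A^\N = \eta_\M^\N \circ \eta_\A^\M$ noted in Section \ref{gamma} then forces $\eta_\M^\N$ and $\rho_\M^\N$ to be mutually inverse.

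The parenthetical remark ``e.g. $\M$ is a thick subcategory of $\G_C$ containing $C$'' is discharged by Lemma \ref{6.4}, which shows every such $\M$ already lies in $\Ss(C)$; so the previous paragraph covers that case automatically. I do not anticipate any essential obstacle here beyond careful bookkeeping, since all of the substantive content (the base bijection, the cogeneration hypothesis, the local thickness of $\add\A_p$ inside $\add\M_p$, and the machinery for patching across $\Ss(C)$) has already been established in Theorem \ref{main2}, Proposition \ref{6.3}, Corollary \ref{6.2}, Lemma \ref{6.4}, and Theorems \ref{engine} and \ref{main}. The only mild subtlety to check is that the inclusion $\bigcup_{\M \in \Ss(C)} \R(\M) \subseteq \R$ is valid, which is immediate because each $\R(\M)$ consists of resolving subcategories by definition.
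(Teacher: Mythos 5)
Your proposal matches the paper's proof in both structure and substance: you invoke Proposition \ref{6.3} to verify that $\A$ is closed under $\da$, apply Theorem \ref{main2} to get the base bijection $\Ld_\A\colon\Ga\to\R(\A)$, use Lemma \ref{6.4} to cover the thick-subcategory case, and then let Theorems \ref{engine} and \ref{main} do the rest. The paper's own proof is exactly this chain of citations, written even more tersely, so the two arguments are the same.
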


\begin{proof}

Proposition \ref{6.3} states that $\A$ is a thick subcategory of $\G_C$ which contains $C$ and is closed under $\da$.  Therefore, by Theorem \ref{main2}, $\Ld_\A$ and $\Phi_\A$ give a bijection between $\R(\A)$ and $\Ga$.  The first statement is an application of Theorem \ref{engine} and Lemma \ref{6.4}.  The rest follows from Theorem \ref{main}.


\end{proof}
A resolving subcategory $\X$ is dominant if for every $p\in\spec R$, there is an $n\in\NN$ such that $\zz_{R_p}^n R_p/pR_p\in \add \X_p$.     
\begin{corollary}
Suppose $R$ is Cohen-Macaulay and has a dualizing module.  Then there is a bijection between resolving subcategories containing $\MCM$ and grade consistent functions.  Furthermore, the  following are equivalent for a resolving subcategory $\X$. 
\begin{enumerate}
\item\label{51} $\X$ is dominant 
\item\label{52} $\MCM\sbe \X$
\item\label{53} $\DD(\X)=\md(R)$
\end{enumerate}
\end{corollary}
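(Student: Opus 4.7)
The plan is to establish the bijection via Theorem \ref{main3}, then prove the equivalences in the cycle $(2)\Rightarrow(3)\Rightarrow(1)\Rightarrow(2)$, with the main technical work concentrated in $(1)\Rightarrow(2)$. Since $R$ is Cohen--Macaulay with dualizing module $D$, we have $\G_D=\MCM$ and $D\in\G_D$, so $\MCM$ is itself a thick subcategory of $\G_D$ containing $D$; Lemma \ref{6.4} gives $\MCM\in\Ss(D)$, and Theorem \ref{main3} supplies the bijection between $\R(\MCM)$ and $\Ga$. Because $R$ is Cohen--Macaulay, every module has finite $\MCM$-dimension (bounded by $\dim R$), so $\DD(\MCM)=\md(R)$, and $\R(\MCM)$ is therefore exactly the collection of resolving subcategories containing $\MCM$.

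The implication $(2)\Rightarrow(3)$ is immediate from $\DD(\MCM)=\md(R)$, since $\MCM\sbe\X$ forces $\md(R)=\DD(\MCM)\sbe\DD(\X)$. For $(3)\Rightarrow(1)$, each $R/p$ lies in $\DD(\X)=\md(R)$, so Corollary \ref{1.1} yields some $n$ with $\zz^nR/p\in\X$; localization, which commutes with taking syzygies up to projective summands, then gives $\zz^n_{R_p}(R_p/pR_p)\in\add\X_p$.

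The heart of the proof is $(1)\Rightarrow(2)$. Given $M\in\MCM$, Lemma \ref{local} reduces the task to showing $M_\mm\in\add\X_\mm$ for every maximal $\mm$; since $M_\mm\in\MCM(R_\mm)$, this in turn reduces to the local assertion that every MCM $R_\mm$-module lies in $\add\X_\mm$. Dominance furnishes $n\ge d_\mm=\dim R_\mm$ with $\zz^n_{R_\mm}(R_\mm/\mm R_\mm)\in\add\X_\mm$, and for such $n$ this syzygy is itself maximal Cohen--Macaulay over $R_\mm$. The key technical claim, which is the principal obstacle of the proof, is that over a local Cohen--Macaulay ring with dualizing module, any resolving subcategory of $\md(R_\mm)$ containing an MCM syzygy of the residue field must contain all of $\MCM(R_\mm)$. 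My plan for this is to work over $R_\mm$ with $\A=\MCM(R_\mm)$: combining the explicit $\A$-resolution of $\atr\zz^{n-1}(R_\mm/\mm R_\mm)$ supplied by Lemma \ref{2.5} with the identification $\Ares X=\DA_0^j$ from Corollary \ref{filtration'}, one should be able to bootstrap from $\zz^n_{R_\mm}(R_\mm/\mm R_\mm)\in\add\X_\mm$ up to the full containment $\MCM(R_\mm)\sbe\add\X_\mm$. Once this local statement is established, Lemma \ref{local} finishes the proof globally.
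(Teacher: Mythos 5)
Your handling of the bijection and of the implications $(2)\Rightarrow(3)$ and $(3)\Rightarrow(1)$ matches the paper. The problem is $(1)\Rightarrow(2)$, which is where the paper does something essentially different and where your proposal has a genuine gap.

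After reducing to the local case, your plan is to show that a resolving subcategory $\Y$ of $\md(R_\mm)$ containing an MCM syzygy $\zz^n k$ of the residue field must contain all of $\MCM(R_\mm)$, and you intend to get this by combining Lemma \ref{2.5} with Corollary \ref{filtration'}. But both of those results are statements about $\Ares X=\res(\{X\}\cup\A)$, i.e.\ about resolving subcategories that are already assumed to contain $\A=\MCM(R_\mm)$. The explicit $\A$-resolution from Lemma \ref{2.5} has terms $G_i=F_{n+1-i}^\da\in\add D$, and Corollary \ref{filtration'} only identifies the smallest resolving subcategory containing both $X$ and $\A$. None of this lets you conclude anything about $\res(\zz^n k)$ without $\A$ thrown in. Using these tools to deduce $\MCM(R_\mm)\sbe\add\X_\mm$ from $\zz^n k\in\add\X_\mm$ is therefore circular: you would be assuming the conclusion. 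More broadly, everything in Sections \ref{delta} and \ref{epsilon} classifies resolving subcategories \emph{containing} $\A$; nothing there says that a dominant $\X$ must contain $\A$ in the first place.

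The paper sidesteps this by quoting the external bijection $\xi$ from \cite[Theorem 1.3]{DaoTakahashi13} between $\Ga$ and dominant resolving subcategories. Since $\xi(f)=\{X\mid\depth X_\p\ge\h\p-f(\p)\}$ is visibly monotone in $f$ and $\xi(0)=\MCM$, every dominant $\X$ contains $\MCM$; combined with $\xi=\Ld_{\MCM}$ this gives $(1)\Leftrightarrow(2)$ in one stroke. The content ``$\MCM$ is the smallest dominant resolving subcategory'' is a real theorem (the hard direction of \cite[Theorem 1.3]{DaoTakahashi13}) and is not a consequence of the machinery developed in this paper. To repair your proof you would either need to invoke that external result, as the paper does, or supply an independent argument that $\zz^d k$ generates $\MCM$ as a resolving subcategory over a local CM ring with dualizing module; the latter is not established by Lemma \ref{2.5} or Corollary \ref{filtration'}.
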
  
\begin{proof}
Letting $D$ be the dualizing module of $R$, $\MCM$ is the same as $\G_D$.  Hence, by the previous theorem, $\Ld_{\MCM}:\Ga\to\R(\MCM)$ is a bijection, showing the first statement. From \cite[Theorem 1.3]{DaoTakahashi13}, the following is a bijection. 
\[\xi:\Gamma\to\{\mbox{Dominant Resolving subcategories of } \md(R)\}\] 
\[\xi(f)=\{X\in\md(R)\mid \depth X\p \ge \h\p-f(\p)\}\]
It is clear that  $\xi(0)=\MCM$, hence every dominant subcategory contains \MCM.  Furthermore, we have $\mod(R)= \DD(\MCM)$, and hence every dominant resolving subcategory is an element of $\R(\MCM)$.   Then for any $f\in\Gamma$,  we have
\[\xi(f)=\{X\in\md(R)\mid \depth X_\p\ge \h\p-f(\p)\}=\{X\in\md(R)\mid \add\MCM_\p\hdim X_\p\le f(\p)\}=\Ld_{\MCM}(f).\]
Thus $\xi$ equals $\Ld_{\MCM}$, showing the equivalence of (\ref{51}) and (\ref{52}).

It is clear that (\ref{52}) implies (\ref{53}).  Assume (\ref{53}).  Take a $p\in\spec R$.  Then we have $\X\hdim R/p<\infty$.  This implies that $\zz^n R/p\in \X$ for some $n$.  Hence $\zz^n_{R_p}R_p/pR_p\in\add\X_p$, and so $\X$ is dominant.  
\end{proof}

\section{Gorenstein Rings and Vanishing of Ext}\label{eta}
In this section, $(R,\mm,k)$ is a local Gorenstein ring.  In this case, $\MCM$ is the same as $\G_R$, and  Lemma \ref{6.4} implies that $\Ss(R)$ is merely the collection of thick subcategories of $\MCM$.  This gives us the following which recovers  \cite[Theorem 7.4]{DaoTakahashi13}.
\begin{theorem}\label{maingor}
If $R$ is Gorenstein, then we have the following commutative diagram of bijections
\[\xymatrix{
\{\mbox{Thick subcategories of }\MCM\}\x \Ga \ar[dr]^{\Ld} \ar[dd]^{\Ld_\PP}& &\\
& \{\Z\in \R\mid \Z\cap \MCM \mbox{ is thick in } \MCM\} &\\
\{\mbox{Thick subcategories of }\MCM\}\x \R(\PP) \ar[ur]^{\Xi} & &\\
}\]
where $\Xi(\M,\X)=\res(\M\cup\X)$.
\end{theorem}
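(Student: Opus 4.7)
The plan is to build Theorem~\ref{maingor} from Theorem~\ref{main3}, the Dao--Takahashi bijection $\Ld_\PP:\Ga\to\R(\PP)$, and Proposition~\ref{3.1}. Since $R$ is Gorenstein we have $\MCM = \G_R$, so Lemma~\ref{6.4} identifies $\Ss(R)$ with the collection of thick subcategories of $\MCM$. Theorem~\ref{main3} then says $\Ld$ is a bijection from $\{\text{thick subcategories of }\MCM\}\x\Ga$ onto $\bigcup_{\M\in\Ss(R)}\R(\M)$, and the left vertical arrow $\id\x\Ld_\PP$ is a bijection by Dao--Takahashi. So two tasks remain: identifying this image with $\{\Z\in\R\mid \Z\cap\MCM \text{ is thick in }\MCM\}$, and verifying that the triangle commutes. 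Commutativity together with the two bijections above then automatically forces $\Xi$ to be a bijection as well.

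For the image identification, the forward inclusion rests on the claim that if $\M$ is thick in $\MCM$ and $\Z\in\R(\M)$, then $\Z\cap\MCM = \M$. The inclusion $\M\sbe \Z\cap\MCM$ is immediate, and for the reverse, any $X\in \Z\cap\MCM$ lies in $\DD(\M)$, so Lemma~\ref{dimension} (applied to the thick pair $\M\sbe\MCM$) together with the Auslander--Buchsbaum formula gives $\M\hdim X = \depth R - \depth X = 0$. For the reverse inclusion, given $\Z\in\R$ with $\M := \Z\cap\MCM$ thick in $\MCM$, I must show $\Z\sbe\DD(\M)$; this is where the Gorenstein hypothesis is essential: for any $X\in\Z$, the syzygy $\zz^d X$ ($d = \dim R$) is MCM, and $\Z$ being resolving gives $\zz^d X\in\Z$, so $\zz^d X\in\M$ and hence $\M\hdim X\le d$.

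Commutativity amounts to the equation $\Xi(\M, \Ld_\PP(f)) = \Ld_\M(f)$, which is the content of Proposition~\ref{3.1} provided its hypotheses hold for every thick subcategory $\M$ of $\MCM$: namely $\M\in\Ss(\PP)$ and $\PP$ thick in $\M$. Cogeneration of $\M$ by $\PP$ comes from the construction at the end of Section~\ref{beta} (showing $\G_R$ is cogenerated by $\add R = \PP$) combined with Lemma~\ref{6.1}, which guarantees the cogenerating cosyzygy $(\zz X^\da)^\da$ stays inside $\M$. Local thickness of $\add\PP_p$ in $\add\M_p$ and global thickness of $\PP$ in $\M$ both reduce, via Lemma~\ref{dimension}, to the standard fact that a module of finite projective dimension which is MCM must be projective; the local version uses that MCM modules localize to MCM modules over a Gorenstein ring.

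The main obstacle I anticipate is the reverse direction of the image identification, where $\M = \Z\cap\MCM$ has to simultaneously serve as the definition of $\M$ and guarantee $\Z\sbe\DD(\M)$. This is the step that makes decisive use of the Gorenstein hypothesis via $\zz^d X\in\MCM$ for every $X$, and without it the classification would not be this clean.
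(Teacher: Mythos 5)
Your argument is correct and follows the same route as the paper: identify $\Ss(R)$ with the thick subcategories of $\MCM$ via Lemma~\ref{6.4}, pin down the image of $\Ld$ as $\{\Z\in\R\mid \Z\cap\MCM\mbox{ thick in }\MCM\}$ using that $\DD(\M)\cap\MCM=\M$ in one direction and $\zz^d X\in\MCM$ in the other, and then invoke Proposition~\ref{3.1} together with Theorem~\ref{main3}. You simply spell out the verifications (the image identification in both directions, and the hypotheses of Proposition~\ref{3.1}) that the paper's proof treats as immediate.
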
 
\begin{proof}
Let $\mathfrak{Z}$ be the collection of resolving subcategories whose intersection with $\MCM$ is thick in $\MCM$.  As observed before the Theorem, $\Ss(R)$ is simply the thick subcategories of $\MCM$.  Since for any $\M\in\Ss(R)$, $\DD(\M)\cap \MCM$ is $\M$, the image of $\Ld$ lies in $\mathfrak{Z}$.  Furthermore, for any $\Z\in\mathfrak{Z}$, $\Z$ is in $\R(\Z\cap\MCM)$, thus the result follows from Proposition \ref{3.1} and Theorem \ref{main3}. 
\end{proof}
It is natural to ask when the image $\Ld$ is all of $\R$.  This will happen precisely when  every resolving subcategory of $\MCM$ is thick. This occurs, by \cite[Theorem 6.4]{DaoTakahashi13}, when $R$ is a complete intersection.  We will give a  necessary condition for $\im\Ld=\R$ by examining the resolving subcategories of the form
\[\M_\B=\{M\in\md(R)\mid \ext^{>0}(M,B)=0\quad \forall B\in\B\}\]
 where $\B\sbe\md(R)$. Dimension with respect to this category can be calculated in the following manner.
 \begin{lemma}\label{7.1}
 For all $\B\sbe \md(R)$, we have the following.  
 \[  \M_\B\hdim M=\inf\{n\mid \ext^{>n}(M,B)=0\quad\forall B\in\B\}\]
\end{lemma}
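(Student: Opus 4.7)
The plan is to reduce the statement to Corollary \ref{1.1} by a standard dimension-shifting argument. First I would note that, as indicated in Example \ref{example}(4), the category $\M_\B$ is resolving: it clearly contains $R$, is closed under direct summands, and closure under extensions and syzygies follows from the long exact sequence of $\ext(-,B)$ since the relevant $\ext$ groups vanish for modules in $\M_\B$.

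Since $\M_\B$ is resolving, Corollary \ref{1.1} gives
\[
\M_\B\hdim M=\inf\{n\ge 0\mid \zz^n M\in\M_\B\}.
\]
So it suffices to show that for every $n\ge 0$ and every $B\in\B$,
\[
\zz^n M\in\M_\B\quad\Longleftrightarrow\quad \ext^{>n}(M,B)=0.
\]

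The key step is the identification $\ext^{j}(\zz^n M,B)\cong \ext^{j+n}(M,B)$ for all $j\ge 1$, $n\ge 0$. This follows by induction on $n$ using the short exact sequences $0\to\zz^{i+1}M\to F_i\to \zz^i M\to 0$ with $F_i$ projective, applying $\hm(-,B)$ and reading the long exact $\ext$-sequence: since $\ext^{j}(F_i,B)=0$ for all $j\ge 1$, the connecting map yields $\ext^{j}(\zz^{i+1}M,B)\cong \ext^{j+1}(\zz^i M,B)$ for $j\ge 1$. Iterating gives the claim. Consequently $\zz^n M$ satisfies $\ext^{>0}(\zz^n M,B)=0$ for all $B\in\B$ exactly when $\ext^{>n}(M,B)=0$ for all $B\in\B$, i.e.\ $\zz^n M\in\M_\B$ iff $n$ is an upper bound on the vanishing threshold.

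Taking the infimum over such $n$ on both sides then yields the desired equality, including the case where the infimum is infinite (both sides equal $\infty$ when no such $n$ exists). There is no real obstacle here; the only mildly subtle point is to verify that $\M_\B$ is indeed resolving before invoking Corollary \ref{1.1}, and to be careful that the dimension-shifting isomorphism only holds in positive cohomological degree, which is exactly what is needed since $\M_\B$ is defined by vanishing in degrees $>0$.
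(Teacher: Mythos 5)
Your proof is correct and follows essentially the same route as the paper: dimension-shift via $\ext^{i+j}(M,B)\cong\ext^i(\zz^j M,B)$ to identify membership of $\zz^n M$ in $\M_\B$ with vanishing of $\ext^{>n}(M,B)$, then invoke Corollary \ref{1.1}. You are slightly more explicit about verifying that $\M_\B$ is resolving and about the range of degrees in the dimension-shifting isomorphism, but the argument is the same.
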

\begin{proof}
Let $M\in\md(R)$. For all $i>0$ and $j\ge 0$ and each $B\in\B$, we have $\ext^{i+j}(M,B)=\ext^i(\zz^j M,B)$.  So $\ext^{i+n}(M,B)=0$ for all $i\ge 0$ if and only if $\zz^n M$ is in $\M_\B$. 
\end{proof}
\begin{lemma}\label{motiv}
For any $\B\sbe\md(R)$, we have $\M_\B\cap \DD(\PP)=\PP$.  
\end{lemma}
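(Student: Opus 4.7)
The inclusion $\PP \sbe \M_\B \cap \DD(\PP)$ is immediate, since every projective module $P$ has projective dimension zero and satisfies $\ext^{>0}(P,-)=0$; all the content lies in the reverse containment.

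For the other direction the plan is to take $M \in \M_\B \cap \DD(\PP)$, set $n = \pd M$, and argue by contradiction that $n>0$ is impossible. Picking any nonzero $B\in\B$, the aim is to produce a nonzero element of $\ext^n(M,B)$, which directly contradicts $M\in\M_\B$. The standard tool is a minimal free resolution $F_\bullet\to M$; minimality ensures that $F_n\ne 0$ and that the differential $d_n\colon F_n\to F_{n-1}$ has matrix entries in $\mm$. Applying $\hm(-,B)$, the coboundary $\hm(F_{n-1},B)\to \hm(F_n,B)\cong B^{\rank F_n}$ then lands inside $\mm B^{\rank F_n}$, so $\ext^n(M,B)$ surjects onto $(B\otimes_R k)^{\rank F_n}$. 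Nakayama's lemma makes the right-hand side nonzero whenever $B$ is a nonzero finitely generated module, producing the required contradiction and forcing $\pd M=0$.

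The main subtlety is cosmetic rather than mathematical: if $\B$ is empty or consists only of the zero module then $\M_\B = \md(R)$ and the claim collapses to $\DD(\PP)=\PP$, which fails unless $R$ is regular. The intended reading must therefore be that $\B$ contains at least one nonzero module, and under that mild assumption the argument above completes the proof. It is worth noting that the full Gorenstein hypothesis of this section plays no role here; only the local Noetherian structure is used, via minimality of free resolutions together with Nakayama's lemma.
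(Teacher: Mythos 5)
Your proof is correct and follows essentially the same route as the paper: take a minimal free resolution, apply $\hm(-,B)$, observe that the image of the last coboundary map lies in $\mm\hm(F_n,B)$, and conclude via Nakayama that $\ext^n(M,B)\ne 0$. Your two side remarks (that the statement needs $\B$ to contain a nonzero module, and that only the local Noetherian hypothesis is used) are both accurate, though neither changes the substance of the argument.
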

\begin{proof}
To prove this, it suffices to show that if $\pd(X)=n>0$, then $\ext^n(X,B)\ne 0$.  Take a minimal free resolution
\[0\to F_n\xto{d} F_{n-1}\to\cdots\to F_0\to X\to 0.\]
Note that $\im(d)\sbe\mm F_{n-1}$.   We then get the complex
\[0\to\hm(X,B)\to\hm(F_0,B)\to\cdots\to\hm(F_{n-1},B)\xto{d^*}\hm(F_n,B)\to 0.\]
Now $\im(d^*)$ will still lie in $\mm\hm(F_n,B)$, and thus by Nakayama, $d^*$ cannot be surjective.  Hence we have $\ext^n(X,B)=\coker d^*\ne 0$.  
\end{proof}
Araya in  \cite{Tokuji12}  defined AB dimension by $\ABdim M=\max\{b_M,\G_R\hdim M\}$ where 
\[b_M=\min\{n\mid \ext^{\gg 0}(M,B)=0\Rightarrow \ext^{>n}(M,B)=0\}.\]  Note that AB dimension satisfies the Auslander Buchsbaum formula.  Also, a ring is AB if and only if every module has finite AB dimension.  
\begin{lemma}
Taking $\B\sbe\md(R)$,  if $\ABdim M<\infty$ for all $M\in\DD(\M_\B)$, then $\M_\B$ is a thick subcategory of \MCM.
\end{lemma}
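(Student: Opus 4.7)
The plan is to apply Lemma~\ref{dimension}: since $R$ is Cohen--Macaulay, $\M_\B$ will be thick in $\MCM$ precisely when the Auslander--Buchsbaum formula $\M_\B\hdim X + \depth X = \depth R$ holds for every $X \in \DD(\M_\B)$, so the entire task reduces to verifying this formula for $\M_\B\hdim$.

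First I would establish the upper bound $\M_\B\hdim X + \depth X \le \depth R$. Lemma~\ref{7.1} identifies $\M_\B\hdim X$ with the infimum of $n$ such that $\ext^{>n}(X,B) = 0$ for all $B \in \B$, and membership $X \in \DD(\M_\B)$ automatically gives $\ext^{\gg 0}(X,B) = 0$ for each $B \in \B$, so comparing with the definition of $b_X$ yields $\M_\B\hdim X \le b_X$. By the hypothesis $\ABdim X < \infty$ and the Auslander--Buchsbaum formula for AB dimension, $\ABdim X + \depth X = \depth R$; since $\G_R\hdim X$ is always finite over Gorenstein $R$ and satisfies the same formula, one has $\ABdim X = \G_R\hdim X$, and therefore $b_X \le \G_R\hdim X = \depth R - \depth X$. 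Concatenating the two inequalities proves the upper bound.

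For the reverse inequality $\M_\B\hdim X + \depth X \ge \depth R$, I would reduce to showing the containment $\M_\B \sbe \MCM$; given this, an $\M_\B$-resolution of length $n = \M_\B\hdim X$ of $X$ becomes a resolution by MCM modules, and the iterated depth lemma gives $\depth X \ge \depth R - n$ as required. To prove $\M_\B \sbe \MCM$, I would take $M \in \M_\B$ and form a maximal Cohen--Macaulay approximation $0 \to K \to Y \to M \to 0$ with $Y \in \MCM$ and $\pd K < \infty$. Long exact sequences of $\ext(-,B)$ for $B \in \B$ first place $Y$ in $\DD(\M_\B)$, and because $\G_R\hdim Y = 0$ the hypothesis together with the AB formula for AB dimension forces $b_Y = 0$, which yields $Y \in \M_\B$; another Ext chase then shows $K \in \M_\B$. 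Since $K$ has finite projective dimension, Lemma~\ref{motiv} forces $K \in \PP$, and combined with the structural identity $\pd K = \G_R\hdim M - 1$ for minimal MCM approximations, this confines $\G_R\hdim M$ to $\{0,1\}$.

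The main obstacle will be ruling out the residual case $\G_R\hdim M = 1$. The strategy is to dualize the approximation sequence into $R$ to obtain $0 \to \hm(M,R) \to \hm(Y,R) \to \hm(K,R) \to \ext^1(M,R) \to 0$ and to exploit both the Gorenstein self-duality of $\MCM$ and the rigidity of Ext-vanishing built into $\M_\B$-membership to force $\ext^1(M,R) = 0$, whence $\G_R\hdim M = 0$ and $M \in \MCM$. Once $\M_\B \sbe \MCM$ is secured, the lower bound of the Auslander--Buchsbaum formula follows from the depth lemma, and Lemma~\ref{dimension} delivers the thickness of $\M_\B$ in $\MCM$.
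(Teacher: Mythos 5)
Your overall strategy — reducing to the Auslander--Buchsbaum formula for $\M_\B\hdim$ via Lemma~\ref{dimension}, then splitting into an upper bound and a lower bound — is sound, and the upper bound chain $\M_\B\hdim X \le b_X \le \ABdim X = \depth R - \depth X$ is correct. The lower bound correctly reduces to proving the containment $\M_\B \sbe \MCM$.

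The gap is in that containment. You take the MCM \emph{cover} $0 \to K \to Y \to M \to 0$ with $Y \in \MCM$ and $\pd K < \infty$, and you correctly get $Y \in \DD(\M_\B)$, then $\ABdim Y = 0$ (AB formula plus $Y \in \MCM$), then $Y \in \M_\B$, then $K \in \M_\B$ (resolving), then $K$ projective via Lemma~\ref{motiv}. But a projective $K$ only gives $\G_R\hdim M \le 1$: the module $M$ is presented as a quotient of an MCM module by a free one, and such quotients need not be MCM (think of $0 \to R \xto{x} R \to R/(x) \to 0$). You flag the residual case $\G_R\hdim M = 1$ yourself, but the proposed fix — dualize the sequence into $R$ and invoke ``rigidity of Ext-vanishing built into $\M_\B$-membership'' — is not a concrete argument. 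Membership in $\M_\B$ controls $\ext^{>0}(M,B)$ only for $B \in \B$, and $R$ need not belong to $\B$, nor is $\M_\B$ closed under $R$-duals, so nothing forces $\ext^1(M,R)=0$ from that data.

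The paper avoids the residual case by using the \emph{other} Auslander--Buchweitz approximation, the finite-projective-dimension hull $0 \to M \to Y \to X \to 0$ with $\pd Y < \infty$ and $X \in \MCM$. Running your same AB-dimension-zero argument on the MCM term $X$ gives $X \in \M_\B$; extension-closure then gives $Y \in \M_\B$; Lemma~\ref{motiv} forces $Y$ projective; and now $M$ sits as the \emph{kernel} of a surjection from a free module to an MCM module, so $M \in \MCM$ outright by the resolving property of $\MCM$. If you swap the cover for the hull, the rest of your AB-formula framework goes through and yields a valid (and slightly cleaner) proof than the paper's direct verification of the two defining properties of a thick subcategory of $\MCM$.
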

\begin{proof}
Suppose $\ABdim \DD(\M_\B)<\infty$.  First, we show that $\M_\B$ is contained in \MCM.  Take any $M\in\M_\B$.    There is an exact sequence $0\to M\to Y\to X\to 0$ with $\pd(Y)<\infty$ and $X\in \MCM$.  We claim that $X$ has AB dimension zero.  Suppose $\ext^{\gg 0}(X,Z)=0$.  Then $\ext^{\gg 0}(Y,Z)=0$ and since $\pd Y=\ABdim Y$, $\ext^{>\pd Y}(Y,Z)$ is zero.  Then we have $\ext^{\gg 0}(M,Z)=0$ and thus $\ext^{>b_M}(M,Z)=0$.   Therefore $\ext^i(X,Z)=0$ for all $i>\max\{\pd(Y),b_M\}+1$.  Since $R$ is Gorenstein, that means that $X$ has finite $\G_R$ dimension, and thus $X$ has finite AB dimension.  But since AB dimension satisfies the Auslander Buchsbaum formula, $\ABdim X$ must be zero.

Since $Y\in\DD(\M_\B)$, we have $X\in\DD(\M_B)$. So $\ext^{\gg 0}(X,B)=0$ for all $B\in\B$, and we have $\ext^{>0}(X,B)=0$ for all $B\in\B$. Hence $X$ is in $\M_\B$.  Therefore, $Y$ is also in $\M_\B$, which, by Lemma \ref{motiv}, means that $Y$ is projective and hence in \MCM, forcing $M$ to be in $\MCM$ as well.  

 Now to show that $\M_\B$ is thick in \MCM, it suffices to show that $\M_\B$ is closed under cokernels of surjections in \MCM.  So take $0\to L\to M\to N\to 0$ with $L,M,N\in \MCM$ and $L,M\in\M_\B$.  Then $N\in \DD(\M_\B)$ and so $\ext^{\gg 0}(N,B)=0$ for all $B\in\B$.  But then $N$ has finite AB dimension by assumption.  Since AB dimension satisfies the Auslander Buchsbaum formula,  $\ABdim N$ is zero.  So we have $\ext^{>0}(N,B)=0$ for all $B\in\B$, and hence, $N$ is in $\M_\B$.  

\end{proof}

Now let $d=\dim R$.  
\begin{theorem}\label{tea}
If $R$ is Gorenstein, then the following are equivalent.
\begin{enumerate}
\item\label{5} R is AB
\item\label{1} $\M_\B$ is a thick subcategory of $\MCM$ for all $\B\sbe\md(R)$
\item\label{2} $\MCM\cap\M_\B$ is thick in $\MCM$  for every $\B\sbe\md(R)$
\item\label{3} $\Ld_{\M_\B}$ gives a bijection between $\R(\M_\B)$ and $\Ga$ for every $\B\sbe\md(R)$
\item\label{4} For all $\B\sbe \md(R)$ and $M\in\M_\B$, $\Ga$ contains the function $f:\spec R\to \NN$ defined by the following: \[f(p)=\min\{n\mid \ext^{>n}(M_p,B_p)=0\quad\forall B\in\B\}\]
\end{enumerate}   
\end{theorem}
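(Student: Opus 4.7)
The plan is to establish $(5)\Rightarrow(1)\Leftrightarrow(2)$, $(1)\Leftrightarrow(3)$, and $(1)\Rightarrow(4)\Rightarrow(5)$, which closes the equivalence loop. The direction $(5)\Rightarrow(1)$ is immediate from the lemma immediately preceding the theorem: if $R$ is AB, every module has finite AB-dimension, so in particular every element of $\DD(\M_\B)$ does, whence $\M_\B$ is thick in $\MCM$.

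For $(1)\Leftrightarrow(2)$, the forward direction is trivial because (1) forces $\M_\B\sbe\MCM$, so $\MCM\cap\M_\B=\M_\B$. The reverse $(2)\Rightarrow(1)$ is the main technical step; it suffices to show $\M_\B\sbe\MCM$. Given $M\in\M_\B$, I would take the approximation $0\to M\to Y\to X'\to 0$ from Theorem \ref{hull}, with $\pd Y=e<\infty$ and $X'\in\MCM$ (applicable because $\PP$ cogenerates $\MCM=\G_R$ via complete resolutions over Gorenstein $R$). The $\ext(-,B)$ long exact sequence, combined with $\ext^{>0}(M,B)=0$ and $\ext^{>e}(Y,B)=0$, forces $\ext^{>e}(X',B)=0$, so $\zz^e X'\in\MCM\cap\M_\B$. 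Assumption (2) makes $\MCM\cap\M_\B$ closed under cosyzygies in $\MCM$, via the short exact sequences $0\to\zz^{i+1}X'\to F_i\to\zz^i X'\to 0$ with $F_i$ free; iterating $e$ times yields $X'\in\MCM\cap\M_\B$. Plugging back into the LES forces $\ext^{>0}(Y,B)=0$, and Lemma \ref{motiv} then gives $Y\in\PP$. Hence $M=\zz X'$ is the syzygy of an MCM module, so $M\in\MCM$.

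For $(1)\Leftrightarrow(3)$: since $R$ is Gorenstein, $\G_R=\MCM$, so $\Ss(R)$ is exactly the set of thick subcategories of $\MCM$, and Theorem \ref{main3} supplies the bijection $\Ld_{\M_\B}$ precisely when $\M_\B\in\Ss(R)$, i.e., when (1) holds. For $(1)\Rightarrow(4)$, the same theorem gives $\Phi_{\M_\B}(\X)\in\Ga$ for every $\X\in\R(\M_\B)$; applying this to $\X=\res(\M_\B\cup\{M\})$ and using Lemma \ref{jn} places the function $p\mapsto\add\M_{\B,p}\hdim M_p$ in $\Ga$, and a direct localization computation shows the $f$ in (4) is bounded above by this function and is itself increasing, whence $f\in\Ga$. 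For $(4)\Rightarrow(5)$, take any $M\in\md(R)$ and set $\B=\{B\in\md(R):\ext^{\gg 0}(M,B)=0\}$; the corresponding $f$ at the maximal ideal equals $b_M$, so $f\in\Ga$ forces $b_M<\infty$, giving $\ABdim M<\infty$ and hence $R$ is AB.

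The main obstacle will be the cosyzygy step in $(2)\Rightarrow(1)$: one must carefully chase the Ext long exact sequence through the MCM approximation and then iterate cosyzygies in $\MCM$ while preserving $\M_\B$-membership via thickness. A secondary subtlety is the interpretation of (4), which as literally stated with $M\in\M_\B$ makes $f$ identically $0$ and the condition vacuous; I take the natural non-trivial reading where $M$ ranges over $\md(R)$ and $f\in\Ga$ entails in particular $f(p)<\infty$ for every $p\in\spec R$.
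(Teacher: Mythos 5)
Your proof is essentially correct and covers the same territory as the paper, but the route differs in two places worth flagging.

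First, for the implication involving (2), the paper goes $(2)\Rightarrow(5)$ directly: given $\ext^{\gg0}(M,B)=0$, it observes $\zz^n M\in\M_\B\cap\MCM$ for $n\gg 0$, and then the free resolution $0\to\zz^n M\to F_{n-1}\to\cdots\to F_d\to\zz^d M\to 0$ together with thickness of $\MCM\cap\M_\B$ in $\MCM$ forces $\zz^d M\in\M_\B$, giving $\ext^{>d}(M,B)=0$. You instead prove $(2)\Rightarrow(1)$ via the Auslander--Buchweitz approximation $0\to M\to Y\to X'\to 0$ and Lemma \ref{motiv}. Your argument is a valid alternative and essentially recycles the technique from the lemma immediately preceding the theorem; it does the job, though one should be a little careful when $\pd Y=0$ (the syzygy index to start the descent is $\max\{e,1\}$, not $e$). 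Either way, the logical loop closes.

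Second, and more substantively, your $(4)\Rightarrow(5)$ step has a subtle circularity. You choose $\B=\{B\mid\ext^{\gg0}(M,B)=0\}$ for a fixed $M$, but then $M\in\DD(\M_\B)$ (equivalently, $f$ finite-valued) is \emph{exactly} the statement $b_M<\infty$ that you are trying to prove, so you cannot invoke (4) without already having it. The paper sidesteps this by taking the singleton $\B=\{B\}$: for each individual $B$ with $\ext^{\gg0}(M,B)=0$, the hypothesis $M\in\DD(\M_{\{B\}})$ is automatic, and (4) gives $f(\mm)\le\grade(\mm)=\dim R$ uniformly over such $B$, whence $b_M\le\dim R$. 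That is the intended reading. Related to this, you are right that the statement of (4) with ``$M\in\M_\B$'' is a typo (it makes $f\equiv 0$ and the condition vacuous); the correct hypothesis is $M\in\DD(\M_\B)$, i.e., $\ext^{\gg0}(M,B)=0$ for all $B\in\B$, which is exactly what both $(3)\Rightarrow(4)$ (via Lemma \ref{7.1} and the containment $f\le\Phi_{\M_\B}(\res(\M_\B\cup\{M\}))$) and $(4)\Rightarrow(5)$ need. Finally, your phrase ``precisely when'' in the $(1)\Leftrightarrow(3)$ step overstates Theorem \ref{main3}, which only gives one implication; but this is harmless since the same argument you use for $(1)\Rightarrow(4)$ works verbatim from hypothesis (3), so $(3)\Rightarrow(4)\Rightarrow(5)\Rightarrow(1)$ closes that part of the loop exactly as the paper does.
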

\begin{proof}
The previous lemma shows that (\ref{5}) implies (\ref{1}), and (\ref{1}) implies (\ref{2}) is trivial.  Assuming (\ref{2}), we will show (\ref{5}).  Suppose $\ext^{\gg 0}(M,B)=0$. Then $M$ is in $\DD(\M_B)$.  Letting $\dim R=d$, we have $\zz^d M\in\DD(\M_B)\cap \MCM$.  For some $n\ge d$ we have $\zz^n M\in\M_\B\cap \MCM$.  But then we have
\[0\to \zz^n M\to F_{n-1}\to\cdots\to F_d\to \zz^d M\to 0\]
where each $F_i$ is projective. By \ref{2}, $\zz^d M$ is in $\M_\B$.  So we have $\hdim_{C_B} M\le d$, and so $\ext^{>d}(M,B)=0$.

Theorem \ref{engine} shows that (\ref{1}) implies (\ref{3}). Lemma  \ref{7.1} shows that (\ref{3}) implies (\ref{4}).  Since $R$ is local, evaluating  $f$ at the maximal ideal shows that (\ref{4}) implies (\ref{5}).
\end{proof}  
\begin{corollary}
Set $r=d-depth M$.  If $R$ is AB and $\ext^{\gg 0}(M,B)=0$, then $\ext^r(M,B)\ne 0$.  Furthermore, if $\ext^{r}(M,B)=0$ or $\ext^i(M,B)\ne 0$ for $i>r$, then $\ext^j(M,B)\ne 0$ for arbitrarily large $j$.  
\end{corollary}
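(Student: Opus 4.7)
The plan is to deduce this corollary by combining Theorem \ref{tea} with the Auslander--Buchsbaum formulas provided by Lemma \ref{dimension}. The first step is to identify the number $r$: since $R$ is Gorenstein, $\MCM = \G_R$ and $\MCM\hdim$ satisfies Auslander--Buchsbaum on all of $\DD(\MCM) = \md(R)$, so $\MCM\hdim M = d - \depth M = r$. This reading of $r$ as an $\MCM$-resolution length is what makes the whole argument work.

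Next I would invoke the AB hypothesis. By Theorem \ref{tea}, taking $\B = \{B\}$, the resolving subcategory $\M_{\{B\}} = \{X \mid \ext^{>0}(X,B) = 0\}$ is thick in $\MCM$. The hypothesis $\ext^{\gg 0}(M,B) = 0$ combined with Lemma \ref{7.1} puts $M$ into $\DD(\M_{\{B\}})$. Since $\M_{\{B\}}$ is thick in $\MCM$, Lemma \ref{dimension} forces
\[
\M_{\{B\}}\hdim M \;=\; \MCM\hdim M \;=\; r.
\]

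Now I would feed this back into Lemma \ref{7.1}, which states $\M_{\{B\}}\hdim M = \inf\{n \mid \ext^{>n}(M,B) = 0\}$. Equality with $r$ means on the one hand that $\ext^{>r}(M,B) = 0$, and on the other that no smaller integer works; taking $n = r-1$ (when $r \ge 1$) gives $\ext^r(M,B) \ne 0$, proving the first statement. The second statement is then immediate by contrapositive: if $\ext^{\gg 0}(M,B) = 0$ the first statement would produce both $\ext^r(M,B) \ne 0$ and $\ext^{>r}(M,B) = 0$, so if either of those fails we cannot have Ext vanishing eventually, i.e.\ $\ext^j(M,B) \ne 0$ for arbitrarily large $j$.

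There is essentially no obstacle here---the entire argument is a matter of routing the data $(R \text{ AB}, \ext^{\gg 0}(M,B) = 0)$ through Theorem \ref{tea} and the two Auslander--Buchsbaum identifications. The only subtle point to watch is that Lemma \ref{7.1} expresses $\M_{\{B\}}\hdim M$ as an infimum, so one has to verify it is actually attained at $r$; this is precisely where saying ``$r$ works but $r-1$ does not'' gives $\ext^r(M,B) \ne 0$. The degenerate case $r = 0$ (which corresponds to $M$ being $\MCM$ and already in $\M_{\{B\}}$) is not an essential issue since the second half of the corollary, which is what is used in practice, is purely a contrapositive restatement.
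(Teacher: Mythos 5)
Your proof is correct and takes essentially the same route as the paper: invoke Theorem \ref{tea} to get that $\M_{\{B\}}$ is thick in $\MCM$, apply Lemma \ref{dimension} to obtain the Auslander--Buchsbaum formula for $\M_{\{B\}}\hdim$, translate $\M_{\{B\}}\hdim M = r$ into $\ext$-vanishing via Lemma \ref{7.1}, and obtain the second claim by contrapositive. The only cosmetic difference is that you phrase the Lemma \ref{dimension} step as equality of dimensions $\M_{\{B\}}\hdim M = \MCM\hdim M$ rather than directly stating that $\M_{\{B\}}\hdim$ satisfies Auslander--Buchsbaum, which are the two equivalent formulations given in that lemma.
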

\begin{proof}
Suppose $R$ is AB.  Then (\ref{1}) holds and so $\M_B\hdim$ satisfies the Auslander Buchsbaum formula. If $\ext^{\gg 0}(M,B)=0$ then $r=\M_B\hdim M=\max\{n\mid \ext^n(M,B)\ne 0\}$.  The second statement is just the contrapositive of the first statement.  
\end{proof}
\begin{corollary}
If $R$ is Gorenstein and every resolving subcategory of $\MCM$ is thick, then $R$ is AB. 
\end{corollary}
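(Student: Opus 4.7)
The plan is to reduce directly to Theorem \ref{tea}, specifically the implication $(\ref{2}) \Rightarrow (\ref{5})$. The corollary assumes that every resolving subcategory of $\MCM$ is thick in $\MCM$. To conclude that $R$ is AB, it suffices to verify condition $(\ref{2})$ of Theorem \ref{tea}, namely that $\MCM \cap \M_\B$ is thick in $\MCM$ for every $\B \sbe \md(R)$.

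First I would observe that $\M_\B$ is a resolving subcategory of $\md(R)$ by Example \ref{example}(4). Since $R$ is Gorenstein, $\MCM = \G_R$ is itself resolving, and in fact syzygies of maximal Cohen--Macaulay modules are maximal Cohen--Macaulay. Therefore $\M_\B \cap \MCM$ is an intersection of two resolving subcategories, hence resolving, and it is contained in $\MCM$.

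By the hypothesis of the corollary, every resolving subcategory of $\MCM$ is thick in $\MCM$, so in particular $\M_\B \cap \MCM$ is thick in $\MCM$. As this holds for every $\B \sbe \md(R)$, condition $(\ref{2})$ of Theorem \ref{tea} is satisfied. Applying $(\ref{2}) \Rightarrow (\ref{5})$ of Theorem \ref{tea} gives that $R$ is AB, completing the proof.

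There is no real obstacle; the main point is simply recognizing that the corollary is a direct consequence of Theorem \ref{tea} once one checks that $\M_\B \cap \MCM$ is a resolving subcategory of $\MCM$, which uses only the fact that syzygies over a Gorenstein ring preserve maximal Cohen--Macaulayness.
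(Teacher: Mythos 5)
Your proof is correct and takes essentially the same approach as the paper: both reduce directly to Theorem \ref{tea}. The only difference is that you verify condition (\ref{2}) whereas the paper invokes condition (\ref{1}); your choice is arguably the cleaner one, since $\M_\B$ need not a priori be contained in $\MCM$, whereas $\M_\B \cap \MCM$ is automatically a resolving subcategory contained in $\MCM$ and thus directly covered by the hypothesis. Since Theorem \ref{tea} establishes the equivalence of (\ref{1})--(\ref{5}), either entry point yields the conclusion.
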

\begin{proof}
The assumption implies (\ref{1}) in Theorem \ref{tea}.
\end{proof}

Thus if $\Ld$ in Theorem \ref{main3} is a bijection from $\Ss(R)\x \Ga$ to $\R$, then $R$ is AB. In \cite{Stevenson13}, Stevenson shows that when $R$ is a complete intersection, every resolving subcategory of $\MCM$ is closed under duals. The following gives  a necessary condition for this property.
\begin{corollary}
If $R$ is Gorenstein and every resolving subcategory of $\MCM$ is closed under duals, then $R$ is AB.  
\end{corollary}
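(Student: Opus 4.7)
The plan is to deduce this result from the immediately preceding corollary, which asserts that if $R$ is Gorenstein and every resolving subcategory of $\MCM$ is thick (in $\MCM$), then $R$ is AB. Thus it suffices to prove the implication: if every resolving subcategory of $\MCM$ is closed under the dual $\dagger = \hm(-, R)$, then every resolving subcategory of $\MCM$ is thick in $\MCM$.

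To carry out this reduction I would apply Lemma \ref{6.1} in the case $C = R$. Since $R$ is Gorenstein, $\G_R = \MCM$ and $R \in \MCM$, so Lemma \ref{6.1} says that a resolving subcategory $\X \subseteq \MCM$ is thick in $\MCM$ if and only if $(\zz M^\da)^\da \in \X$ for every $M \in \X$. Now suppose $\X$ is closed under $\da$. For any $M \in \X$, the dual $M^\da$ lies in $\X$; since $\X$ is resolving, it is closed under syzygies, so $\zz M^\da \in \X$; applying $\da$ again and using closure under duals, $(\zz M^\da)^\da \in \X$. Hence $\X$ is thick in $\MCM$ by Lemma \ref{6.1}.

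Combining this with the previous corollary immediately yields the conclusion that $R$ is AB. The main conceptual step is the observation that closure under duality plus closure under syzygies automatically gives closure under the cosyzygy construction $(\zz M^\da)^\da$ appearing in Lemma \ref{6.1}, which in the Gorenstein setting is exactly what distinguishes thick subcategories of $\MCM$ among the resolving ones; there is no significant obstacle once this is noticed, since all the heavy lifting (the characterization of thickness via cosyzygies, and the AB-criterion via thickness of resolving subcategories of $\MCM$) has already been done in the paper.
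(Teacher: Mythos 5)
Your proposal is correct and is essentially identical to the paper's own argument: the paper likewise applies Lemma \ref{6.1} with $C=R$ (so $\G_R=\MCM$), observes that closure under duals and syzygies yields $(\zz M^\da)^\da\in\M$, concludes $\M$ is thick, and then invokes the preceding corollary to deduce that $R$ is AB. Your write-up just spells out the two intermediate steps (dual, then syzygy, then dual) that the paper leaves implicit.
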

\begin{proof}
Suppose every resolving subcategory of $\MCM$ is closed under duals. Let $\M\sbe \MCM$ be resolving. Let $-^*=\hom(-,R)$.  Then for every $M\in\X$, $(\zz M^*)^*$ is in $\M$.   By Lemma \ref{6.1}, $\M$ is thick.  The result follows from the previous corollary.
\end{proof}

\section*{Acknowledgements}
The author would like to thank his advisor, Hailong Dao, for his guidance, and also Ryo Takahashi for his insightful comments.  He would also like to thank the referee whose suggestions greatly improved this article.


%


\bibliographystyle{amsplain}
\bibliography{Bibliography}

\providecommand{\bysame}{\leavevmode\hbox to3em{\hrulefill}\thinspace}
\providecommand{\MR}{\relax\ifhmode\unskip\space\fi MR }
\providecommand{\MRhref}[2]{%
  \href{http://www.ams.org/mathscinet-getitem?mr=#1}{#2}
}
\providecommand{\href}[2]{#2}
\begin{thebibliography}{10}

\bibitem{Tokuji12}
T.~Araya, \emph{A homological dimension related to ab rings},  (2012),
  arxiv:1204.4513v1.

\bibitem{AuslanderBridger69}
M.~Auslander and M.~Bridger, \emph{Stable module theory}, Memoirs of the
  American Mathematical Society, No. 94, American Mathematical Society,
  Providence, R.I., 1969. \MR{0269685 (42 \#4580)}

\bibitem{AuslanderReiten91}
M.~Auslander and I.~Reiten, \emph{Applications of contravariantly finite
  subcategories}, Adv. Math. \textbf{86} (1991), no.~1, 111--152. \MR{1097029
  (92e:16009)}

\bibitem{Auslander71}
Maurice Auslander, \emph{Representation theory of artin algebras}, Lecture
  Notes, Queen Mary College (1971).

\bibitem{AuslanderBuchweitz89}
R.-O. Auslander, M.~Buchweitz, \emph{The homological theory of maximal
  {C}ohen-{M}acaulay approximations}, M\'em. Soc. Math. France (N.S.) (1989),
  no.~38, 5--37, Colloque en l'honneur de Pierre Samuel (Orsay, 1987).
  \MR{1044344 (91h:13010)}

\bibitem{DaoTakahashi12}
Hailong Dao and Ryo Takahashi, \emph{The radius of a subcategory of modules},
  Algebra Number Theory \textbf{8} (2014), no.~1, 141--172. \MR{3207581}

\bibitem{DaoTakahashi13}
\bysame, \emph{Classification of resolving subcategories and grade consistent
  functions}, Int. Math. Res. Not. IMRN (2015), no.~1, 119--149. \MR{3340297}

\bibitem{Foxby72}
H.-B. Foxby, \emph{Gorenstein modules and related modules}, Math. Scand.
  \textbf{31} (1972), 267--284 (1973). \MR{0327752 (48 \#6094)}

\bibitem{Gabriel62}
P.~Gabriel, \emph{Des cat\'egories ab\'eliennes}, Bull. Soc. Math. France
  \textbf{90} (1962), 323--448. \MR{0232821 (38 \#1144)}

\bibitem{Geng13}
Y.~Geng, \emph{A generalization of the {A}uslander transpose and the
  generalized {G}orenstein dimension}, Czechoslovak Math. J. \textbf{63(138)}
  (2013), no.~1, 143--156. \MR{3035502}

\bibitem{Gerko01}
A.~A. Gerko, \emph{On homological dimensions}, Mat. Sb. \textbf{192} (2001),
  no.~8, 79--94. \MR{1862245 (2002h:13024)}

\bibitem{Hopkins87}
M.~J. Hopkins, \emph{Global methods in homotopy theory}, Homotopy theory
  ({D}urham, 1985), London Math. Soc. Lecture Note Ser., vol. 117, Cambridge
  Univ. Press, Cambridge, 1987, pp.~73--96. \MR{932260 (89g:55022)}

\bibitem{Huang99}
Z.~Huang, \emph{On a generalization of the {A}uslander-{B}ridger transpose},
  Comm. Algebra \textbf{27} (1999), no.~12, 5791--5812. \MR{1726277
  (2000m:16023)}

\bibitem{HugelPospisiletal12}
Lidia~Angeleri H{\"u}gel, David Posp{\'{\i}}{\v{s}}il, Jan
  {\v{S}}{\v{t}}ov{\'{\i}}{\v{c}}ek, and Jan Trlifaj, \emph{Tilting, cotilting,
  and spectra of commutative {N}oetherian rings}, Trans. Amer. Math. Soc.
  \textbf{366} (2014), no.~7, 3487--3517. \MR{3192604}

\bibitem{HugelSaorin14}
Lidia~Angeleri H{\"u}gel and Manuel Saor{\'{\i}}n, \emph{t-{S}tructures and
  cotilting modules over commutative noetherian rings}, Math. Z. \textbf{277}
  (2014), no.~3-4, 847--866. \MR{3229968}

\bibitem{Masek99}
V.~Masek, \emph{Gorenstein dimension of modules},  (1999), arxiv:9809121v2.

\bibitem{Neeman92}
A.~Neeman, \emph{The chromatic tower for {$D(R)$}}, Topology \textbf{31}
  (1992), no.~3, 519--532, With an appendix by Marcel B{\"o}kstedt. \MR{1174255
  (93h:18018)}

\bibitem{Sather-Wagstaff09b}
S.~Sather-Wagstaff, \emph{Semidualizing modules},  (2009),
  http://people.clemson.edu/~ssather/DOCS/sdm.pdf.

\bibitem{Stevenson13}
Greg Stevenson, \emph{Duality for bounded derived categories of complete
  intersections}, Bull. Lond. Math. Soc. \textbf{46} (2014), no.~2, 245--257.
  \MR{3194744}

\bibitem{Stevenson12}
\bysame, \emph{Subcategories of singularity categories via tensor actions},
  Compos. Math. \textbf{150} (2014), no.~2, 229--272. \MR{3177268}

\bibitem{Takahashi09}
R.~Takahashi, \emph{Modules in resolving subcategories which are free on the
  punctured spectrum}, Pacific J. Math. \textbf{241} (2009), no.~2, 347--367.
  \MR{2507582 (2010b:13027)}

\bibitem{Takahashi10}
\bysame, \emph{Classifying thick subcategories of the stable category of
  {C}ohen-{M}acaulay modules}, Adv. Math. \textbf{225} (2010), no.~4,
  2076--2116. \MR{2680200 (2011h:13014)}

\bibitem{Takahashi11}
\bysame, \emph{Contravariantly finite resolving subcategories over commutative
  rings}, Amer. J. Math. \textbf{133} (2011), no.~2, 417--436. \MR{2797352
  (2012h:13027)}

\bibitem{Takahashi13}
\bysame, \emph{Classifying resolving subcategories over a {C}ohen-{M}acaulay
  local ring}, Math. Z. \textbf{273} (2013), no.~1-2, 569--587. \MR{3010176}

\bibitem{Vasconcelos74}
W.~V. Vasconcelos, \emph{Divisor theory in module categories}, North-Holland
  Publishing Co., Amsterdam, 1974, North-Holland Mathematics Studies, No. 14,
  Notas de Matem{\'a}tica No. 53. [Notes on Mathematics, No. 53]. \MR{0498530
  (58 \#16637)}

\bibitem{White10}
D.~White, \emph{Gorenstein projective dimension with respect to a semidualizing
  module}, J. Commut. Algebra \textbf{2} (2010), no.~1, 111--137. \MR{2607104
  (2011d:13013)}

\bibitem{Yoshino05}
Y.~Yoshino, \emph{A functorial approach to modules of {G}-dimension zero},
  Illinois J. Math. \textbf{49} (2005), no.~2, 345--367 (electronic).
  \MR{2163939 (2006e:13014)}

\end{thebibliography}


\end{document}